\documentclass[12pt]{amsart}
\usepackage[margin=1in,letterpaper,portrait]{geometry}
\usepackage{ytableau}
\ytableausetup{notabloids}
\ytableausetup{centertableaux}
\usepackage{latexsym,amsmath,amssymb,verbatim, tikz}
\usepackage{graphicx}
\usepackage[colorlinks]{hyperref}
\usepackage{mathrsfs}
\usepackage{amsfonts}
\usepackage{amsthm,youngtab}
\usepackage{graphicx}
\usepackage{caption}
\usepackage{enumerate}
\usepackage{tikz-cd}
\usepackage{rotating}
\usepackage[all]{xy}
\usepackage[titletoc, title]{appendix}
\usepackage{amscd}
\usepackage{float}
\hypersetup{
bookmarksnumbered,
pdfstartview={FitH},
breaklinks=true,
linkcolor=blue,
urlcolor=blue,
citecolor=blue,
bookmarksdepth=2
}	
\usepackage{caption}
\usepackage[utf8]{inputenc}
\usepackage[T1]{fontenc}
\usepackage{adjustbox}

 \usetikzlibrary{fit,backgrounds}

\begin{document}

\theoremstyle{plain}
   \newtheorem{thm}{Theorem}[section]
   \newtheorem{prop}[thm]{Proposition}
   \newtheorem{lem}[thm]{Lemma}
   \newtheorem{cor}[thm]{Corollary}
   \newtheorem{conj}[thm]{Conjecture}
   \newtheorem{problem}[thm]{Problem}
\theoremstyle{definition}
   \newtheorem{deftn}[thm]{Definition}
   \newtheorem{example}[thm]{Example}
   \newtheorem{examples}[thm]{Examples}
   \newtheorem{question}[thm]{Question}
   \newtheorem{algorithm}[thm]{Algorithm}
   \newtheorem{rk}[thm]{Remark}
   \newtheorem{obs}[thm]{Observation}

\numberwithin{equation}{section}

\newcommand\comp[2]{\alpha{(#2,#1)}}
\newcommand\ccomp[2]{\alpha^\cyc{(#2,#1)}}

\newcommand{\CC}{{\mathbb {C}}}
\newcommand{\QQ}{{\mathbb {Q}}}
\newcommand{\RR}{{\mathbb {R}}}
\newcommand{\ZZ}{{\mathbb {Z}}}
\newcommand{\NN}{{\mathbb {N}}}

\newcommand{\EE}{{\mathcal{E}}}
\newcommand{\PP}{{\mathcal {P}}}

\newcommand{\Cyl}{\operatorname{Cyl}}
\newcommand{\Tor}{\operatorname{Tor}}

\newcommand{\then}{\Rightarrow}

\newcommand{\Aut}{{\operatorname{Aut}}}
\newcommand{\Des}{{\operatorname{Des}}}
\newcommand{\cDes}{{\operatorname{cDes}}}
\newcommand{\cyc}{{\operatorname{cyc}}}
\newcommand{\cdes}{{\operatorname{cdes}}}
\newcommand{\des}{{\operatorname{des}}}
\newcommand{\ch}{{\operatorname{ch}}}
\newcommand{\shape}{{\operatorname{shape}}}
\newcommand{\SYT}{{\operatorname{SYT}}}
\newcommand{\co}{{\operatorname{co_n}}}
\newcommand{\cc}{{\operatorname{cc}_n}}
\newcommand{\SSYT}{{\rm SSYT}}
\newcommand{\tchi}{{\widetilde{\chi}}}
\newcommand{\td}{{\widetilde{D}}}
\newcommand{\bd}{{\bar{D}}}
\newcommand{\klr}{{\rm KLR}}

\newcommand{\Comp}{{\operatorname{Comp}}}
\newcommand{\cComp}{{\operatorname{cComp}}}
\newcommand{\ccon}{{\operatorname{cyc-comp}(n)}}

\newcommand{\one}{{\mathbf{1}}}

\newcommand{\cs}{{\tilde{s}}}

\newcommand{\xx}{{\mathbf{x}}}
\newcommand{\ttt}{{\mathbf{t}}}
\newcommand{\symm}{{\mathfrak{S}}}

\def\AA{\mathbb{A}}

\newcommand{\AAA}{{\mathcal{A}}}
\newcommand{\EEE}{{\mathcal{E}}}
\newcommand{\CCC}{{\mathcal{C}}}
\newcommand{\LLL}{{\mathcal{L}}}
\newcommand{\MMM}{{\mathcal{M}}}
\newcommand{\OOO}{{\mathcal{O}}}
\newcommand{\TTT}{{\mathcal{T}}}
\newcommand{\shihao}{\fontsize{3.25pt}{\baselineskip}\selectfont}

\newcommand{\stirling}[2]{\left\{\begin{matrix}{#1}\\ {#2}\end{matrix}\right\}}

\DeclareRobustCommand{\stirlingI}{\genfrac \langle \rangle {0pt}{}}
\newcommand{\GaussBinomial}[2]{\left[\begin{matrix}{#1}\\ {#2}\end{matrix}\right]}

\newcommand{\SteinbergTorus}{{\widetilde{\bdelta}}}

\newcommand{\wcDes}{\cDes_*}
\newcommand{\wcdes}{\cdes_*}

\newcommand{\AffDes}{{\operatorname{cDes}}}

\newcommand{\udots}{\reflectbox{$\ddots$}}

\newcommand{\bT}{{\mathbf T}}
\newcommand{\bA}{{\mathbf A}}
\newcommand{\tensor}{\otimes}
\newcommand{\bPsi}{\boldsymbol\Psi}
\newcommand{\kk}{\Bbbk}
\newcommand{\lie}[1]{{\mathfrak{#1}}}

\newcommand\scalemath[2]{\scalebox{#1}{\mbox{\ensuremath{\displaystyle #2}}}}

\newlength{\mysizetiny}
\setlength{\mysizetiny}{0.3em}
\newlength{\mysizesmall}
\setlength{\mysizesmall}{0.8em}
\newlength{\mysize}
\setlength{\mysize}{1.3em}
\newlength{\mysizelarge}
\setlength{\mysizelarge}{2em}

\newcommand{\IZ}{\widehat{I}_{\mathbb{Z}}}
\newcommand{\Ixi}{\widehat{I}_{\leq \xi}}
\newcommand{\YZ}{\mathcal{Y}_{\mathbb{Z}}}
\newcommand{\tdxi}{\widetilde{D}_{\xi}}
\newcommand{\barD}{\overline{D}}
\newcommand{\barDstar}{\overline{D}^{*}}
\newcommand{\xik}{\tau^{-1}x_k^{\mathbf{i}}}
\newcommand{\xkop}{\tau^{-1}x_k^{op}}
\newcommand{\betakop}{\beta_k^{op}}
\newcommand{\CN}{\mathbf{k}[\mathbf{N}]}
\newcommand{\si}{\mathcal{S}^{\mathbf{i}}}
\newcommand{\ihat}{\widehat{\mathbf{i}}}
\newcommand{\HQ}{\mathcal{H}_Q}
\newcommand{\AQ}{\mathcal{A}_Q}
\newcommand{\MQ}{\mathcal{M}_Q}
\newcommand{\tMQ}{\widetilde{\mathcal{M}_Q}}
\newcommand{\ttMQ}{\widetilde{\widetilde{\mathcal{M}_Q}}}
\newcommand{\DQ}{\mathcal{D}_Q}
\newcommand{\IQ}{\mathcal{I}_Q}
\newcommand{\KQ}{\mathcal{K}_Q}
\newcommand{\TQ}{\mathcal{T}_Q}
\newcommand{\tCQ}{\widetilde{\mathcal{C}_Q}}
\newcommand{\ZQ}{\mathbb{Z}Q}
\newcommand{\ZQz}{(\mathbb{Z}Q)_0}
\newcommand{\CZ}{\mathcal{C}_{\mathbb{Z}}}
\newcommand{\CxiZ}{\mathcal{C}_{\mathbb{Z}}^{\leq \xi}}
\newcommand{\Cxi}{\mathcal{C}_{\xi}}
\newcommand{\YxiZ}{\mathcal{Y}_{\mathbb{Z}}^{\leq \xi}}
\newcommand{\CxiZstar}{\mathcal{C}_{\xi^{*}}^{\mathbb{Z}}}
\newcommand{\barCQ}{\overline{\mathcal{C}_Q}}
\newcommand{\tdKxi}{\widetilde{K}_{\xi}}
\newcommand{\tdKxistar}{\widetilde{K}_{\xi^{*}}}
\newcommand{\DbRepQ}{\mathcal{D}^b(\mathrm{mod}\,\mathbf{k}Q)}
\newcommand{\xistar}{\xi^{*}}
\newcommand{\bkZQ}{\overline{k(\mathbb{Z}Q)}}
\newcommand{\xbar}{\overline{x}}
\newcommand{\ybar}{\overline{y}}
\newcommand{\tH}{\widetilde{\mathcal{H}}}
\newcommand{\ObQ}{\mathrm{Ob}_Q}
\newcommand{\ObCQ}{\mathrm{Ob} \left( \mathcal{C}_Q \right)}
\newcommand{\tHi}{\widetilde{H}_i}
\newcommand{\tWi}{\widetilde{W}_i}
\newcommand{\tHj}{\widetilde{H}_j}
 \newcommand{\bYx}{Y_{\bullet}(x)}
  \newcommand{\bFx}{F_{\bullet}(x)}
 \newcommand{\HQG}{\mathcal{C}_Q^{\Gamma}}
 \newcommand{\IG}{\mathcal{J}^{\Gamma}}
 \newcommand{\KQG}{\mathcal{K}_Q^{\Gamma}}
\newcommand{\mbeta}{\mathfrak{m}_{\beta}}
\newcommand{\bdelta}{\boldsymbol{\delta}}
\newcommand{\bomega}{\boldsymbol{\omega}}
\newcommand{\Qbeta}{Q_{\beta}}
   \newcommand{\Ybeta}{Y^{+}[\beta]}
 \newcommand{\Ymbeta}{Y^{-}[\beta]}
   \newcommand{\Cbeta}{C_{\bullet}^{\beta}}
   \newcommand{\Zbeta}{Z_{\beta}}
   \newcommand{\CXhd}{C_{\bullet}[(X,h),d]}
\newcommand{\JbetaL}{\mathcal{J}_{Q,\beta}^L}
\newcommand{\JbetaR}{\mathcal{J}_{Q,\beta}^R}
\newcommand{\CXh}{C_{\bullet}(X,h)}
\newcommand{\barpi}{\overline{\pi}}
\newcommand{\tRQ}{\widetilde{\mathcal{R_Q}}}
\newcommand{\RQ}{\mathcal{R}_Q}
\newcommand{\HZQ}{\mathcal{H}_{\mathbb{Z}Q}}

    \newcommand{\odi}{\overline{\mathbf{d}_i}}
    \newcommand{\udi}{\underline{\mathbf{d}_i}}

\title[Higher homological algebra and $q$-characters]{A higher homological approach to the $q$-characters of representations of quantum affine algebras}
\author{\'Elie Casbi}
\address{Oskar-Morgenstern Platz 1, 1090 Wien, Österreich}
\email{elie.casbi@univie.ac.at}
\date{}

\begin{abstract}
  For any acyclic quiver $Q$ without multiple edges, we construct a monoidal category $\RQ$ whose indecomposable objects are tensor products (over the base field) of finite-dimensional modules over the path algebra of $Q$.  We show the existence and uniqueness up to homotopy of certain distinguished  chain complexes satisfying good homological properties (higher almost split complexes) preserved under tensoring by objects in $\RQ$. As a crucial ingredient for this construction, we establish  the existence of a family of complete exceptional sequences in $\mathrm{mod}\,\mathbf{k}Q$ satisfying many good properties, which we believe might be of independent interest.  We then prove that when $Q$ admits a height function, the Euler characteristics of (the images under certain additive functor of) these complexes coincide with the truncated $q$-characters of the standard modules in  Hernandez-Leclerc's category $\mathcal{C}^{(1)}$. Applying our results to the case where the underlying graph of $Q$ is a Dynkin diagram of type $A_n, n \geq 1$, we also interpret the cluster characters of all cluster variables in the finite type cluster algebra $\mathcal{A}_Q$ as Euler characteristics of certain chain complexes in $\RQ$.  
\end{abstract}

\maketitle

\section{Introduction}

 \subsection{Background and motivations}

Let $U_q(\mathfrak{g})$  denote the quantum affine algebra associated to  an affine  Kac-Moody algebra $\mathfrak{g}$. The understanding of simple $U_q(\mathfrak{g})$-modules  is a non-trivial problem that has led to many important advances over the past decades.  Chari-Pressley \cite{CP91} gave a classification of all simple finite-dimensional $U_q(\mathfrak{g})$-modules (up to isomorphism) in terms of dominant monomials. On the other hand the attempts to describe their $q$-characters as defined by  Frenkel-Reshetikhin's  \cite{FR1} led to the discovery of deep connections with the theory of  Fomin-Zelevinsky's cluster algebras  \cite{FZ1}, via the notion of monoidal categorification introduced by Hernandez-Leclerc \cite{HL10}. More precisely, when $\mathfrak{g}$ is an affine Kac-Moody algebra, certain discrete versions of the category of finite-dimensional $U_q(\mathfrak{g})$-modules called  Hernandez-Leclerc categories have been shown to categorify  various interesting cluster structures in such a way that the  set of cluster monomials is contained into  the basis of isomorphism classes of simple objects  in the Grothendieck ring of  $\mathcal{C}_{\mathfrak{g}}$
\cite{HL10,HL16,KKOP1,KKOP2,BC}. This allows to relate certain invariants from cluster theory  ($g$-vectors, $F$-polynomials) to representation-theoretic features (loop weights, $q$-characters).  Recently, monoidal categorifications have also  been obtained using representations of shifted quantum affine algebras \cite{GHL}. In a different direction,  when $\mathfrak{g}$ is an arbitrary symmetrizable Kac-Moody algebra, the $q$-characters of certain modules over a Borel subalgebra of $U_q(\mathfrak{g})$ have been studied by Hernandez-Jimbo \cite{HerJim}, and Negut \cite{Negut} recently provided uniform formulas using the theory of shuffle algebras.

 Among the irreducible representations of $U_q(\mathfrak{g})$, the fundamental representations play a special role as they generate $\mathcal{C}_{\mathfrak{g}}$ as a monoidal category. More algebraically, the isomorphism classes of suitably ordered tensor products of fundamental representations (called standard modules) form a basis of the Grothendieck group of $\mathcal{C}_{\mathfrak{g}}$. We will respectively denote by $L(\mathfrak{m})$ and $\Delta(\mathfrak{m})$ the simple and the standard $U_q(\mathfrak{g})$-module corresponding to a dominant monomial $\mathfrak{m}$.
 When $\mathfrak{g}$ is an affine simply-laced Kac-Moody algebra, the $q$-characters of fundamental representations admit interesting combinatorial descriptions as well as non-trivial geometric interpretations  \cite{Naka01,HL16}. For more general Kac-Moody algebras, the fundamental representations are not anymore finite-dimensional, but still admit interesting combinatorial descriptions in certain cases.

 The main motivation for the construction proposed in the present work stems from our desire to provide a conceptual interpretation of some remarkable rational identities obtained in our recent joint work with Jian-Rong Li \cite{CasbiLi2}. 
 For a   Kac-Moody algebra $\mathfrak{g} = \widehat{\mathfrak{g}_0}$ such that $\mathfrak{g}_0$ is simply-laced,  and for any  orientation $Q$ of the Dynkin diagram of $\mathfrak{g}_0$, we defined (originally in \cite{CasbiLi}, and then under its definite form in \cite{CasbiLi2})  an algebra homomorphism $\td_Q : \YZ \longrightarrow \mathbf{k}(\mathfrak{t}^*)$ which is intimately related to the map $\barD$ defined in \cite{BKK} by Baumann-Kamnitzer-Knutson  in their study of the equivariant homology of Mirkovi\'c-Vilonen cycles. Here $\YZ$ denotes the ring of Laurent polynomials containing the $q$-characters of all objects in the Hernandez-Leclerc's $\CZ$, while $\mathfrak{t}$ stands for a Cartan subalgebra of $\mathfrak{g}_0$.   In \cite{CasbiLi2}, we used the map $\td_Q$ to exhibit new families of non-trivial rational identities out of the representation theory of quantum affine algebras. More precisely, we showed that for any standard module $M$ in $\CZ$, we have 
 $$ \td_Q \left( \chi_q(M) \right) = 0 .  $$
  The starting point of the present work consists in viewing these identities as the vanishing of certain alternate sums of dimension vectors (after getting rid of the denominators). In other words, to each object in $\CZ$ should correspond a chain complex satisfying some kind of exactness property, and whose objects should keep track (in a bijective way) of the Laurent monomials appearing in the $q$-character of $M$. 
  Before presenting our results, we briefly describe the  categories that will serve as underlying framework for the constructions proposed in this paper.
  
 \subsection{The hammock category}

 Given a finite acyclic quiver $Q$, let us denote by $\ZQ$ the repetition quiver of $Q$, $\mathrm{mod}\,\mathbf{k}Q$ the category of finite-dimensional modules over the path algebra of $Q$, and by $\DbRepQ$ its bounded derived category. For each $x \in \ZQz$ (viewed as the isomorphism class of an indecomposable object in $\DbRepQ$ via Happel's theorem), we consider a  pair $Y(x) := (H(x),h_x)$ where $H(x)$ is a  collection (or multiset) consisting of all isomorphism classes of indecomposable objects in $\DbRepQ$ such that $\mathrm{Hom}_{\DbRepQ}(x,y) \neq 0$, and $h_x : \ZQz \rightarrow \mathbb{Z}$ is an integer-valued function on the repetition quiver of $Q$ satisfying a certain property called \textbf{quasi-additivity} (cf. Definition~\ref{def : quasi-additive functions}). Taking unions of multisets and sums of quasi-additive functions, we obtain a family of pairs $(X,h)$ that we call \textbf{dominant objects}. We then consider a certain operation on such pairs $(X,h)$, called \textbf{Serre tilting}, which consists in replacing finitely many elements of $X$ by their image under the standard Serre functor of $\DbRepQ$, as well as modifying  $h$ in a suitable way. The pairs obtained this way form the class of objects of a monoidal category $\HQ$.  Given two multisets $X = \{\tau^{-1}x_k, k \in  K\}$ and $Y = \{y_l , l \in L\}$ and two quasi-additive  functions $f$ and $g$, the space of morphisms between $(X,f)$ and $(Y,g)$ in $\HQ$ is given by
 \begin{equation}
     \bigoplus_{  \substack{ \sigma \in \mathrm{Bij}(K,L) \\  \sharp \mathrm{Supp}_{X,Y}(\sigma) < \infty }} \bigotimes_{k \in \mathrm{Supp}_{X,Y}(\sigma)} \mathrm{Hom}_{\DbRepQ}(\tau^{-1}x_k , y_{\sigma(k)}) 
 \end{equation}
where $\mathrm{Bij}(K,L)$ denotes the set of bijections from $K$ to $L$ and $\mathrm{Supp}_{X,Y}(\sigma) := \{k \in K \mid y_{\sigma(k)} \neq \tau^{-1}x_k \}$ for any $\sigma \in \mathrm{Bij}(K,L)$.
  We call $\HQ$ the  \textbf{hammock category}, as it is essentially governed by the supports of $\mathrm{Hom}(x,-)$ for $x \in \DbRepQ$, which have been extensively studied via the hammock functions  considered by Gabriel, Brenner and Ringel \cite{Gabriel, Brenner,Ringel}. Eventually, the correspondence at the combinatorial level between our construction and the theory of $q$-characters will hinge on the hammock category: for instance, the Grothendieck group of $\HQ$ can be essentially viewed as a subring of the ring $\YZ$ containing the $q$-characters of objects in the Hernandez-Leclerc's category $\CZ$.  The classes of the dominant pairs in $\HQ$ can then be identified dominant monomials, while Serre tiltings provide an analogue of Chari's braid group action on $\YZ$ \cite{Chari02}. 
 
   \subsection{Higher almost split complexes}

   One of the main desired outcomes motivating this work consists in constructing well-behaved chain complexes in $\HQ$ whose objects shall keep track of the $q$-characters of distinguished objects in Hernandez-Leclerc's categories, such as simple or standard objects. A sensible way to tackle this question is to construct these complexes inductively by some iterated mapping cone procedure. Classically, such method works well provided the complexes constructed at each step satisfy good homological properties. Important instances of such properties  include the notion of higher exact or almost split complexes, higher Auslander-Reiten sequences, that have been the subject to numerous major works in the past few years (for example \cite{Iyama07,Jasso16,GKO13}). 
   In order to establish the existence of chain complexes satisfying similar homological properties, it will turn out being necessary to avoid working directly in $\HQ$, but rather in  the (abelian) category 
   $$ \tRQ := \bigoplus_{n \geq 0} \mathrm{mod}\, \mathbf{k}Q^{\otimes n} . $$
   The indecomposable objects in $\tRQ$ are tensor products (over the base field $\mathbf{k}$) of indecomposable $\mathbf{k}Q$-modules. 
  Note that certain special families of higher Auslander-Reiten sequences living in $\tRQ$  have been considered in \cite{Pasquali17}. However, they do not seem to be  appropriate for our needs. 
 In a sense, $\tRQ$ may feel more convenient than the hammock category, as it is  nothing but a concrete category of modules over (tensor product of) path algebras, and also its monoidal structure is more refined. However, several significant obstructions appear:  
 \begin{itemize}
     \item naively replacing objects of $\HQ$ by tensor products of $\mathbf{k}Q$-modules becomes problematic outside Dynkin types, as the hammocks then become infinite.  
     \item the chain complexes that we wish to construct might involve morphisms in $\DbRepQ$. 
     \item the class of morphisms in $\tRQ$ is gigantic, which makes it a priori difficult to construct exact or almost split sequences. 
 \end{itemize}
 The way to circumvent these obstructions is to work inside a well-chosen monoidal (but not full !) subcategory $\RQ$ of $\tRQ$. Note that in the literature, subcategories of great interest are provided by cluster tilting subcategories. However at this moment it does not seem clear how $\RQ$ can fit into this framework. Rather, we construct  $\RQ$ somewhat explicitly, by proving the existence of certain distinguished classes of objects and morphisms in $\tRQ$ that satisfy all the properties necessary for our needs. We achieve this using the theory of exceptional sequences in $\mathrm{mod}\,\mathbf{k}Q$.

 \subsection{A special family of exceptional sequences in $\mathrm{mod}\,\mathbf{k}Q$}

  Exceptional sequences were introduced by Crawley-Boevey \cite{CB92}, generalizing Rudakov's exceptional bundles  \cite{Ru90}. They have been extensively studied and generalized since then (e.g.  \cite{Ringel93,Ringel98,IguTod17,BuanMarsh21})  and have become an important notion in particular in the theory of additive categorifications of cluster algebras. Crawley-Boevey's braid group action on exceptional sequences provides an inductive way of constructing  indecomposable exceptional (i.e. without self extensions) objects in $\mathrm{mod}\,\,\mathbf{k}Q$,  which is particularly useful in wild types where these admit no combinatorial classification in general. In the present paper, we use exceptional sequences to select in a non-trivial way certain \textit{finite} subcollections of the  hammocks $H(P_i) , i \in Q_0$ (here $P_i$ denotes the indecomposable projective $\mathbf{k}Q$-modules). This is done by proving the existence of a remarkable family of exceptional sequences in $\mathrm{mod}\,\mathbf{k}Q$ satisfying strong properties. We refer to Theorem~\ref{thm : exceptional sequences} below for a detailed statement.

   \subsection{Purely exact complexes in $\RQ$}

  As mentioned above, we are going to construct certain chain complexes with objects in $\RQ$ satisfying some higher exactness properties. However, we will need a slightly stronger property, called \textbf{pure exactness}, essentially guaranteeing some compatibility with the monoidal structure of $\RQ$. More precisely, a chain complex $C_{\bullet}$ will  be called purely $\RQ$-exact if $X \otimes C_{\bullet}$ is $\RQ$-exact for every object $X$ in $\RQ$.  The following is the first main result of this paper. 
  
  \begin{thm} \label{thm : thm1 intro}
  Let $Q$ be an acyclic quiver without multiple edges. Then for every indecomposable object $M$ in $\RQ$, there exists a (unique up to homotopy) purely $\RQ$-exact almost split complex $C_{\bullet}(M)$ such that $C_0 \simeq M$. 
  \end{thm}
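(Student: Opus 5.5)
The plan is to reduce to the case of a single tensor factor and then propagate along the monoidal structure. An indecomposable object of $\RQ$ is $M = M_1 \otimes \cdots \otimes M_r$ with each $M_j$ an indecomposable $\mathbf{k}Q$-module lying in the distinguished finite collections selected by the exceptional sequences of Theorem~\ref{thm : exceptional sequences}. I would first construct, for each such indecomposable $N$, a complex $C_\bullet(N)$ with $C_0 \simeq N$, built by the iterated mapping cone procedure. The procedure is indexed by the complete exceptional sequence attached to $N$. Its first step is the morphism from $N$ given by the distinguished morphisms of $\RQ$ into the next term of the sequence (the analogue of the left almost split map). At each subsequent step one takes the cone of the map from the previous complex to the next object of the sequence. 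The properties of the sequence (Hom-orthogonality in one direction, no self-extensions, and compatibility with the hammocks $H(P_i)$) should guarantee that each new cone is again $\RQ$-exact except at the last position, where it ends with a right almost split map. Finiteness of the exceptional sequence makes the procedure terminate even in non-Dynkin type, where the hammocks are infinite.

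Second, I would establish pure exactness. Since the tensor product in $\tRQ$ is over the field $\mathbf{k}$, tensoring with a fixed object $X$ is exact on underlying vector spaces. The real content is $\RQ$-exactness, which is not full exactness, because $\RQ$ is not a full subcategory. So I would prove that the distinguished class of morphisms of $\RQ$ is closed under $X \otimes -$. I would also show that $\mathrm{Hom}_{\RQ}(Y, X \otimes C_\bullet)$ decomposes, via the bijection-indexed description of morphisms (as in the hammock category), into tensor products of Hom complexes of the factors. A Künneth-type argument then shows that exactness of one factor implies exactness of the product. With this in hand, $C_\bullet(M)$ for $M = M_1 \otimes \cdots \otimes M_r$ is built as a total tensor complex, or alternatively by running the cone procedure on one factor while carrying the others along. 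Almost splitness is inherited because a non-split map out of $M$ in $\RQ$ must be non-split in some factor.

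Third, uniqueness up to homotopy follows the standard argument for almost split sequences and higher Auslander--Reiten complexes. Given two such complexes with $C_0 \simeq C'_0 \simeq M$, the almost split property at each degree lets one lift the identity of $M$ degreewise to chain maps $C_\bullet \to C'_\bullet$ and back. The composites are homotopic to the identity, since their difference vanishes on $C_0$ and can be absorbed inductively using exactness, giving the homotopy.

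I expect the main obstacle to be the first step: showing that the mapping cones are $\RQ$-exact relative to the non-full morphism class. This is exactly where the strong properties of the exceptional sequences must be used, and where morphisms coming from shifts in $\DbRepQ$ could interfere. I would first try to verify it degree by degree, using the Hom-vanishing between later and earlier terms of the sequence.
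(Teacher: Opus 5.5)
Your proposal has a genuine gap. It misidentifies the generating objects and morphisms of $\RQ$, and so it never reaches the mechanism that makes the construction work.

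\textbf{What $\RQ$ actually is.} The building blocks are not single indecomposable $\mathbf{k}Q$-modules. They are the objects $M_\alpha=\bigotimes_{z\in\barpi_\alpha}z$ for $\alpha\in\overline{Q}_1$, together with neutral objects. The only non-invertible generating morphisms are the block maps $\eta_\alpha^{M_\alpha}$. These are pure tensors $f_{1,\alpha}\otimes\cdots\otimes f_{m,\alpha}\otimes\mathrm{id}$ that move the whole chain $x_{i,1}\otimes\cdots\otimes x_{i,m}$ to $x_{i,2}\otimes\cdots\otimes x_{i,m}\otimes Sx_i$ at once. There is no morphism of $\RQ$ sending one factor $x_i^{(t)}$ to $x_i^{(t+1)}$ while fixing the others. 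Moreover, a neutral object (e.g.\ a single non-projective module) admits no non-invertible morphism out of it at all. This breaks three of your steps:
\begin{itemize}
\item Your first step, building complexes for single modules by walking along an exceptional sequence, uses morphisms that do not exist in $\RQ$.
\item Your inheritance of almost splitness (``a non-split map out of $M$ is non-split in some factor'') fails, because each $\eta_\alpha$ acts on many tensor factors simultaneously.
\item The K\"unneth step is unsupported. The bijection-indexed Hom description is that of $\tRQ$ and of the hammock category. $\RQ$ is a non-full subcategory whose morphisms are composites of $\eta$'s tensored with identities, plus braidings, and nothing in its definition makes $\RQ$-Hom complexes factor over tensor factors.
\end{itemize}

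\textbf{What the paper does instead.} The argument is a T-system-type induction on $d(M)=|\sum_l\boldsymbol{\dim}I_{s(\alpha_l)}|$.
\begin{itemize}
\item Write $M\simeq M_{\alpha_1}\otimes M'$ and $i:=s(\alpha_1)$. By Lemma~\ref{lem : morphism eta} (resting on the leftmost inclusion of Proposition~\ref{prop : multisets barpi}), the codomain of $\eta_{\alpha_1}$ is $X\otimes\bigotimes_{\beta\in Q_1\cap\mathrm{in}(i)}M_\beta$ with $X$ neutral. So $N:=\bigotimes_\beta M_\beta\otimes M'$ has $d(N)=d(M)-\alpha_i$, and $C_\bullet(N)$ exists by induction.
\item The decisive input from the exceptional sequences is a cross-vertex vanishing, not anything along one vertex's chain; part (2) of Theorem~\ref{thm : exceptional sequences} only gives non-vanishing there. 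For each arrow $\beta:j\to i$, the composite $x_i^{(t_\beta)}\to x_i^{(t_\beta+1)}\simeq x_j^{(t_\beta)}\to x_j^{(t_\beta+1)}$ is zero by part (3). Hence $\eta_\beta\circ\eta_{\alpha_1}=0$.
\item This kills the $\eta_\beta$-components of $d_0\circ u_0$, where $u_0:=\eta_{\alpha_1}\otimes\mathrm{id}_{M'}$. The remaining components factor through $\mathrm{id}_{M_{\alpha_1}}\otimes d_0^{C_\bullet(M')}$ by bifunctoriality, which gives $u_1$. Lemma~\ref{lem : morphism} then extends this to a chain map $u_\bullet:M_{\alpha_1}\otimes C_\bullet(M')\to X\otimes C_\bullet(N)$.
\item Set $C_\bullet(M):=\mathrm{Cone}(u_\bullet[-1])$. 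It is exact by Lemma~\ref{lem : mapping cone}. It is almost split because its $d_0$ is the column of all the $\eta_{\alpha_l}$, which by definition generate the non-invertible morphisms out of $M$.
\item Purity is carried through the induction, not derived by K\"unneth. Purity of $C_\bullet(M')$ makes $T\otimes M_{\alpha_1}\otimes C_\bullet(M')$ exact. The paper then applies Lemma~\ref{lem : mapping cone} to $\mathrm{id}_T\otimes u_\bullet$, using that $\mathrm{id}_T\otimes u_0$ does not factor through $\mathrm{id}_T\otimes d_0$.
\end{itemize}

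Your uniqueness argument is fine; it is Lemma~\ref{lem : exactness implies uniqueness}. A total-tensor-complex organization would not let you avoid any of the above. Already $C_\bullet(M_\alpha)$ requires the complex of the multi-block object $\bigotimes_{\beta\in Q_1\cap\mathrm{in}(s(\alpha))}M_\beta$ and the vanishing $\eta_\beta\circ\eta_\alpha=0$.
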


 \subsection{Truncated $q$-characters as Euler characteristics}

 Given a chain complex  $C_{\bullet} = \cdots \rightarrow C_0 \rightarrow C_1 \rightarrow \cdots $   with objects in $\RQ$ (resp. $\HQ$), we will denote by $\chi(C_{\bullet})$ its Euler characteristics, which is the element of the split Grothendieck group $K_0(\RQ)$ (resp. $K_0(\HQ)$) given by 
 $$ \chi \left( C_{\bullet} \right) := \sum_{n \in \mathbb{Z}} (-1)^n [C_n]. $$
 We can now state the second main result of this paper, which provides an interpretation of the (truncated) $q$-characters of the standard modules $\Delta(\mathfrak{m})$ in Hernandez-Leclerc's category $\Cxi^{(1)}$ as Euler characteristics of the (images under $\mathcal{D}_Q$ of the) chain complexes given by Theorem~\ref{thm : thm1 intro}.  

  \begin{thm} \label{thm : thm2 intro}
  Assume  that $Q$ admits a height function. Let $M$ be an object in $\RQ$ and let $\mathfrak{m}$ denote the dominant monomial in $\YZ$ given by the class of $\mathcal{D}_Q(M)$ in $K_0(\HQ)$. Then we have 
      $$ \chi \left(  \mathcal{D}_Q \left( C_{\bullet}(M)  \right) \right)_{\mid F_i := -1} = \tchi_q \left( \Delta(\mathfrak{m}) \right) .  $$
  \end{thm}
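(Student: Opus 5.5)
The approach is to reduce to fundamental modules via multiplicativity on both sides, and then treat the single-factor case by induction along the Auslander--Reiten quiver. This mirrors how $q$-characters of standard modules are products of $q$-characters of fundamental modules.

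\textbf{Step 1 (multiplicativity).} Write $M = M_1 \otimes \cdots \otimes M_r$ with each $M_j$ an indecomposable $\mathbf{k}Q$-module sitting in a single tensor factor. On the $q$-character side, $\tchi_q(\Delta(\mathfrak{m})) = \prod_j \tchi_q(L(Y_{i_j,p_j}))$, because $\chi_q$ is a ring morphism and truncation is compatible with products of monomials in the relevant range. On the complex side, $C_\bullet(M_1)\otimes\cdots\otimes C_\bullet(M_r)$ is again a purely $\RQ$-exact almost split complex with degree-$0$ term $M$. This uses pure exactness, since tensoring a purely exact complex by objects of $\RQ$ preserves exactness, together with the almost split property of the total complex. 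By the uniqueness up to homotopy in Theorem~\ref{thm : thm1 intro}, this tensor product is homotopic to $C_\bullet(M)$. Euler characteristics are homotopy invariant in the split Grothendieck group and multiplicative under tensor products, and $\mathcal{D}_Q$ is additive and monoidal on classes. Specializing $F_i:=-1$ is a ring morphism. Hence both sides factor, and we reduce to $M$ a single indecomposable $\mathbf{k}Q$-module.

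\textbf{Step 2 (fundamental case).} For an indecomposable $M$, corresponding to a vertex $x\in\ZQz$, I would use the explicit construction of $C_\bullet(M)$ from the proof of Theorem~\ref{thm : thm1 intro}. Its terms are tensor products of modules selected from the hammock $H(x)$ through the exceptional sequences of Theorem~\ref{thm : exceptional sequences}. I would then show that the monomials in $\chi(\mathcal{D}_Q(C_\bullet(M)))$ match the monomials of the truncated $q$-character. By Hernandez--Leclerc, the truncated $q$-character $\tchi_q(L(Y_{i,p}))$ of a fundamental module in $\Cxi^{(1)}$ is expressed through the hammock (dimension vectors / $F$-polynomial of the corresponding module) once a height function $\xi$ identifies $\ZQz$ with the index set of $Y_{i,p}$. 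Concretely, the comparison is
\[
\tchi_q(L(Y_{i,p})) = Y_{i,p}\, F_{x}(A^{-1}),
\]
with $F_x$ the $F$-polynomial of the module attached to $x$.

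\textbf{Step 3 (matching by induction).} I would proceed by induction on the position of $x$ in the AR quiver, starting from simple projectives and moving through the mesh relations. The mapping-cone step that builds $C_\bullet(x)$ from $C_\bullet$ of the AR-predecessors should correspond to the Frenkel--Mukhin-type recursion, or the $T$-system / mesh relation, satisfied by truncated $q$-characters. Under $\mathcal{D}_Q$ the quasi-additive function $h$ contributes the variables $F_i$. The specialization $F_i=-1$ converts the signed classes $(-1)^n[C_n]$ into the $A_{i,q}^{-1}$-monomials with positive coefficients, which I need to check on one mesh.

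\textbf{Main obstacle.} The hardest part is Step 3. There I must verify that the terms in homological degree $n$ of $C_\bullet(x)$ correspond bijectively to monomials of $\tchi_q$ whose $F$-degree has parity $n$, so that signs cancel exactly after $F_i:=-1$, with no extra cancellations. I would first try to prove directly that each $C_n$ is multiplicity-free and that its summands are indexed by submodules / subrepresentations of fixed total dimension $n$. This would turn the Euler characteristic into the generating function of the Grassmannian Euler characteristics, namely the $F$-polynomial, and reduce the claim to the Hernandez--Leclerc formula.
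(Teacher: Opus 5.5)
Your proposal has genuine gaps, starting in Step 1. The factors of a dominant object are not single $\mathbf{k}Q$-modules attached to arbitrary vertices of $\ZQz$. They are the objects $M_\alpha=\bigotimes_{z\in\barpi_\alpha}z$, $\alpha\in\overline{Q}_1$, with $\mathcal{D}_Q(M_\alpha)=Y(x_{s(\alpha)})$. So only the projective vertices $x_i$, i.e. the monomials $Y_{i,\xi(i)-2}$, ever occur. A single non-projective module is a neutral object, out of which $\RQ$ has no non-invertible morphisms, and it carries no fundamental monomial. Splitting $M$ into single modules therefore loses the structure.

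Even with the correct factors $M_{\alpha_k}$, your claim that $C_\bullet(M_{\alpha_1})\otimes\cdots\otimes C_\bullet(M_{\alpha_r})$ is purely $\RQ$-exact and almost split is unsupported. Pure exactness only controls $X\otimes C_\bullet$ for an \emph{object} $X$. The total complex of a tensor product of complexes is an iterated cone, and $\RQ$-exactness (a weak-cokernel condition in a non-full subcategory) survives cones only under the specific hypotheses of Lemma~\ref{lem : mapping cone}. Without this, Lemma~\ref{lem : exactness implies uniqueness} cannot be invoked. Multiplicativity of the Euler characteristics is true, but it can only be recovered by induction from the cone recursion below, not from uniqueness.

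Steps 2--3 do not close the gap either. You import a quiver-Grassmannian formula for truncated fundamental $q$-characters, which is established in Dynkin type, whereas the theorem covers every $Q$ with a height function. Your plan to show each $C_n$ is multiplicity-free with summands indexed by submodules also fails beyond trees. For the quiver of Example~\ref{ex : exceptional sequences}, which has a height function, the polynomial to be matched for $M_{4\rightarrow\ast}$ is $1+y_4(1+y_2+y_1y_2)(1+y_3+y_1y_3)$. Its coefficient at $y_1y_2y_3y_4$ is $2$, coming from a $\mathbb{P}^1$ of submodules of $I_4$.

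Conversely, the sign issue you single out as the main obstacle is automatic. Each summand in degree $n$ is $n$ Serre tiltings away from $M$, and each tilting multiplies the class by $F_iA_i^{-1}$ (Remark~\ref{rk : Serre tiltings vs Chari's action}).

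The missing idea is to use the construction itself. Let $i=s(\alpha_1)$ and $M'=M_{\alpha_2}\otimes\cdots\otimes M_{\alpha_r}$. Then $C_\bullet(M)\simeq\mathrm{Cone}\bigl(M_{\alpha_1}\otimes C_\bullet(M')\to C_\bullet(M'_{\alpha_1}\otimes M')\bigr)[-1]$, with $\mathcal{D}_Q(M'_{\alpha_1})=\mu_iY(x_i)$. Moreover $M'_{\alpha_1}\otimes M'\simeq X\otimes N$, where $N=M'\otimes\bigotimes_{\beta\in\mathrm{in}(i)}M_\beta$ satisfies $\boldsymbol{d}(N)=\boldsymbol{d}(M)-\alpha_i$. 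Induct on $\boldsymbol{d}(M)$, with base case $i$ a source. The recursion runs through the in-neighbours $j\to i$, so starting from simple projectives is the wrong end. Use multiplicativity only on the $q$-character side, where it is free: $\mathfrak{m}=Y_{x_i}\mathfrak{m}'$ and $\mathfrak{n}=\mathfrak{m}'\prod_{j\to i}Y_{x_j}$. Everything then reduces to the single $T$-system identity $Y_{i,\xi(i)}\tchi_q(L(Y_{i,\xi(i)-2}))=Y_{i,\xi(i)-2}Y_{i,\xi(i)}+\prod_{i\to j}Y_{j,\xi(j)}\prod_{j\to i}\tchi_q(L(Y_{j,\xi(j)-2}))$, with no quiver Grassmannians needed.
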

The condition that $Q$ admits a height function is always satisfied if  the underlying graph of $Q$ is a tree (in particular if $Q$ is a Dynkin quiver or a tame quiver of type $D_n^{(1)}$ or $E_n^{(1)}$) and is also satisfied for certain quivers of tame type $A_{2n+1}^{(1)}, n \geq 1$. For other kinds of quivers, we believe the Laurent polynomials provided by the left hand side in the identity of Theorem~\ref{thm : thm2 intro} may be of independent interest. 

 The strong homological properties satisfied by the chain complexes $C_{\bullet}(M)$ make them a powerful tool to exhibit interesting objects and morphisms in the bounded homotopy category of $\RQ$. In particular, one may wonder whether this approach may allow to also realize the truncated $q$-characters of (at least some) \textit{simple} $U_q(\mathfrak{g})$-modules as Euler characteristics of certain well-behaved chain complexes in $\RQ$. The  next result illustrates that this is indeed the case. We focus on  the case where $\mathfrak{g} = \widehat{\mathfrak{sl}_{n+1}}$ and $Q$ is an arbitrary orientation of the Dynkin diagram of $\mathfrak{sl}_{n+1}$, leaving more general situations for future investigation.

  \begin{thm} \label{thm : thm3 intro}
    Assume $Q$ is a Dynkin quiver of type $A_n , n \geq 1$. Then for each positive root $\beta \in \Delta_+$, there exists a chain complex $\Cbeta$ in $\RQ$ such that 
    $$ \chi \left( \mathcal{D}_Q(\Cbeta) \right)_{\mid F_i := -1} = \tchi_q(L[\beta]) $$
    where $L[\beta]$ is the (unique up to isomorphism) simple module in Hernandez-Leclerc's category $\mathcal{C}^{(1)}$ categorifying the cluster variable of denominator vector $\beta$ in $\AQ$. 
  \end{thm}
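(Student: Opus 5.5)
Since $Q$ is of type $A_n$, the cluster algebra $\AQ$ is of finite type and its cluster variables are indexed by $\Delta_+ \sqcup \{-\alpha_i\}$. Via Hernandez--Leclerc the simple module $L[\beta]$ is a Kirillov--Reshetikhin-type (in fact, for type $A$, a minimal affinization / snake) module whose truncated $q$-character is given by the cluster character of the rigid $\mathbf{k}Q$-module $M_\beta$ of dimension vector $\beta$. The plan is therefore to build $\Cbeta$ by an iterated mapping cone out of the purely $\RQ$-exact almost split complexes of Theorem~\ref{thm : thm1 intro}, mirroring the exchange relations (equivalently the $T$-system / mesh relations) that express $x_\beta$ from initial cluster variables. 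I would proceed by induction on the height of $\beta$, or more precisely along a source-adapted ordering of $\Delta_+$ given by the Auslander--Reiten quiver of $\mathrm{mod}\,\mathbf{k}Q$, so that each $M_\beta$ is obtained from projectives via almost split sequences $0 \to \tau M_\beta \to E \to M_\beta \to 0$.

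First step: for $\beta = \alpha$ corresponding to a projective $P_i$ (or the base of the induction), set $\Cbeta := C_\bullet(M)$ for the appropriate indecomposable $M \in \RQ$ whose image under $\mathcal{D}_Q$ is the fundamental dominant monomial; since fundamental modules are standard, Theorem~\ref{thm : thm2 intro} gives $\chi(\mathcal{D}_Q(\Cbeta))_{\mid F_i := -1} = \tchi_q(\Delta(\mathfrak{m})) = \tchi_q(L[\beta])$. Second step (inductive): the mesh/exchange relation in $\AQ$ reads $x_{\tau\beta}\, x_{\beta} = \prod_{\gamma \to \beta} x_\gamma + 1$ (suitably with frozen/coefficient conventions), which on the $q$-character side becomes the $T$-system $[L[\tau\beta]][L[\beta]] = [\bigotimes_\gamma L[\gamma]] + [L(\mathfrak{m}')]$. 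I would realize the product $\chi_q(L[\tau\beta]) \cdot \chi_q(L[\beta])$ as the Euler characteristic of $C^{\tau\beta}_\bullet \otimes C^{\beta}_\bullet$ and construct a chain map from $\bigotimes_\gamma C^\gamma_\bullet$ (a tensor product of already-constructed complexes, using the monoidal structure of $\RQ$) whose cone has the right Euler characteristic. Concretely, I would define $C^{\beta}_\bullet$ as the cone of a chain map $\Phi_\beta : \bigotimes_{\gamma\to\beta} C^{\gamma}_\bullet \otimes (\text{complex for the dual factor}) \to C_\bullet(M_{\beta}^{\rm std})$, where $C_\bullet(M^{\rm std}_\beta)$ is the almost split complex attached to the standard module with the same dominant monomial as $L[\beta]$; then $\chi$ of the cone is the difference, matching the standard-to-simple relation $\tchi_q(\Delta) - \tchi_q(L[\beta]) = $ (sum of $q$-characters of lower terms), which in type $A$ for snake modules is itself a product of smaller $L[\gamma]$'s.

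The existence of $\Phi_\beta$ is where pure exactness enters: since $C_\bullet(M^{\rm std}_\beta)$ is almost split with $C_0 \simeq M$, any morphism from a complex whose degree-$0$ term maps to $M$ non-split-epi lifts to a chain map, and tensoring by objects of $\RQ$ preserves $\RQ$-exactness, so the lifting argument works on tensor products as well; uniqueness up to homotopy of Theorem~\ref{thm : thm1 intro} makes the cone well defined in the homotopy category. Finally, applying $\mathcal{D}_Q$ and specializing $F_i := -1$ is additive on Euler characteristics, so the identity for $\Cbeta$ follows from the inductive hypothesis, Theorem~\ref{thm : thm2 intro}, and the corresponding identity of truncated $q$-characters in $\mathcal{C}^{(1)}$.

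The main obstacle I expect is the middle step: verifying that the relevant combinatorial identity between $\tchi_q(\Delta(\mathfrak{m}))$, $\tchi_q(L[\beta])$ and the smaller pieces holds exactly at the level of the truncated $q$-characters and is compatible with the specific monomials produced by $\mathcal{D}_Q$ (in particular signs from $F_i := -1$). I would first test it on $\beta = \alpha_i+\alpha_{i+1}$ in $A_2$, where $L[\beta]$ is a Kirillov--Reshetikhin module and the relation is the length-two $T$-system, and then use the explicit hammock description of $H(P_i)$ in type $A$ (rectangular hammocks) to match monomials term by term.
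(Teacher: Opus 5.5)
\textbf{There is a genuine gap: the inductive step does not go through.}

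\emph{The character identity is false.} Your concrete construction rests on the claim that $\tchi_q(\Delta(\mathfrak{m}))-\tchi_q(L[\beta])$ is a single product of smaller $\tchi_q(L[\gamma])$'s. This already fails for $Q=1\to 2\leftarrow 3$ and $\beta=\alpha_2$. There $\mathfrak{m}=Y_{2,\xi(2)-2}Y_{1,\xi(1)}Y_{3,\xi(3)}$ and $\tchi_q(L[\alpha_2])=\mathfrak{m}(1+A_2^{-1})$, whereas $\tchi_q(\Delta(\mathfrak{m}))=\mathfrak{m}\,(1+A_2^{-1}+A_1^{-1}A_2^{-1}+A_2^{-1}A_3^{-1}+A_1^{-1}A_2^{-1}A_3^{-1})$. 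The difference has two incomparable leading monomials, $\mathfrak{m}A_1^{-1}A_2^{-1}$ and $\mathfrak{m}A_2^{-1}A_3^{-1}$, so it is not a product of truncated $q$-characters. It is in fact the sum of the classes of the two frozen Kirillov--Reshetikhin modules at $1$ and $3$ and of $L(Y_{2,\xi(2)})$. In general you would need composition multiplicities of standard modules, which nothing in your argument controls.

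\emph{The chain map $\Phi_\beta$ is not produced by your mechanism.} Left almost splitness of $C_\bullet(M)$ concerns morphisms \emph{out of} $C_0=M$. Lemma~\ref{lem : morphism} builds chain maps \emph{out of} a left exact complex once the degree $0$ and $1$ components are given. Neither yields maps into $C_\bullet(M)$ that hit $M$. Moreover, in $\RQ$ the non-invertible morphisms are generated by the $\eta_\alpha$, each going from an object to a Serre tilting of it, so lower objects admit no nonzero morphism to the dominant object $M$ at all.

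\emph{Further problems.} Pure exactness only concerns $X\otimes C_\bullet$ for objects $X$, so it gives no control over $\bigotimes_\gamma C^\gamma_\bullet$ built from earlier cones. The mesh relation $x_{\tau\beta}x_\beta=\prod_\gamma x_\gamma+1$ would require dividing by $\chi(C^{\tau\beta}_\bullet)$, and it presupposes $C^\beta_\bullet$. Finally, your base case is misplaced. Since $\bomega(h_{x_i})=\boldsymbol{\dim}I_i$, the fundamental modules are $L[\boldsymbol{\dim}I_i]$. For instance $L[\boldsymbol{\dim}P_2]=L[\alpha_2]$ above is not standard, so Theorem~\ref{thm : thm2 intro} does not apply to it.

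\textbf{The missing idea} is the two-term \emph{positive} recursion of Lemma~\ref{lem : cluster mutation}. For $i$ a sink of $\Qbeta$ it reads $F[\beta]=F[\beta-\boldsymbol{\dim}I_i^{\beta}]+\widehat{y_i}\,F[\beta-\alpha_i]$, and it comes from the Ptolemy exchange of $u[\beta]$ with $u[-\alpha_i]$. Its second term is exactly the effect of one Serre tilting, $[\mu_iY]=F_iA_i^{-1}[Y]$. So the minus sign of a mapping cone is absorbed by the specialization $F_i:=-1$, and no lower simples ever need to be subtracted.

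The paper inducts on pairs $(M,h)$ with $h$ dominant and $\bomega(h)=\beta$, starting from the trivial case $\beta=0$ (the complex $M\to 0$). Set $h'=h+h_{\tau^{-1}x_i}$ and $g=h'-\delta_{x_i}$. Lemmas~\ref{lem : weight first lemma} and~\ref{lem : weight second lemma} give $\bomega(h')=\beta-\boldsymbol{\dim}I_i^\beta$ and $\bomega(g)=\beta-\alpha_i$. Then $C_\bullet(M,h)$ is a single cone of the composite $C_\bullet(M,h')\to C_\bullet(N)\to C_\bullet(N,g)$, where $\mathcal{D}_Q(N)=\mu_i\mathcal{D}_Q(M)$. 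The exact complex $C_\bullet(N)$ of Theorem~\ref{thm : thm1 intro} serves only as a relay that makes Lemma~\ref{lem : morphism} applicable. This is why the induction also carries along morphisms $C_\bullet(M,h)\to C_\bullet(N_Z)[1-\sharp Z]$ with $\mathcal{D}_Q(N_Z)=\mu_Z\mathcal{D}_Q(M)$.
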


 In view of the monoidal categorification results by  Brito-Chari \cite{BC} and Kashiwara-Kim-Oh-Park \cite{KKOP1}, all simple modules in $\mathcal{C}^{(1)}$ are cluster monomials and thus simple truncated $q$-characters can be essentially identified with cluster characters. Thus the proof of Theorem~\ref{thm : thm3 intro}  will be carried out purely in the language of cluster algebras. 

  \subsection{Perspectives} 

 The framework proposed in this paper allows to construct  new instances of complexes satisfying higher exactness (or almost splitness) properties, in a compatible way with the monoidal structure of the underlying category.  Thus, our construction may allow to produce large families of new non-trivial examples of $n$-exact sequences (potentially including the case $n=0$  recently developed in \cite{Gulisz25}).
 The fact that their Euler characteristics coincide with certain distinguished (standard or simple) $q$-characters hence builds a new bridge between the representation theory of quantum affine algebras on the one hand and the  theory of $n$-exact complexes on the other hand.

 We  would also like to outline some upshot from a cluster-theoretic perspective. Indeed, while quantum affine algebras are involved in the theory of monoidal categorifications of cluster algebras, all the key features making possible the construction of our chain complexes are controlled by objects typical from the theory of additive categorifications of cluster algebras, above all exceptional sequences. Thus we believe our approach may possibly yield to a better understanding  of the  connections between these two theories at a categorical level. 

  Finally, though the cluster-related result of this paper (Theorem~\ref{thm : thm3 intro}) focus on the type $A$ case, we expect the framework developed in this paper to provide powerful tools to realize more general cluster characters as Euler characteristics, raising the question of the potential existence of monoidal categorifications of other cluster structures than the ones coming from Lie theory.

   \subsection{Outline of the paper}
    
The paper is organized as follows.  Sections~\ref{sec : background} and~\ref{sec : reminders QAAs} are respectively devoted to the necessary recollections on Auslander-Reiten theory and Hernandez-Leclerc's categories of modules over quantum affine algebras. Section~\ref{sec : hammock category} introduces the hammock category and relates it to the theory of $q$-characters. In Section~\ref{sec : exceptional sequences}, we present a result on complete exceptional sequences in  $\mathrm{mod}\,\mathbf{k}Q$ which is the crucial technical feature upon which our constructions hinge. In Section~\ref{sec : the category RQ} we define the category $\RQ$ which will serve as underlying category for the main results of this paper. Section~\ref{sec : exact complexes} is devoted to the main homological definitions  as well as the proofs of Theorems~\ref{thm : thm1 intro} and~\ref{thm : thm2 intro}. Finally, in Section~\ref{sec : Euler characteristics and qcharacters} we prove Theorem~\ref{thm : thm3 intro} after a brief recollection on cluster theory.

 \subsection*{Acknowledgements}
  I would like to express  all my gratitude to Gordana Todorov and Bernhard Keller for patiently providing me their insights and answering my questions about Auslander-Reiten theory. I also thank Masaki Kashiwara, Pavel Etingof,  Bernard Leclerc, David Hernandez,  Daniel Labardini-Fragoso, Jian-Rong Li and Keyu Wang for fruitful discussions. Finally, I warmly thank Vitor Gulisz for making very useful suggestions on early versions of this work. This project was funded by the Austrian Science Fund FWF 10.55776/ESP3162324.

 \section{Auslander-Reiten theory} \label{sec : background}

  \subsection{Auslander-Reiten theoretic background}
 \label{sec : reminders on AR}

  Let $Q$ be a (finite) acyclic quiver multiple edges. Let us denote by $I := Q_0$ its set of vertices. We denote by $\ZQ$ the repetition quiver of $Q$, whose set of vertices is $\ZQz  := \{(i,m) , i \in I , m \in \mathbb{Z}\}$ and whose set of arrows is given by 
$$ \forall \alpha := i \rightarrow j ,  \quad (i,m) \rightarrow (j,m) \enspace \text{and} \enspace (j,m) \rightarrow (i,m+1) .  $$
 By definition, a section of $Q$ in $\ZQ$ refers to any subset of $\ZQ$ of the form $\{(i,m) , i \in I\}$ for a  In particular, given any vertex $x \in \ZQz$, there is a unique section of $Q$ in $\ZQ$ containing $x$. We shall denote it by $Q_x$. given $m \in \mathbb{Z}$. 
  We say that $Q$ admits a height function if there exists a function $\xi : I \rightarrow \mathbb{Z}$ such that $\xi(j) = \xi(i)-1$ if there is an arrow $i \rightarrow j$ in $Q$ (such a function will then be called a height function adapted to $Q$). If $Q$ admits a height function, then the set of vertices and arrows of $\ZQ$ can be conveniently respectively described  as 
  $\ZQz := \{(i,p) , p \in i + 2 \mathbb{Z}\}$ and 
  $$ \forall \alpha  := i \rightarrow j,  \quad (j,p) \rightarrow (i,p+1) \enspace \text{and} \enspace (i,p) \rightarrow (j,p+1) .  $$
   A section of $Q$ in $\ZQ$  then corresponds to any subset of $\ZQ$ of the form $\{(i,p_i) , i \in I\}$ such that $p_{s(\alpha)} - p_{t(\alpha)} = 1$ for every arrow $\alpha$ in $Q$.
   We will denote by $\tau$ the Auslander-Reiten translate on $\ZQ$ which is given by $\tau (i,p) := (i,p-2)$.
   In all what follows we will be working over an algebraically closed field $\mathbf{k}$ of characteristic zero. We denote by $\mathrm{mod}\,\mathbf{k}Q$ the category of finite dimensional representations of $Q$ over $\mathbf{k}$ (equivalently, finite-dimensional modules over the path algebra $\mathbf{k}Q$ of $Q$) and by $\DbRepQ$ the bounded derived category of $\mathrm{mod}\,\mathbf{k}Q$. It follows from Happel's theorem (Proposition 4.6 in \cite{Happel}) that there is an injective set-theoretic map $j_Q$ from $\ZQz$ to the set of isomorphism classes of indecomposable objects in $\DbRepQ$. This injection is moreover a bijection in the case where $Q$ is a Dynkin quiver, i.e. an orientation of the Dynkin diagram of a simple complex Lie algebra of type $A_n , n \geq 1 , D_n , n \geq 4$ or $E_n , n = 6,7,8$.
   We denote by $\Sigma$ the standard shift in $\DbRepQ$ and by $\tau$  the Auslander-Reiten  translate functor in $\DbRepQ$. It follows from the Auslander-Reiten formula that the functor 
    $$ S := \Sigma \tau = \tau \Sigma $$
    is a Serre functor in $\DbRepQ$, i.e. 
    \begin{equation} \label{eq : isom def of Serre functor}
         \forall M,N \enspace \mathrm{Hom}(M,N) \simeq D \mathrm{Hom}(N,SM) 
      \end{equation}    
    where $D$ denotes the usual duality of $\mathbf{k}$-vector spaces. 
Recall the notion of an irreducible morphism as introduced by Auslander and Reiten in \cite{AR4}: a morphism $f : X \to Y$ in a given category is called \textit{irreducible} if $f$ is neither a split monomorphism nor a split epimorphism, and for any commutative diagram \[ \begin{tikzcd}
                                   & W \arrow[rd, "h"] &   \\
X \arrow[rr, "f"'] \arrow[ru, "g"] &                   & Y
\end{tikzcd} \] either $g$ is a split monomorphism or $h$ is a split epimorphism. (recall that a morphism $u$ is a split monomorphism when there is some $u'$ such that $u'u = 1$, and that $u$ is a split epimorphism when there is some $u'$ for which $uu' = 1$).

 For each $i \in I$, we will denote by $P_i$ (resp. $I_i$) the indecomposable projective (resp. injective) $\mathbf{k}Q$ module at the vertex $i$. Recall that their dimension vectors are given by 
$$ \boldsymbol{\dim} P_i = \sum_{ j \in I} p_Q(i,j) \alpha_j  \qquad 
  \boldsymbol{\dim} I_i = \sum_{ j \in I} p_Q(j,i) \alpha_j $$
  where $p_Q(k,l)$ denotes the number of oriented paths  in $Q$ from a vertex $k$ to a vertex $l$ of $Q$. 
Given an element $\beta \in \bigoplus_{i \in I} \mathbb{Z}_{\geq 0}\alpha_i$, we denote by $\mathrm{Supp}(\beta) := \{ i \in I \mid a_i>0 \}$ and by $\Qbeta$ the full subquiver of $Q$ having $\mathrm{Supp}(\beta)$ as set of vertices. We will then denote by $P_i^{\beta}$ (resp. $I_i^{\beta}$) denote the   the  indecomposable projective (resp. injective) $\mathbf{k}\Qbeta$ modules at the vertex $i$.

 \subsection{Homotopy categories}

 Given an additive category $\mathcal{H}$, we will denote by $\mathcal{K}^b(\mathcal{H})$ the bounded homotopy category of $\mathcal{H}$. Objects in $\mathcal{K}^b(\mathcal{H})$ will be denoted  
 $$ C_{\bullet} :  \enspace \cdots  \rightarrow C_{n-1} \xrightarrow{d_{n-1}} C_n \xrightarrow{d_n} C_{n+1} \rightarrow \cdots  $$
 and we will denote by $C_{\bullet}[1]$ the shift to the left of $C_{\bullet}$, i.e. the image of $C_{\bullet}$ under the suspension functor for the usual triangulated structure on $\mathcal{K}^b(\mathcal{H})$, i.e. the degree $n$ object of $C_{\bullet}[1]$ is given by $C_{n+1}$. We denote by $\chi(C_{\bullet})$ the Euler characteristics of a complex $C_{\bullet}$, defined as 
 $$ \chi(C_{\bullet}) := \sum_{n \in \mathbb{Z}}(-1)^n [C_n]  \enspace \in K_0(\mathcal{H}) $$
 where $K_0(\mathcal{H})$ stands for the split Grothendieck group of $\mathcal{H}$. Given two objects $C_{\bullet}$ and $D_{\bullet}$ and a morphism $u_{\bullet} : C_{\bullet} \rightarrow D_{\bullet}$ in $\mathcal{K}^b(\mathcal{H})$, we will denote by $\mathrm{Cone}(u_{\bullet})$ the mapping cone of $u_{\bullet}$, which is the object of $\mathcal{K}^b(\mathcal{H})$ whose degree $n$ object is $C_{n+1} \oplus D_{n}$ and whose differentials are given by 
 $$ d_n^{\mathrm{Cone}(u_{\bullet})} := 
  \begin{pmatrix}
      d_{n+1}^{C_{\bullet}} & u_{n+1} \\
       0 & -d_{n}^{D_{\bullet}}
  \end{pmatrix}
 $$
 Classically, we get a distinguished triangle in $\mathcal{K}^b(\mathcal{H})$:
 $$ C_{\bullet} \xrightarrow{u_{\bullet}} D_{\bullet} \rightarrow \mathrm{Cone}(u_{\bullet}) \rightarrow C_{\bullet}[1] . $$
 Assuming now that $\mathcal{H}$ comes with a symmetric monoidal structure $\otimes$, $\mathcal{K}^b(\mathcal{H})$ can then be naturally endowed with a structure of (symmetric) triangulated monoidal category. We do not provide  its explicit description here as it will not be needed. We simply mention that if $X$ is an object in $\mathcal{H}$ and $C_{\bullet} \in \mathcal{K}^b(\mathcal{H})$, we will denote by $X \otimes C_{\bullet}$ the chain complex whose degree $n$ object is $X \otimes C_n$ and whose $n$-th differential is $\mathrm{id}_X \otimes d_n^{C_{\bullet}}$. 

 \section{Representations of quantum affine algebras}
  \label{sec : reminders QAAs}

 This section provides necessary reminders about the representation theory of quantum affine algebras. We will mostly focus on the  symmetric affine case and will recall the definitions and main features of the Hernandez-Leclerc categories. 

 \subsection{Representations of quantum affine algebras and their $q$-characters} \label{sec : q-characters}

 Let $\mathfrak{g}$ be a simply-laced type simple complex Lie algebra and let $U_q(\widehat{\mathfrak{g}})$ denote the associated quantum affine algebra. The category $\mathcal{C}$ of finite-dimensional representations of $U_q(\widehat{\mathfrak{g}})$ is an abelian monoidal category which is not semisimple,  not braided, and that contains  infinitely (and even uncountably) many simple objects. More precisely, it was proved by Chari-Pressley \cite{CP95a} that the irreducible representations in $\mathcal{C}$ are classified (up to isomorphism) by a monoid $\mathcal{P}^+$ whose elements are called dominant monomials and which is defined as the free monoid generated by indeterminates $Y_{i,a} , i \in I , a \in \mathbb{C}^{*}$. Here $I$ denotes an indexing set of the simple roots of $\mathfrak{g}$. We will denote by $L(\mathfrak{m})$ the simple module corresponding to $\mathfrak{m}$ under this classification. The simple modules of the form $L(Y_{i,a})$ are called \textbf{fundamental representations}. Once a certain order has been fixed on $I \times \mathbb{C}^*$, the ordered tensor products  of fundamental representations are called \textbf{standard modules}. If $\mathfrak{m} = \prod_{i,a} Y_{i,a}^{u_{i,a}}$, we will denote by $\Delta(\mathfrak{m})$ the standard module given by the ordered tensor product of the $L(Y_{i,a})^{\otimes u_{i,a}}$. It is then known that for any dominant monomial $\mathfrak{m}$, the simple module $L(\mathfrak{m})$ is isomorphic to a quotient of $\Delta(\mathfrak{m})$. In this sense, the fundamental representations generate $\mathfrak{C}$ as a monoidal category.  Although $\mathcal{C}$ is not symmetric, its Grothendieck ring is commutative. Frenkel-Reshetikhin \cite{FR1} introduced an injective ring homomorphism 
 $$ \chi_q : \enspace K_0(\mathcal{C}) \longrightarrow \mathcal{Y} := \mathbb{Z}[Y_{i,a}^{\pm} , i \in I , a \in \mathbb{C}^*]  $$
 called the $q$-character morphism.  Essentially, the  Laurent monomials appearing in the $q$-character of a given object $M$ in $\mathcal{C}$ represent the eigenvalues of the action of the (loop) Cartan subalgebra of $U_q(\widehat{\mathfrak{g}})$ on $M$ and the coefficients of $\chi_q(M)$ encode the dimensions of the corresponding eigenspaces.  It is known that the  \textbf{renormalized $q$-character} of $M$ defined as $\mathfrak{m}^{-1} \chi_q(M)$ is a polynomial in the $A_{i,a}^{-1} , i \in I , a \in \mathbb{C}^
 *$ with constant term $1$, where  
\begin{equation}  \label{eq : def of Ais}
  \forall i \in I, a \in \mathbb{C^{*}}, \enspace     A_{i,a} := Y_{i,aq^{-1}}Y_{i,aq} \prod_{j \sim i}Y_{i,a}^{-1} . 
 \end{equation}

  \subsection{Hernandez-Leclerc categories} \label{sec : HL categories}

 Motivated by exhibiting deep connections between the representation theory of quantum affine algebras and the combinatorics of cluster algebras introduced by Fomin and Zelevinsky \cite{FZ1}, Hernandez-Leclerc defined several monoidal full subcategories of $\mathcal{C}$, which we now briefly recall. 
Let $\IZ$ denote the set of pairs
$$ \IZ := \{ (i,p) , i \in I , p \in i + 2 \mathbb{Z} \} $$
 and let $\YZ$ denote the ring
 $$ \YZ := \mathbb{Z}[Y_{i,p}^{\pm 1} , (i,p) \in \IZ]. $$
Following \cite{HL15}, we denote by $\CZ$ the smallest monoidal  full  subcategory of $\mathcal{C}$ containing the fundamental representations $L(Y_{i,q^p})$ for $p \in i + 2 \mathbb{Z}$. Following the common use, we will be writing $Y_{i,p}$ (resp. $A_{i,p}$) as a shorthand notation for $Y_{i,q^p}$ (resp. $A_{i,q^p}$) in all what follows. The simple objects of $\CZ$ are the modules $L(\mathfrak{m})$ where $\mathfrak{m}$ is a monomial in the $Y_{i,p}$. The category $\CZ$ admits various kinds of well-known short exact sequences that play an important role in the theory of monoidal categorifications of cluster algebras \cite{HL16,KKOP2}. The following special case will be useful to us later
\begin{equation} \label{eq : T-system}
   0 \longrightarrow \bigotimes_{j \sim i} L(Y_{j,p+1}) \longrightarrow L(Y_{i,p}) \otimes L(Y_{i,p+2}) \longrightarrow L(Y_{i,p}Y_{i,p+2}) \longrightarrow 0 .  
\end{equation}
Note that the middle term is the standard module $\Delta(Y_{i,p}Y_{i,p+2})$.
Frenkel-Reshetikhin's $q$-character morphism restricts to an injective ring homomorphism 
 $$ \chi_q : \CZ \longrightarrow \YZ . $$
 The polynomial identity obtained by applying $\chi_q$ to~\eqref{eq : T-system} is an instance of the so-called $T$-systems. 
 
  We now fix an orientation $Q$ of the Dynkin diagram of $\mathfrak{g}$ and $\xi$ a height function adapted to $Q$. Then we set, for every positive integer $l \geq 1$,
 $$  \Ixi^{(l)} := \{ (i,p) \in \IZ , \enspace \xi(i)-2l \leq p \leq \xi(i)\} \qquad \text{and} \qquad  \Ixi := \{ (i,p) \in \IZ , p \leq \xi(i) \} $$
 and we denote by $\Cxi^{(l)}$ (resp. $\CxiZ$) the smallest full monoidal subcategory of $\CZ$ containing all the fundamental representations $L(Y_{i,p})$ such that $(i,p) \in \Ixi^{(l)}$ (resp. $(i,p) \in \Ixi$). 
Finally, following \cite{HL10,HL16}, we define for each $M \in \CxiZ$ the \textbf{truncated $q$-character} $\tchi_q(M)$ of $M$  as the element of $\YZ$ obtained from $\chi_q(M)$ by dropping all the terms involving variables $Y_{i,p}$ with $p> \xi(i)$. This gives another injective ring homomorphism 
 $$ \tchi_q : \CxiZ \longrightarrow \YxiZ :=\mathbb{Z}[Y_{i,p}^{\pm 1} ,  (i,p) \in \Ixi] .  $$

\subsection{The case of the category $\Cxi^{(1)}$}

 In this paper, we will be particularly interested in the category $\Cxi^{(1)}$. The following truncated $q$-characters are known:
 $$ \tchi_q(L(Y_{i, \xi(i)})) = Y_{i, \xi(i)} \qquad \tchi_q(L(Y_{i, \xi(i)-2} Y_{i, \xi(i)})) = Y_{i, \xi(i)-2}Y_{i, \xi(i)} $$
 and furthermore, the renormalized truncated $q$-character of any object in $\Cxi^{(1)}$ is a polynomial (with constant term $1$) in the $A_{i, \xi(i)-1}^{-1} , i \in I$. For that reason we will often use the shorthand notation $A_i := A_{i, \xi(i)-1}$ for each $i \in I$. 
 When $\mathfrak{g}$ is not finite-dimensional, the fundamental representations of $U_q(\widehat{\mathfrak{g}})$ are not anymore finite-dimensional. Nonetheless, they can still be studied by considering the Hernandez-Jimbo category $\mathcal{O}$ \cite{HerJim}, and the category $\Cxi^{(1)}$ can then be defined as a subcategory of $\mathcal{O}$ in the same way as above.  

  \

\section{The hammock category $\HQ$}
\label{sec : hammock category}


  \subsection{Quasi-additive functions}

  Extending Gabriel's  terminology of additive functions from \cite{Gabriel}, we consider the following  class of integer-valued functions on $\ZQz$.

   \begin{deftn} \label{def : quasi-additive functions}
     A function $f : \ZQz \longrightarrow \mathbb{Z}$ will be called quasi-additive if $\widetilde{f}$ has finite support, i.e. $\widetilde{f}(x)=0$ on all but finitely many vertices of $\ZQ$.
   \end{deftn}
For each $x \in \ZQz$, we consider the unique  quasi-additive function $h_x$ determined by the requirements 
    \begin{enumerate}
        \item $h_x(y)= \dim \mathrm{Hom}_{\DbRepQ}(x,y)$  for every vertex $y$ on the section $Q_x$ of $\ZQ$,
        \item $ \widetilde{h_x} = \delta_x$,
    \end{enumerate}
   where $\delta_x$ denotes the function given by $\delta_x(y) := 1$ if $x=y$ and $0$ otherwise. For a given $x \in \ZQz$, the function $h_x$ should be viewed as a semi-infinite version of the \textbf{hammock functions} considered by Gabriel \cite{Gabriel} and Brenner \cite{Brenner}. With the notations above, a function is additive in the sense of \cite{Gabriel} if $\widetilde{f}=0$, hence the terminology "quasi-additive". 

\begin{example} \label{ex : quasi-additive function}
Consider the Dynkin quiver $Q$ of type $A_4$ given by $1 \rightarrow 2 \rightarrow 3 \leftarrow 4$. We picture below the function $h_x$ where $x$ is the vertex of $\ZQ$ outlined in red. 

 $$ \xymatrixrowsep{1pc} \xymatrixcolsep{1pc} \xymatrix{  
   {} &  \cdots   & 0 \ar[rd]   &    {}   &    1  \ar@{.}[ll] \ar[rd]   & {}  & 0 \ar@{.}[ll] \ar[rd] & {} & -1 \ar@{.}[ll]   \\
    \cdots    &  0 \ar[ru] \ar[rd]   &    {}      &  1 \ar@{.}[ll]  \ar[ru]  \ar[rd]  &{}  &  1 \ar@{.}[ll] \ar[ru]  \ar[rd] & {} & -1 \ar@{.}[ll]  \ar[ru] & \cdots \\
      0 \ar[ru] \ar[rd]  &     {}     &    \textcolor{red}{1} \ar[ru]  \ar@{.}[ll]   \ar[rd]   & {} & 1 \ar@{.}[ll]  \ar[rd] \ar[ru] & {} & 0 \ar[ru] \ar@{.}[ll] \ar[rd] & \cdots  \\
    \cdots   &   0  \ar[ru]   &  {}       &   1 \ar@{.}[ll]  \ar[ru]   & {} & 0  \ar@{.}[ll]   \ar[ru]& {} & 0 \ar@{.}[ll] & \cdots 
    } $$

\end{example}
   
 We now prove a couple of important  properties of these functions.

  \begin{lem} \label{lem : linear independece of hammock functions}
      The functions $h_x , x \in \ZQz$ are linearly independent.
  \end{lem}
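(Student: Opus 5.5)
The plan is to exploit the defining property $\widetilde{h_x} = \delta_x$ together with the linearity of the operator $f \mapsto \widetilde{f}$. From the definition of quasi-additivity, $\widetilde{f}$ is the defect of additivity in the sense of Gabriel. Concretely, for a mesh ending at $y$ it should be
\[
\widetilde{f}(y) = f(y) + f(\tau y) - \sum_{z \to y} f(z).
\]
The precise convention does not matter; all that is needed is that $\widetilde{f}$ is a fixed $\mathbb{Z}$-linear combination of values of $f$ at finitely many vertices near $y$. The key point is therefore that $f \mapsto \widetilde{f}$ is a $\mathbb{Z}$-linear map (indeed $\mathbf{k}$- or $\mathbb{Q}$-linear after extending scalars) on the space of all functions $\ZQz \to \mathbb{Z}$.

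The argument then runs as follows. Suppose there is a finite linear relation
\[
\sum_{x \in S} \lambda_x h_x = 0,
\]
where $S \subset \ZQz$ is finite and the coefficients $\lambda_x$ are integers (or rationals). Apply the operator $\widetilde{\ \cdot\ }$ to this relation. Linearity gives
\[
\sum_{x \in S} \lambda_x \widetilde{h_x} = 0,
\]
and since $\widetilde{h_x} = \delta_x$ this becomes
\[
\sum_{x \in S} \lambda_x \delta_x = 0.
\]
Evaluating this identity at each $y \in S$ gives $\lambda_y = 0$, which proves that the $h_x$ are linearly independent.

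The only step requiring any care is confirming that the tilde operation is genuinely linear, that is, that its definition does not involve normalisations depending on $f$. I expect this to be immediate from the mesh-type formula recalled just before Definition~\ref{def : quasi-additive functions}. The well-definedness and uniqueness of $h_x$ are assumed, as stated in the text; in any case they play no role in the independence argument itself, which uses only the property $\widetilde{h_x} = \delta_x$.
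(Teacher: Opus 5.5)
Your proof is correct, and it takes a different and more economical route than the paper's.

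\textbf{The paper's route.} The paper argues by triangularity. Given $a_1h_{x_1}+\cdots+a_rh_{x_r}=0$, it picks a ``leftmost'' vertex $x_1$ and evaluates the relation at $x_1$. Using $h_{x_1}(x_1)=1$ and $h_{x_k}(x_1)=0$ for $k>1$, it gets $a_1=0$ and then inducts. This needs a vanishing property of the hammock functions: $h_{x_k}$ must vanish on all sections strictly to the left of $Q_{x_k}$, and at the vertices of $Q_{x_k}$ not reachable from $x_k$. That property comes from requirement (1) together with propagating the mesh relations leftward. The paper asserts it ``by definition'' without proof. It also takes some care to make ``leftmost'' precise when several $x_k$ lie on the same section.

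\textbf{Your route.} You use only requirement (2) and the linearity of $f\mapsto\widetilde f$. So you need neither the actual values of $h_x$, nor any ordering, nor an induction. Any family of functions with $\widetilde{h_x}=\delta_x$ is independent, whatever their supports. Your argument also gives directly the recovery formula $h=\sum_x\widetilde h(x)\,h_x$ for any $h$ in the span. This is exactly what Remark~\ref{rk on dominant objects} relies on.

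\textbf{A minor correction.} The paper never actually displays the formula for $\widetilde f$, so nothing is ``recalled just before'' the definition. However, your convention $\widetilde f(y)=f(y)+f(\tau y)-\sum_{z\to y}f(z)$ is the one forced by the identity $\widetilde{\delta_x}=\delta_x+\delta_{\tau^{-1}x}-\sum_{x\to y}\delta_y$ in the proof of Lemma~\ref{lem : mutation for functions}. That proof already uses the linearity of the tilde operation, so your reliance on it is justified.
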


 \begin{proof}
     Assume $r \geq 1$ and  $a_1, \ldots , a_r \in \mathbb{Z}$  and $x_1, \ldots , x_r \in \ZQz$ are such that $a_1 h_{x_1} + \cdots + a_r h_{x_r} = 0$. We can choose $i$ such that $\tau^{-1}x_i$ is to the left of the other $\tau^{-1}x_j$. Wlog $i=1$. Hence we have $h_{\tau^{-1}x_k}(x_1) = 0$ for all $k>1$ and $h_{x_1}(x_1) = 1$ by definition of the functions $h_x$. This implies $a_1=0$. The lemma thus follows by a straightforward induction. 
  \end{proof}

  \begin{lem} \label{lem : mutation for functions}
   For every $x \in \ZQz$ we have that 
   $$ h_x + h_{\tau^{-1}x} = \delta_x + \sum_{x \rightarrow y} h_y .  $$
  \end{lem}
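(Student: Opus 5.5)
The plan is to prove the identity by comparing mesh defects and then using the uniqueness built into the definition of $h_x$. Set
$$ g := h_x + h_{\tau^{-1}x} - \sum_{x \rightarrow y} h_y - \delta_x . $$
I will show that $\widetilde{g}=0$, so that $g$ is additive in Gabriel's sense. I will then show that $g$ vanishes on one full section of $Q$ in $\ZQ$. Since an additive function on $\ZQ$ is determined by its values on any section (propagate through the meshes in both directions), this forces $g=0$.

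\emph{Step 1: the defect of $g$.} I use the mesh-defect convention $\widetilde{f}(z) = f(z)+f(\tau z)-\sum_{w \rightarrow z} f(w)$. This is the convention under which the statement is consistent, and I will double-check it against the definition preceding Definition~\ref{def : quasi-additive functions}. Since $f \mapsto \widetilde{f}$ is linear and $\widetilde{h_u}=\delta_u$ for every $u$, the first three terms of $g$ contribute $\delta_x+\delta_{\tau^{-1}x}-\sum_{x\rightarrow y}\delta_y$. A direct evaluation gives
$$ \widetilde{\delta_x}(z) = [z=x]+[\tau z = x]-[x \rightarrow z], $$
that is, $\widetilde{\delta_x}=\delta_x+\delta_{\tau^{-1}x}-\sum_{x\rightarrow y}\delta_y$. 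These two expressions coincide, so $\widetilde{g}=0$.

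\emph{Step 2: vanishing on a section.} I choose the section $S:=Q_{\tau^{-1}x}$ and compute each term of $g$ there.
\begin{itemize}
\item By definition, $h_{\tau^{-1}x}$ restricted to $S$ is $\dim\mathrm{Hom}_{\DbRepQ}(\tau^{-1}x,-)$.
\item For $h_x$ and for the $h_y$ with $x\rightarrow y$, the sections $Q_x$ and $Q_y$ sit at or just before $S$. I first show that $h_u(z)=\dim\mathrm{Hom}_{\DbRepQ}(u,z)$ for all $z$ on sections between $Q_u$ and $Q_{\tau^{-1}u}$ inclusive. This comes from applying $\mathrm{Hom}(u,-)$ to the Auslander--Reiten triangles $\tau z\rightarrow E_z\rightarrow z\rightarrow \Sigma\tau z$ in $\DbRepQ$. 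The resulting sequence $0\to\mathrm{Hom}(u,\tau z)\to\mathrm{Hom}(u,E_z)\to\mathrm{Hom}(u,z)$ is exact, and it is right exact unless $z\simeq u$ (the radical defect) or $\mathrm{Hom}(u,\Sigma\tau z)\neq 0$. By the Serre duality~\eqref{eq : isom def of Serre functor}, the latter condition is $D\mathrm{Hom}(z,\Sigma u)\neq 0$, which cannot happen for $z$ in this narrow window. So $\dim\mathrm{Hom}(u,-)$ has mesh defect exactly $\delta_u$ on the window and agrees with $h_u$ on $Q_u$. Propagating the mesh relations from $Q_u$ then gives equality on the whole window.
\item On $S$ the term $\delta_x$ vanishes, since $x\notin S$ (unless $x$ is a sink-type vertex of $S$; that case is handled in the same way after shifting $S$ to $Q_x$).
\end{itemize}
With these identifications, $g|_S$ becomes
$$ \dim\mathrm{Hom}(x,-)+\dim\mathrm{Hom}(\tau^{-1}x,-)-\sum_{x\rightarrow y}\dim\mathrm{Hom}(y,-) $$
on $S$. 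Its vanishing follows from applying $\mathrm{Hom}(-,z)$ to the Auslander--Reiten triangle $x\rightarrow\bigoplus_{x\rightarrow y}y\rightarrow\tau^{-1}x\rightarrow\Sigma x$. The long exact sequence is short exact except where $z\simeq\tau^{-1}x$ (the defect) or where $\mathrm{Hom}(\Sigma x,z)$ or $\mathrm{Hom}(\tau^{-1}x,\Sigma^{-1}z)$ is nonzero. The shift terms vanish for $z$ in a section adjacent to $x$. At $z=\tau^{-1}x$ the defect has the wrong sign to cancel as written, so I must check carefully whether it is absorbed by the $\delta$ bookkeeping. If it is not, I will instead evaluate on $Q_x$, where the only defect sits at $z=x$ and is exactly $\delta_x$.

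\emph{Main obstacle.} Everything rests on Step 2: matching the combinatorially defined $h_u$ with $\dim\mathrm{Hom}(u,-)$ on a whole section slightly beyond $Q_u$, and placing the defect at the correct vertex. My first attempt will be to evaluate on $Q_x$ directly. There the sign of the defect at $x$ matches $\delta_x$, and $h_{\tau^{-1}x}$ and the $h_y$ are handled through the mesh relations between $Q_x$ and $Q_{\tau^{-1}x}$.
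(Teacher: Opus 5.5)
Your overall plan matches the paper's. Step~1 is exactly the paper's computation of $\widetilde{\delta_x}$, and your inferred mesh convention is the one that computation uses. The paper then concludes by checking that $g$ vanishes on the section through $x$, which is the $Q_x$ evaluation you mention only at the end. The route through $S=Q_{\tau^{-1}x}$ that Step~2 actually develops does not work as written, for two concrete reasons.

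\emph{First}, the mesh defect of $\dim\mathrm{Hom}(u,-)$ is $\delta_u+\delta_{\Sigma u}$, not $\delta_u$.
Apply $\mathrm{Hom}(u,-)$ to the triangle ending at $z$.
\begin{itemize}
\item Surjectivity onto $\mathrm{Hom}(u,z)$ fails only at $z\simeq u$.
\item Injectivity of $\mathrm{Hom}(u,\tau z)\rightarrow\mathrm{Hom}(u,E_z)$ fails at $z\simeq\Sigma u$, because its kernel is the image of $\mathrm{Hom}(u,\Sigma^{-1}z)$.
\item Your Serre-duality step is also off: $\mathrm{Hom}(u,\Sigma\tau z)=\mathrm{Hom}(u,Sz)\simeq D\mathrm{Hom}(z,u)$, not $D\mathrm{Hom}(z,\Sigma u)$.
\end{itemize}
Moreover, $\Sigma u=\tau^{-1}Su$ lies on $Q_{\tau^{-1}u}$ whenever $Su\in Q_u$. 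For $Q=A_1$ one even has $\tau^{-1}u=\Sigma u$. At that vertex $h_u(\Sigma u)=-1\neq 0=\dim\mathrm{Hom}(u,\Sigma u)$ (compare the negative values in Example~\ref{ex : quasi-additive function}). So your ``window'' identification fails on $Q_{\tau^{-1}u}$.

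\emph{Second}, apply $\mathrm{Hom}(-,z)$ to $x\rightarrow\bigoplus_{x\rightarrow y}y\rightarrow\tau^{-1}x\rightarrow\Sigma x$. The resulting sequence is short exact for $z\not\simeq x,\Sigma x$; this is exactly what the proof of Proposition~\ref{prop : mutation for objects in CQ} establishes. So the defects sit at $z\simeq x$ and $z\simeq\Sigma x$, not at $\tau^{-1}x$. That misplacement is where your sign puzzle comes from. On $S$ the identity does hold, but only because the $\delta_{\Sigma x}$ coming from the triangle is cancelled by $h_x(\Sigma x)=-1$, which your bookkeeping does not see. For $Q=A_1$ your identifications would give $g(\tau^{-1}x)=1$. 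Also, $x$ never lies on $Q_{\tau^{-1}x}$, so your parenthetical about a sink-type vertex is moot.

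On $Q_x$ everything is clean, and this is the argument to commit to.
\begin{itemize}
\item $h_x$ and the $h_y$ with $y\in Q_x$ equal the Hom-dimensions on $Q_x$ by definition.
\item $h_{\tau^{-1}x}$ and the $h_y$ with $y\in Q_{\tau^{-1}x}$ vanish on $Q_x$. Propagating the meshes backwards from $Q_u$ uses only the defect at $u$. The function $\dim\mathrm{Hom}(u,-)$ shares that defect, since $\Sigma u$ lies strictly to the right of $Q_u$ (or outside $\ZQ$). And $\mathrm{Hom}(u,-)$ vanishes on earlier sections.
\item $\Sigma x\notin Q_x$, so the triangle gives $g_{\mid Q_x}=0$, with the defect at $x$ absorbed by $\delta_x$.
\end{itemize}
Phrased through the Auslander--Reiten triangle, your version is slightly more robust than the paper's wording. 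The paper asserts $\dim\mathrm{Hom}(x,z)=1$ on that section, which tacitly assumes paths between vertices are unique.
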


   \begin{proof}
     Let us fix $x \in \ZQz$. It is then straightforward to check that 
     $$ \widetilde{\delta_x} = \delta_x + \delta_{\tau^{-1}x} - \sum_{x \rightarrow y} \delta_y  . $$
    It follows from this that the function 
    \begin{equation} \label{eq : aux function}
         h_x + h_{\tau^{-1}x} - \sum_{x \rightarrow y} h_y - \delta_x
     \end{equation}
     is additive and hence is entirely determined by its values on any section of $\ZQ$ (cf. \cite{Gabriel}). Let us consider the section starting at $x$. For any $z$ on that section, we have that $\dim_{\mathbf{k}} \mathrm{Hom}(x,z) =1$  so that $h_x(z) =1$ and $\mathrm{Ext}^1(z,x) = 0 $ which implies that $h_{\tau^{-1}x}(z) = 0$ using the Auslander-Reiten formula. On the other hand, $z$ belongs to exactly one of the sections starting at each $y$ such that there is an arrow $x \rightarrow y$ in $\ZQ$, unless $z$ is $x$ itself. In other words we have that $ \sum_{x \rightarrow y}h_y (z) = 1 $ if $z \neq x$, and $0$ if $z=x$. Putting all of this together we see that the function~\eqref{eq : aux function} vanishes on all the vertices of the section starting at $x$. The lemma is proved. 
   \end{proof}

 \subsection{The category $\MQ$}
  \label{sec : category MQ}
 
 We denote by $\IQ$ the set of all isomorphism classes of indecomposable objects in $\DbRepQ$. Recall that a multiset of elements of $\IQ$ is a collection $\{\tau^{-1}x_k , k \in K\}$ where $K$ is a set and $\tau^{-1}x_k$ is an indecomposable object in $\DQ$ for each $k \in K$ (equivalently, it can be viewed as a  function from $K$ to $\IQ$). Note that such a function shall usually not be injective; in other words, any given indecomposable object  in $\DbRepQ$ may appear several times (up to isomorphism) in the same collection (but only finitely many times !). In this work, we will exclusively be considering at most countable  multisets, i.e.  functions from a finite or  countable set $K$ to $\IQ$. 
  Given two multisets $X$ and $Y$ we can define their intersection $X \cap Y$ as the largest multiset contained in both $X$ and $Y$. 
 
 We define a $\mathbf{k}$-category $\tMQ$ as follows. The class of objects of $\tMQ$ consists of all pairs $(X,h)$ where $X$ is an at most countable multiset of elements of $\IQ$ and $h$ is a quasi-additive function on $\ZQz$. We then define the notion of Serre tilting of such pairs as follows. Recall that $S$ denotes the Serre functor $S = \tau \Sigma = \Sigma \tau$ in the category $\DbRepQ$. 
 
   \begin{deftn} \label{def : Serre tiltings}
 Given any indecomposable object $(X,h)$ in $\tMQ$ and any \textit{finite} subset $Y = \{ y_1 , \ldots  , y_r\} \subseteq X$, we define the \textbf{Serre tilting} of $(X,h)$ at $Y$ as the object of $\tMQ$ given by  
 $$ \mu_Y(X,h) := \left(  (X \setminus Y) \sqcup SY \enspace , \enspace  h - \sum_{y \in Y} \delta_y \right) $$
 where $SY$ stands for $SY := \{S y_1, \ldots , S y_r\}$.  For simplicity we will write $\mu_y(X)$ for $\mu_{\{y \}}(X)$.
   \end{deftn}
  
   We now define the class of morphisms of the category $\tMQ$. 
 Given two at most countable sets $K$ and $L$, we denote by $\mathrm{Bij}(K,L)$ the set of all bijections from $K$ to $L$. In particular $\mathrm{Bij}(K,L) = \emptyset$ if $K$ and $L$ are finite of different cardinalities. Moreover for any function $\sigma : K \rightarrow L$, we set 
  $$  \mathrm{Supp}_{X,Y}(\sigma) := \{k \in K \mid y_{\sigma(k)} \neq \tau^{-1}x_k \} .  $$
  Given two multisets $X = \{\tau^{-1}x_k, k \in  K\}$ and $Y = \{y_l , l \in L\}$ and two  quasi-additive functions $f$ and $g$ on $\ZQz$, we then define the morphism space from $(X,f)$ to $(Y,g)$ in $\widetilde{\MQ}$ as follows:
   \begin{equation} \label{eq : def of morphisms in CQ}
      \mathrm{Hom}_{\tMQ} \left( (X,f),(Y,g) \right)  := 
       \bigoplus_{  \substack{ \sigma \in \mathrm{Bij}(K,L)  \\  \sharp \mathrm{Supp}_{X,Y}(\sigma) < \infty }}  \bigotimes_{k \in \mathrm{Supp}_{X,Y}(\sigma)} \mathrm{Hom}_{\DQ}(\tau^{-1}x_k , y_{\sigma(k)})  
  \end{equation}
   Note that each summand on the right hand side is a tensor product of finitely many $\mathbf{k}$-vector spaces (as $\DbRepQ$ is a $\mathbf{k}$-category) so that $\tMQ$ is a $\mathbf{k}$-category.

    \begin{rk}
        In most of what follows we will be considering only the case where $X$ and $Y$ are finite with same cardinality so that
   $$ \mathrm{Hom}_{\tMQ} \left( (X,f),(Y,g) \right) := \bigoplus_{ \sigma \in \mathrm{Bij}(K,L)} \bigotimes_{k \in K} \mathrm{Hom}_{\DQ}(\tau^{-1}x_k , y_{\sigma(k)}) .  $$
    \end{rk}


 The category $\tMQ$ can be endowed with a symmetric monoidal structure as follows.   
   Given two  objects $(X,f)$ and $(Y,g)$ in $\tMQ$ with $X = \{\tau^{-1}x_k , k \in K\}$ and $Y = \{y_l , l \in L\}$, we set 
\begin{equation} \label{eq : monoidal structure on CQ}
 (X,f) \otimes (Y,g) :=  \left( X \sqcup Y , f + g \right) .  
\end{equation}   
   where $X \sqcup Y$ is the multiset defined by
   $$ X \sqcup Y := \{z_p , p \in K \sqcup L \} , \enspace z_p := \begin{cases}
       \tau^{-1}x_p & \text{if $p \in K$,} \\
       y_p & \text{if $p \in L$. }
   \end{cases} 
   $$
Moreover, given two non-trivial morphisms $\varphi_1 : (X_1,f_1) \rightarrow (Y_1,g_1) $ and $\varphi_2 : (X_2,f_2) \rightarrow (Y_2,g_2)$ in $\tMQ$, we  get a canonical non-trivial morphism $\varphi_1 \otimes \varphi_2 : (X_1 , f_1) \otimes (X_2,f_2) \rightarrow (Y_1,g_1) \otimes (Y_2,g_2)$ in $\tMQ$ as $(Y_1 \sqcup Y_2, g_1 + g_2) = \mu_{Z_1 \sqcup Z_2} (X_1 \sqcup X_2 , f_1 + f_2)$ where $Z_1$ and $Z_2$ are such that $(Y_i,g_i) = \mu_{Z_i}(\tau^{-1}x_i,f_i)$ for each $i \in \{1,2\}$. It is straightforward to check that this construction is natural and hence~\eqref{eq : monoidal structure on CQ} endows $\tMQ$ with a symmetric monoidal structure. 
 Finally, we denote by $\MQ$ the additive envelope of $\tMQ$. It follows from what precedes that $\MQ$ is a $\mathbf{k}$-linear symmetric monoidal category.

\begin{rk} \label{rk : indecomposability}
   We  believe the objects of $\tMQ$ can be shown to be  indecomposable in $\MQ$. We can prove it in the case where $Q$ is a Dynkin quiver but do not know how to extend this to arbitrary acyclic quivers.   
\end{rk}

\subsection{The category $\HZQ$} \label{sec : category CQ and KQ}

 We now introduce a certain full monoidal category $\HZQ$ of $\MQ$ that we call the hammock category. We begin by defining certain distinguished objects (that we will refer to as  hammock objects) in $\MQ$ that will be playing a crucial role in what follows.  For every  $x$ in $\IQ$, we define the $\textbf{hammock multiset}$ associated to $x$  as the multiset $H(x)$ of elements of $\IQ$ given by
  \begin{equation} \label{eq : def of hammock multisets}
       H(x) := \bigsqcup_{y \in \IQ} \bigsqcup_{m=1}^{\dim_{\mathbf{k}} \mathrm{Hom}(x,y)} \{y  \} .     
  \end{equation}
  
  In other words, $H(x)$ is the multiset containing $m_x(y)$ copies of each $y \in \IQ$, where $m_x(y) := \dim_{\mathbf{k}} \mathrm{Hom} (x,y)$ (in $\DbRepQ$) for each $y \in \IQ$.  Also, for every $x \in \IQ$, we define the \textbf{hammock object} associated to $x$ as the indecomposable object $Y(x)$ of $\MQ$ given by
    \begin{equation} \label{eq : def of hammock objects}
     Y(x) := \left(  H(x) , h_x \right) .   
    \end{equation}

\begin{example}
 Consider $Q$  a sink-source oriented quiver of type $D_4$, with the trivalent node labeled by $2$ and the monovalent nodes by $1,3,4$. Then the function $h_x$ associated to a vertex $x$ of color $2$ (outlined in red of the picture) looks like 
 $$
 \xymatrixrowsep{1pc} \xymatrixcolsep{1pc} \xymatrix{  
   \cdots   & 0 \ar[rdd]     & {} &   1  \ar@{.}[ll] \ar[rdd]   & {}  & 1 \ar@{.}[ll] \ar[rdd] & {} & 0 \ar[rdd] \ar@{.}[ll]  &  \cdots  & {}   \\
    \cdots    &  0  \ar[rd]   &    {}      &  1 \ar@{.}[ll]   \ar[rd]  &{}  &  1 \ar@{.}[ll]   \ar[rd] & {} & 0 \ar[rd] \ar@{.}[ll]  &  \cdots  & {}  \\
      0 \ar[ru] \ar[ruu] \ar[rd]  &     {}     &    \textcolor{red}{1} \ar[ru]  \ar[ruu] \ar@{.}[ll]   \ar[rd]   & {} & 2 \ar@{.}[ll]  \ar[rd] \ar[ru] \ar[ruu] & {} & 1 \ar[ru]  \ar[ruu] \ar@{.}[ll] \ar[rd] & {} & -1 \ar@{.}[ll] \\
    \cdots   &   0  \ar[ru]   &  {}       &   1 \ar@{.}[ll]  \ar[ru]   & {} & 1 \ar@{.}[ll]   \ar[ru]& {} & 0 \ar@{.}[ll]\ar[ru] & \cdots  & {}
    } 
$$
 and the hammock multiset $H(x)$ is given by $$H(x) = \{x,y,z,t, \tau^{-1}x,\tau^{-1}x,\tau^{-1}y, \tau^{-1}z, \tau^{-1}t,\tau^{-2}x\}$$
 where $y,z,t$ denote the targets of the arrows of source $x$ in $\ZQ$. 
 We also refer to Figure~\ref{fig : hammocks} below for other examples of hammock multisets.
\end{example}

 \begin{example} \label{ex : morphisms in HQ}
     It will be convenient to represent the morphisms in $\MQ$ as braid diagrams. This way, composing two morphisms amounts to put the corresponding diagrams on top of each other, while the monoidal structure boils down to putting the two diagrams next to each other. For example, consider the quiver  $Q = 1 \rightarrow 2 \leftarrow 3$. We can depict $\DbRepQ$ as follows. 
     $$
 \xymatrixrowsep{1pc} \xymatrixcolsep{1pc} \xymatrix{  
    \cdots    &  P_1  \ar[rd]^g   &    {}      &  I_3 \ar@{.}[ll]   \ar[rd]  &{}  &  \Sigma P_3 \ar@{.}[ll]   \\
      P_2 \ar[ru]^f \ar[rd]_h  &     {}     &   I_2 \ar[ru]   \ar@{.}[ll]   \ar[rd]   & {} & \Sigma P_2 \ar@{.}[ll]   \ar[ru] \ar[rd]  & \cdots  \\
    \cdots   &   P_3  \ar[ru]_{i}   &  {}       &   I_1 \ar@{.}[ll]  \ar[ru]   & {} & \Sigma P_1 \ar@{.}[ll] 
    } 
$$
 Then consider for example the object $Y(P_2)$, whose multiset part is given by $H(P_2) = \{P_2,P_1,P_3,I_2\}$. We provide a couple of examples of morphisms in $\mathrm{Hom}_{\MQ}(Y(P_2), \mu_{P_2}Y(P_2))$ using this diagrammatic description. Note that $\mu_{P_2}H(P_2) = \{I_2,P_1,P_3,I_2\}$. 

  \begin{center}

 \begin{tikzpicture}[scale=0.8]
\node (a1) at (0,2) {$P_2$};
\node (b1) at (1.5,2) {$P_1$};
\node (c1) at (3,2) {$P_3$};
\node (d1) at (4.5,2) {$I_2$};

\node (a2) at (10,2) {$P_2$};
\node (b2) at (11.5,2) {$P_1$};
\node (c2) at (13,2) {$P_3$};
\node (d2) at (14.5,2) {$I_2$};

\node (x1) at (0,0) {$I_2$};
\node (y1) at (1.5,0) {$P_1$};
\node (z1) at (3,0) {$P_3$};
\node (t1) at (4.5,0) {$I_2$};

\node (x2) at (10,0) {$I_2$};
\node (y2) at (11.5,0) {$P_1$};
\node (z2) at (13,0) {$P_3$};
\node (t2) at (14.5,0) {$I_2$};

\draw[thick,->] (a1)--(y1) node[pos=0.3,above,font=\tiny] {$f$}    ;
\draw[thick,->] (b1)--(x1)  node[pos=0.7,below,font=\tiny] {$g$} ;
\draw[thick,double distance = 2pt] (c1)--(z1);
\draw[thick,double distance = 2pt] (d1)--(t1);

\draw[thick,->] (a2)--(x2) node[pos=0.3,left,font=\tiny] {$gf$} ;
\draw[thick,double distance = 2pt] (b2)--(y2) ;
\draw[thick,double distance = 2pt] (c2)--(z2);
\draw[thick,double distance = 2pt] (d2)--(t2);

 \end{tikzpicture}

  \end{center}
 
 \end{example}

   We will also need the following (indecomposable) objects in $\MQ$:
\begin{equation} \label{eq : def of F objects}
     \forall x \in \IQ, \enspace F(x) :=  \left( \{ Sx , \Sigma x \} , 0 \right) . 
\end{equation}

We then define the category $\HZQ$ as the smallest additive monoidal full subcategory of $\MQ$ whose class of objects contains $F(x)$ and $H(x)$ for each $x \in \IQ$ and is stable under Serre tiltings. 
The pairs of the form $(X_1,f_1) \otimes \cdots \otimes (X_r,f_r)$ where for each $k$, $(X_k,f_k)$ is of the form $Y(\tau^{-1}x_k)$  for some $x_k \in \ZQz$ will be called \textbf{dominant objects}.

 \begin{rk} \label{rk on dominant objects}
    It follows from Lemma~\ref{lem : linear independece of hammock functions} that a dominant object is entirely determined (up to isomorphism) by its quasi-additive function. Indeed, if $(X,h)$ is dominant then $\widetilde{h}$ takes only non negative values on $\ZQz$, and $X$ is given by 
    $$ X = \bigsqcup_{z \in \ZQz} H(z)^{\sqcup \widetilde{h}(z)} .  $$
     We will say that $h$ is dominant if $\widetilde{h}(z) \geq 0$ for all $z \in \ZQz$. 
 \end{rk}

Though its proof is relatively simple, the following statement is one of the crucial features of the construction proposed in this paper. From a strictly technical perspective, it is the main reason why performing Serre tiltings on hammock objects will eventually lead to strong connections with the theory of $q$-characters. We refer to Remark~\ref{rk : Serre tiltings vs Chari's action} below for further comments. 

 \begin{prop} \label{prop : mutation for objects in CQ}
     For each $x \in \IQ$ we have that 
     $$ \mu_xY(x) \otimes Y(\tau^{-1}x) \simeq  F(x) \otimes \bigotimes_{x \rightarrow z} Y(z) . $$
 \end{prop}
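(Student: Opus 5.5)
The plan is to show that the two objects in the statement are literally equal as pairs (multiset, quasi-additive function). The isomorphism can then be taken to be the identity morphism, i.e. the summand of \eqref{eq : def of morphisms in CQ} indexed by a bijection with empty support. Write $E := \bigoplus_{x \rightarrow z} z$, so that $x \rightarrow E \rightarrow \tau^{-1}x \xrightarrow{w} \Sigma x$ is the Auslander-Reiten triangle in $\DbRepQ$. The proof splits into the function component and the multiset component.

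\textbf{Function component.} By Definition~\ref{def : Serre tiltings} and \eqref{eq : monoidal structure on CQ}, the left hand side carries $h_x - \delta_x + h_{\tau^{-1}x}$. The right hand side carries $0 + \sum_{x \rightarrow z} h_z$, since $F(x)$ has zero function by \eqref{eq : def of F objects}. These agree exactly by Lemma~\ref{lem : mutation for functions}.

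\textbf{Multiset component.} Since $\mathrm{End}(x) = \mathbf{k}$ (indecomposables in $\DbRepQ$ of a hereditary algebra with no multiple edges are bricks here), $x$ occurs exactly once in $H(x)$. So $\mu_x H(x) = (H(x)\setminus\{x\}) \sqcup \{Sx\}$. After cancelling $Sx$ on both sides, it suffices to prove that for every $y \in \IQ$
\begin{equation*}
\dim \mathrm{Hom}(x,y) + \dim \mathrm{Hom}(\tau^{-1}x,y) - \dim \mathrm{Hom}(E,y) = \delta_{x,y} + \delta_{\Sigma x , y}.
\end{equation*}

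I would apply $\mathrm{Hom}(-,y)$ to the Auslander-Reiten triangle. This gives the exact sequence
\begin{equation*}
\mathrm{Hom}(\Sigma E,y) \to \mathrm{Hom}(\Sigma x,y) \xrightarrow{w^*} \mathrm{Hom}(\tau^{-1}x,y) \to \mathrm{Hom}(E,y) \to \mathrm{Hom}(x,y) \to \mathrm{Hom}(\Sigma^{-1}\tau^{-1}x,y).
\end{equation*}
Two standard Auslander-Reiten facts control the ends of this sequence.
\begin{enumerate}
\item Since $x \to E$ is left almost split, the cokernel of $\mathrm{Hom}(E,y) \to \mathrm{Hom}(x,y)$ is $\mathbf{k}$ if $y \simeq x$ and $0$ otherwise.
\item Since $\tau^{-1}x \xrightarrow{w} \Sigma x$ is the third morphism of an Auslander-Reiten triangle, $\Sigma E \to \Sigma x$ is right almost split. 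Hence the cokernel of $\mathrm{Hom}(\Sigma E,y)\to \mathrm{Hom}(\Sigma x,y)$, which is the image of $w^*$, is $\mathbf{k}$ if $y\simeq \Sigma x$ and $0$ otherwise.
\end{enumerate}
Taking the alternating sum of dimensions along the truncated exact sequence
\begin{equation*}
0 \to \mathrm{Im}(w^*) \to \mathrm{Hom}(\tau^{-1}x,y) \to \mathrm{Hom}(E,y) \to \mathrm{Hom}(x,y) \to \mathrm{Coker} \to 0
\end{equation*}
then yields precisely the displayed identity.

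I expect the main obstacle to be fact (2): justifying that the image of $w^*$ is one-dimensional exactly at $y \simeq \Sigma x$ and zero elsewhere, rather than merely that $w^*$ is "almost" injective. I would handle it via the rotated triangle $\Sigma x \to \Sigma E \to \Sigma\tau^{-1}x \to \Sigma^2 x$, which is again an Auslander-Reiten triangle because $\Sigma$ is an autoequivalence, so that $\Sigma E \to \Sigma x$ is indeed right almost split. As a sanity check, at $y=\Sigma x$ we have $\mathrm{Hom}(\tau^{-1}x,\Sigma x) \simeq D\mathrm{End}(x)=\mathbf{k}$ by \eqref{eq : isom def of Serre functor}, and $w \neq 0$.

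Finally, I would note that all multisets involved are finite in each $y$ (hammocks are locally finite), so the multiset equality holds elementwise. The resulting equality of pairs gives the asserted isomorphism in $\MQ$.
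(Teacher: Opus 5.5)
Your proof is correct. It follows the paper's overall reduction: the function part comes from Lemma~\ref{lem : mutation for functions}, and the multiset part comes from applying $\mathrm{Hom}(-,y)$ to the Auslander--Reiten triangle. The multiplicity count, however, is organized differently.

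The paper proves that $0\to\mathrm{Hom}(\tau^{-1}x,y)\to\mathrm{Hom}(E,y)\to\mathrm{Hom}(x,y)\to 0$ is short exact for $y\not\simeq x,\Sigma x$. It gets injectivity by a Serre-duality argument: a nonzero $f$ with $f\pi=0$ pairs nontrivially with some $\bar f:S^{-1}y\to\tau^{-1}x$, which must factor through $\pi$. It then treats $y=x$ and $y=\Sigma x$ by a direct count, using $\dim\mathrm{End}(x)=1=\dim\mathrm{Hom}(\tau^{-1}x,\Sigma x)$.

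You instead keep the whole long exact sequence and identify the two end defects as tops of local endomorphism rings, via left almost splitness of $\iota$ and of its shift. Injectivity of $\pi^*$ away from $\Sigma x$ then comes for free from $(\Sigma\iota)\circ w=0$, with no appeal to Serre duality. What this buys is uniformity: one identity holds for every indecomposable $y$. It absorbs the vanishing $\mathrm{Hom}(z,x)=0=\mathrm{Hom}(z,\Sigma x)$ for $x\to z$, which the paper's special-case count uses without stating it. It also does not depend on $x$ being a brick. The paper's version, for its part, records the slightly sharper information that the three-term sequence is genuinely short exact for generic $y$.

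Two small corrections to your write-up.

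First, in (2) and in your final paragraph you say $\Sigma E\to\Sigma x$ is right almost split. There is no such map here. The property you use is that $\Sigma\iota:\Sigma x\to\Sigma E$ is \emph{left} almost split, being the first map of the shifted Auslander--Reiten triangle $\Sigma x\to\Sigma E\to\Sigma\tau^{-1}x\to\Sigma^2x$. Your cokernel computation is then exactly fact (1) transported by the autoequivalence $\Sigma$, so the step you flagged as the main obstacle is not really one.

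Second, the parenthetical claim that indecomposables of $\DbRepQ$ are bricks is false for non-Dynkin $Q$. For example, regular modules of quasi-length $\geq 2$ in a homogeneous tube of a Euclidean quiver have non-trivial radical endomorphisms. The claim does hold for the $x$ at hand, since $x$ is a vertex of $\ZQ$ and hence exceptional. But your argument never needs it: $\mu_x$ removes one copy of $x$ by definition, and $\mathrm{End}(x)/\mathrm{rad}\,\mathrm{End}(x)\simeq\mathbf{k}$ for any indecomposable $x$ since $\mathbf{k}$ is algebraically closed.
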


  \begin{proof}
  In view of Lemma~\ref{lem : mutation for functions},  the desired statement is equivalent to  the following identity of multisets 
  \begin{equation} \label{eq : identity of multisets}
  H(x) \sqcup H(\tau^{-1}x) = \{x , \Sigma x \} \sqcup \bigsqcup_{x \rightarrow z} H(z) . 
  \end{equation}
  This will follow from the fact that $x \xrightarrow{\iota} \bigoplus_{x \rightarrow z} z \xrightarrow{\pi} \tau^{-1}x \rightarrow \Sigma x$ is an Auslander-Reiten triangle in $\DbRepQ$. Indeed, let $y \in \IQ$ and assume $y$ is not isomorphic neither to $x$  nor $\Sigma x$. We are going to check that the sequence
  $$ 0 \longrightarrow \mathrm{Hom}(\tau^{-1}x,y) \longrightarrow \bigoplus_{x \rightarrow z} \mathrm{Hom}(z,y) \longrightarrow \mathrm{Hom}(x,y) \longrightarrow 0 $$
  is then a short exact sequence in the category of $\mathbf{k}$-vector spaces. 
  As $y \not\simeq x$, the fact that the above triangle is left almost split implies that any morphism from $x$ to $y$ factors through $\iota$, hence the surjectivity of the right most non trivial map in the above sequence. Let us now take $f$ a non trivial morphism in $\mathrm{Hom}(\tau^{-1}x,y)$ and assume $f \circ \pi=0$. By definition of Serre functors, we can consider a non trivial morphism $\overline{f} : S^{-1}y \rightarrow \tau^{-1}x$ such that $ f \circ \overline{f} \neq 0$.  Note that $\overline{f}$ is not an isomorphism, because in that case we would have $\tau^{-1}x \simeq S^{-1}y = \tau^{-1} \Sigma^{-1}y$ which contradicts our assumption that $y \not\simeq \Sigma x$. Therefore, as the triangle $x \rightarrow \bigoplus_{x \rightarrow z} z \rightarrow \tau^{-1}x \rightarrow \Sigma x$ is right almost split, $\overline{f}$ has to factor through $\pi$. But this implies $f \circ \overline{f}=0$ (as $f \circ \pi = 0$ by assumption) which is a contradiction. This shows the injectivity of the left most non trivial map in the desired short sequence. Finally the exactness of the middle part follows from the fact that $\mathrm{Hom}(-,y)$ is cohomological (equivalently, $\pi$ is a weak cokernel of $\iota$ in $\DbRepQ$). 
   As a consequence of the exactness of the above short exact sequence, we obtain 
   $$ \forall y \neq x, \Sigma x \enspace \dim \mathrm{Hom}_{\DbRepQ}(x,y) + \dim \mathrm{Hom}_{\DbRepQ}(\tau^{-1}x,y)  = \sum_{x \rightarrow z}  \dim \mathrm{Hom}_{\DbRepQ}(z,y)  . $$
   Therefore, in order to conclude that~\eqref{eq : identity of multisets} holds, it remains to observe that $x$ appears once in $H(x)$ (as $\dim \mathrm{Hom}(x,x)=1$) and does not appear in $H(\tau^{-1}x)$, while $\Sigma x$ appears once in $H(\tau^{-1}x)$ because
   $$  \dim \mathrm{Hom}_{\DbRepQ}(\tau^{-1}x,\Sigma x) =  \dim \mathrm{Hom}_{\DbRepQ}(x,Sx) = \dim \mathrm{Hom}_{\DbRepQ}(x,x) = 1 $$
   and does not appear in $H(x)$ as $ \dim \mathrm{Hom}_{\DbRepQ}(x,\Sigma x) = 0$. This finishes the proof of the proposition. 
  
  \end{proof}
  
 \begin{center}

\begin{figure}
  
 \begin{tikzpicture}


style des sommets
\tikzset{
vertex/.style={
  circle,
  draw=none,
  inner sep=1.5pt
}
}

\tikzset{
hammock1/.style={fill=blue!50, opacity=.4, rounded corners=6pt, inner sep = 5pt}
}

\coordinate (x) at (0,2);
\coordinate (y) at (1,0);
\coordinate (z) at (2,-2);
\coordinate (taux) at (-2,2);
\coordinate (tauy) at (-1,0);
\coordinate (tauz) at (0,-2);
\coordinate (tauinvx) at (2,2);
\coordinate (tauinvy) at (3,0);
\coordinate (tauinvz) at (4,-2);

\node (sommetX) at (x) {$x$};
\node (sommetY) at (y) {$y$};
\node (sommetZ) at (z) {$z$};
\node (sommettauX) at (taux) {$\tau x$};
\node (sommettauY) at (tauy) {$\tau y$};
\node (sommettauZ) at (tauz) {$\tau z$};
\node (sommettauinvX) at (tauinvx) {$\tau^{-1} x$};
\node[vertex] (sommettauinvY) at (tauinvy) {$\tau^{-1} y$};
\node[vertex] (sommettauinvZ) at (tauinvz) {$\tau^{-1} z$};

 \draw[->] (sommetX)--(sommetY);
 \draw[->] (sommettauX)--(sommettauY);
 \draw[->] (sommettauinvX)--(sommettauinvY);
 \draw[->] (sommettauY)--(sommettauZ);
 \draw[->] (sommetY)--(sommetZ);
 \draw[->] (sommettauinvY)--(sommettauinvZ);
 \draw[->] (sommetY)--(sommettauinvX);
 \draw[->] (sommettauY)--(sommetX);
 \draw[->] (sommetZ)--(sommettauinvY);
 \draw[->] (sommettauZ)--(sommetY);

 \draw[-,dashed] (sommettauX)--(sommetX);  
 \draw[-,dashed] (sommetX)--(sommettauinvX);  
 \draw[-,dashed] (sommettauY)--(sommetY);     
 \draw[-,dashed] (sommetY)--(sommettauinvY);  
 \draw[-,dashed] (sommettauZ)--(sommetZ);  
 \draw[-,dashed] (sommetZ)--(sommettauinvZ);

 \node[fit=(tauy)(x)(y)(tauz), name=hammock2, inner sep=15pt,  draw=none] {};

 \node[fit=(y)(tauinvx)(tauinvy)(z), name=hammock2bis, inner sep=15pt,  draw=none] {};

 \begin{scope}[on background layer]
 \fill[red, rounded corners=7 pt, opacity=.5]
    (hammock2.north)--(hammock2.east)--(hammock2.south)--(hammock2.west)--cycle; 

  \fill[red, rounded corners=7pt, opacity=.5]  
   (hammock2bis.north)--(hammock2bis.east)--(hammock2bis.south)--(hammock2bis.west)--cycle; 

 \draw[blue!70, opacity=.4, line cap=round, line width=16pt] (x)--(z);
 
 \draw[blue!70, opacity=.4, line cap=round, line width=16pt] (tauz)--(tauinvx);

  \node[draw,fill=green, opacity=0.8, circle, inner sep = 8pt] at (tauy) {};

   \node[draw, fill=green, opacity=0.8, circle, inner sep = 8pt] at (tauinvy) {};
   
 \end{scope}

 \end{tikzpicture}

  \caption{Illustration of~\eqref{eq : identity of multisets} for a quiver of type $A_3$: the union of the hammock multisets of $\tau y$ and $y$ (in red) can be decomposed as the union of those of $x$ and $\tau z$ (in blue), together with the set containing only $\tau y$ and its shift $\tau^{-1}y$ (in green).}
\label{fig : hammocks} 

\end{figure}
 
\end{center}

 \begin{rk} \label{rk : Serre tiltings vs Chari's action}
 It follows from Proposition~\ref{prop : mutation for objects in CQ} that  if $Q$ admits a height function $\xi : I \rightarrow \mathbb{Z}$ and  $x \in \ZQz$ is labeled by $(i,p) , i \in I , p \in \mathbb{Z}$, then  denoting 
 $$ Y_{i,p} := [Y(x)] \qquad F_{i,p} := [F(x)] $$
 in the split Grothendieck group of $\HZQ$, we have 
 $$[ \mu_x Y(x)] = F_{i,p} Y_{i,p} A_{i,p+1}^{-1} . $$
  For that reason, Serre tiltings should be viewed as some categorical analogue of Chari's braid group action on $\YZ$ \cite{Chari02}. This is one of the main motivations for the introduction of Serre tiltings. Note that our setup involves the classes $[F(x)] , x \in \ZQz$ that do not appear in the theory of $q$-characters, which is the reason why the statement of Theorem~\ref{thm : thm2 intro} involves a specialization on the left hand side. 
\end{rk}

  \subsection{The subcategory $\HQ$}

   We now define a full monoidal subcategory $\HQ$ of $\HZQ$ by allowing only Serre tiltings on a well-chosen section of $\ZQ$. More precisely, let us fix an embedding of $\mathrm{mod}\,(\mathbf{k}Q)$ into $\DbRepQ$ and fix isomorphism classes $\{x_i, i \in I \}$ of the indecomposable projective $\mathbf{k}Q$ modules. As the irreducible morphisms between the $x_i$ form a copy of $Q^{op}$, it yields a section in $\ZQ$ that we also denote $\{x_i , i \in I\}$. We then define $\HQ$ as the smallest $^\mathbf{k}$-linear monoidal full subcategory of $\HZQ$ whose class of objects contains $Y(x_i), Y(\tau^{-1}x_i)$ as well as $F(x_i)$ for every $i \in I$, and that is stable under Serre tiltings $\mu_{x_j} , j \in I$. We will often use the shorthand notation $\mu_j := \mu_{x_j}$.  We denote by $K_0(\HQ)$ the split Grothendieck group of $\HQ$, and set the following notations:
\begin{equation} \label{eq : classes of objects in HQ}
 \forall i \in I, \quad  Y_{x_i} := [Y(x_i)] \quad  Y_{\tau^{-1}x_i}:= [Y(\tau^{-1}x_i)] \quad F_i := [F(x_i)] . 
\end{equation}

 \section{A result on complete exceptional sequences in $\mathrm{mod}\,\mathbf{k}Q$}
  \label{sec : exceptional sequences}

  This section is devoted to the proof of the following result, which we believe may be of independent interest from the perspective of the representation theory of quivers. 

\begin{thm} \label{thm : exceptional sequences}
 Let $Q = (Q_0,Q_1)$ be a finite acyclic quiver, and denote $n := \sharp Q_0$ and $m := \sharp Q_1$. There exists a family $\{E^{(0)} , \ldots , E^{(m)}\}$  of complete exceptional sequences in $\mathrm{mod}\,\mathbf{k}Q$ satisfying the following properties:
     \begin{enumerate}
         \item $E^{(0)}$ (resp. $E^{(m)}$) consists only of the indecomposable projectives (resp. injectives).   
         \item If we denote $E^{(t)} := (x_1^{(t)} , \ldots , x_n^{(t)})$ for each $0 \leq t \leq m$, then 
         $$ \mathrm{Hom}_{\mathrm{mod}\,\mathbf{k}Q} \left( x_i^{(t)} , x_i^{(t')} \right) \neq 0 \quad \text{for every $0 \leq t \leq t' \leq m$ and $i \in \{1, \ldots , n \}$.} $$
         \item There is a bijection $\alpha \in Q_1 \longmapsto t_{\alpha} \in \{0, \ldots, m-1 \}$ such that 
         $$ \alpha := i \rightarrow j \enspace \Rightarrow \enspace   x_j^{(t_{\alpha}+1)} \simeq x_i^{(t_{\alpha})} \quad \text{and} \quad \mathrm{Hom}_{\mathrm{mod}\,\mathbf{k}Q} \left( x_j^{(t_{\alpha})} , x_i^{(t_{\alpha}+1)}  \right) = 0 . $$
         \item For each $i \in I$ and every arrow $\alpha$ whose target (resp. source) is $i$, we have 
         $$ \forall 0 \leq t,t' \leq m, \qquad  \text{$t \leq t_{\alpha} < t'$ (resp. $t<t_{\alpha} \leq t'$)}  \enspace \Rightarrow  \enspace  x_i^{(t)} \not\simeq  x_i^{(t')}.  $$
     \end{enumerate}
\end{thm}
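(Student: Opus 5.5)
I would build the sequences $E^{(t)}$ one arrow at a time. Let $A_t \subseteq Q_1$ be the set of arrows already \emph{processed} at stage $t$, with $A_0=\emptyset$ and $A_m=Q_1$. Each $E^{(t)}$ should interpolate between projectives and injectives: arrows outside $A_t$ act as in the projectives (forward paths), arrows in $A_t$ as in the injectives (backward paths).

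The candidate for $x_i^{(t)}$ is the indecomposable module with the following two properties. Its top is supported on the set $R_t(i)$ of vertices reached from $i$ by backward walks along arrows of $A_t$. It is generated there in the manner of $P_i^{\beta}$ along unprocessed arrows; equivalently, it is the image of the natural map from a projective of a full subquiver $\Qbeta$ to an injective of $\Qbeta$, in the spirit of the notation $P_i^{\beta}, I_i^{\beta}$ of Section~\ref{sec : background}. For $t=0$ this recovers $P_i$ and for $t=m$ it recovers $I_i$, which gives property (1) immediately.

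The first step is to fix the order $\alpha \mapsto t_\alpha$. I would take an admissible order on $Q_1$: process first the arrows $i \to j$ whose target $j$ is a sink of $Q$, remove that sink, and repeat (dually one could start from sources). With this order, when $\alpha : i\to j$ is processed, every arrow out of $j$ has already been processed. I would then check that passing from $E^{(t_\alpha)}$ to $E^{(t_\alpha+1)}$ is a single Crawley-Boevey braid mutation: the pair $(x_j^{(t_\alpha)}, x_i^{(t_\alpha)})$ is replaced by $(x_i^{(t_\alpha)}, L)$, where $L$ is the left (or right) mutation. This gives completeness and exceptionality of every $E^{(t)}$ for free, since the braid group acts on complete exceptional sequences. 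It also gives the equality $x_j^{(t_\alpha+1)}\simeq x_i^{(t_\alpha)}$ in property (3).

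The vanishing $\mathrm{Hom}(x_j^{(t_\alpha)}, x_i^{(t_\alpha+1)})=0$ should then follow from exceptionality of the mutated pair together with the ordering. For property (2), each $x_i^{(t+1)}$ is either equal to $x_i^{(t)}$ or obtained as the cokernel or universal extension in the mutation triangle. In the latter case there is a nonzero map $x_i^{(t)}\to x_i^{(t+1)}$, which is injective on tops by the explicit description. Composing these maps over $t \le s < t'$ should remain nonzero, since they are monomorphisms or are controlled by a common top vertex. I would verify this nonvanishing of the composite by computing on the vertex $i$ itself, where all the maps are identities on a one-dimensional space.

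Property (4) says that $x_i$ genuinely changes across each arrow incident to $i$. I would deduce it from dimension vectors: processing an arrow adjacent to $i$ strictly changes $\boldsymbol{\dim}\, x_i^{(t)}$, and a suitable partial order makes this change monotone, so the module cannot later return to an earlier one.

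The main obstacle is the second step in full generality: acyclic quivers without multiple edges whose underlying graph contains cycles. There the naive thin-module description may fail to be indecomposable, or fail to match the braid mutation. I would first prove everything for trees, where all modules involved are thin and every step is explicit. I would then treat the general case by working only with the mutation recursion and tracking dimension vectors via reflections, using that exceptional modules are determined by their dimension vectors.
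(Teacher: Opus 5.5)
Your plan does not work, and it already fails in type $A_3$, in two independent places. The first is the order on arrows: processing arrows into sinks first is incompatible with the theorem. For $Q=1\to 2\to 3$ your order is $t_{2\to 3}=0$, $t_{1\to 2}=1$. Property (3) gives $x_3^{(1)}\simeq x_2^{(0)}=P_2$ and $x_1^{(1)}\simeq x_2^{(2)}$. Properties (1)--(2) force $x_3^{(2)}=I_3$, since it is the only injective supported at $3$, and hence $x_2^{(2)}=I_2$. So $E^{(1)}$ would contain both $P_2$ and $I_2$. But $\mathrm{Hom}(P_2,I_2)\neq 0$ and $\mathrm{Ext}^1(I_2,P_2)\simeq D\mathrm{Hom}(P_2,\tau I_2)=D\,\mathrm{End}(P_2)\neq 0$, so no exceptional sequence contains both. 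The order must satisfy the opposite condition: every arrow into a vertex comes before every arrow out of it (Remark~\ref{rk : ineq on t}). The paper gets this by taking $i$ to be the source whose projective is last in $E^{(0)}$, processing all arrows out of $i$, and recursing on $Q'=Q\setminus\{i\}$. Your parenthetical ``dually one could start from sources'' is not a dual reformulation. The two orders are genuinely different, and only the source-first one can work.

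The second problem is that the passage from $E^{(t_\alpha)}$ to $E^{(t_\alpha+1)}$ is not a single mutation of the pair $(x_j,x_i)$ with all other terms fixed. So completeness and exceptionality do not come for free. Take $Q=1\leftarrow 2\to 3$ with $t_{2\to 3}=0$, $t_{2\to 1}=1$; the other order is symmetric. Property (3) forces $x_3^{(1)}\simeq P_2$, while (1)--(2) force $x_3^{(2)}\simeq I_3\not\simeq P_2$. So the term at vertex $3$ must change at the step processing $2\to 1$, an arrow not incident to $3$. Your own interpolating description of the modules already predicts this change, which contradicts your single-mutation claim.

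In the paper, each step is the conjugated braid $\sigma_{n-1}^{-1}\cdots\sigma_{k+1}^{-1}\sigma_k\sigma_{k+1}\cdots\sigma_{n-1}$. The last term, of dimension vector $\beta_t=\boldsymbol{\dim}P_i-{}$(sum of the projectives at the neighbours already processed), is moved next to the projective $P_j$ of the next neighbour. It is mutated across $P_j$, using $\langle\boldsymbol{\dim}P_j,\beta_t\rangle=1$: $j$'s slot receives the old term, as (3) requires, and the new term has dimension vector $\beta_{t+1}=\beta_t-\boldsymbol{\dim}P_j$. It is then moved back. This replaces every intermediate term by the $s_{\beta_{t+1}}$-reflection of the corresponding projective (Proposition~\ref{prop : sequences for arrows incident to i}). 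After all arrows at $i$ one obtains $(\Sigma_i x_1^{(0)},\dots,\Sigma_i x_{n-1}^{(0)},I_i)$, and the inductive family for $Q'$ is transported through the BGP reflection $\Sigma_i$.

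Your arguments for (2) and (4) are also insufficient. The composite argument presupposes that every $x_i^{(t)}$ is one-dimensional at $i$ and that every connecting map is nonzero there, and neither is established. Moreover, in the example above $\boldsymbol{\dim}\,x_3^{(t)}$ runs through $\alpha_3$, $\alpha_1+\alpha_2+\alpha_3$, $\alpha_2+\alpha_3$, so there is no monotonicity to exploit. The paper instead proves $\langle\boldsymbol{\dim}x_j^{(t)},\boldsymbol{\dim}x_j^{(t')}\rangle>0$ for $t\le t'$ and $\langle\beta_{t'},\beta_t\rangle\le 0$ for $t<t'$ by explicit Euler-form computations (Lemmas~\ref{lem : Euler forms} and~\ref{lem : positivity of Euler form}). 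None of these failures is specific to quivers with cycles, so the obstacle you single out is not the real one.
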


 Let us choose a complete exceptional sequence $E^{(0)}$ in $\mathrm{mod}\,\mathbf{k}Q$ consisting only of the indecomposable projectives, and let us consider $i \in Q_0$ such that $x_n^{(0)} \simeq P_i$. Note that in particular $i$ is a source in $Q$. Let $n > k_1 > \cdots >k_r \geq 1$ be such that $x_{k_1}^{(0)}, \ldots , x_{k_r}^{(0)}$ are respectively isomorphic to the indecomposable projectives at each of the neighbors of $i$ in $Q$. It will also be convenient to set $k_0 := n$.
 We introduce the following piece of notation:
 $$ \forall 0 \leq t \leq r, \enspace \beta_t := \boldsymbol{\dim} P_i - \sum_{1 \leq t' \leq t} \boldsymbol{\dim} x_{k_{t'}}^{(0)}  . $$
 Let us first establish a couple of straightforward technicalities. 
 \begin{lem} \label{lem : Euler forms}
    Let $ t \in \{1 , \ldots , r \}$ and $l \in \{1 , \ldots , n \}$. Then we have:
\begin{enumerate}
   \item If $l<k_t$, then  $\langle \beta_t ,  \boldsymbol{\dim} x_l^{(0)} \rangle = 0$ and $\langle \boldsymbol{\dim} x_l^{(0)}  , \beta_t   \rangle \geq  0$. Moreover, if $l=k_{t+1}$, then 
    $ \langle  \boldsymbol{\dim}x_{k_{t+1}}^{(0)} , \beta_t \rangle = 1 $.
    \item If  $l \geq k_t$, then $\langle\boldsymbol{\dim} x_l^{(0)}  , \beta_t   \rangle = 0 $ and  $\langle \beta_t ,  \boldsymbol{\dim} x_l^{(0)} \rangle  \leq 0$. Moreover, if $l=k_t$, then  $\langle \beta_t , \boldsymbol{\dim}x_{k_t}^{(0)} \rangle = -1$. 
\end{enumerate}
    
 \end{lem}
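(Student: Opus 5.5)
The plan is to reduce every pairing in the statement to dimensions of $\mathrm{Hom}$-spaces between indecomposable projectives. Since all the $x_l^{(0)}$ are projective, $\mathrm{Ext}^1(x_l^{(0)},-)=0$, so bilinearity of the Euler form gives
\[
\langle \boldsymbol{\dim} x_l^{(0)} , \beta_t \rangle = \dim \mathrm{Hom}(x_l^{(0)},P_i) - \sum_{1\le t'\le t} \dim \mathrm{Hom}(x_l^{(0)}, x_{k_{t'}}^{(0)}),
\]
\[
\langle \beta_t , \boldsymbol{\dim} x_l^{(0)} \rangle = \dim \mathrm{Hom}(P_i,x_l^{(0)}) - \sum_{1\le t'\le t} \dim \mathrm{Hom}(x_{k_{t'}}^{(0)}, x_l^{(0)}).
\]
Two inputs then do the work. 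The first is the exceptional ordering: $\mathrm{Hom}(x_a^{(0)},x_b^{(0)})=0$ whenever $a>b$, and $\dim\mathrm{End}(x_a^{(0)})=1$. The second is the path description $\dim\mathrm{Hom}(P_a,P_b)=\dim (P_b)_a$, which counts paths between $a$ and $b$. Denote by $j_s$ the neighbour of $i$ with $x_{k_s}^{(0)}\simeq P_{j_s}$.

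The key combinatorial identity comes from $i$ being a source and $Q$ having no multiple edges. Every nontrivial path out of $i$ starts with exactly one arrow $i\to j_s$, so for $a\neq i$ we have $p_Q(i,a)=\sum_{s=1}^r p_Q(j_s,a)$, while $p_Q(i,i)=1$. Writing $x_l^{(0)}\simeq P_{a_l}$ with $a_l\neq i$ (that is, $l<n$), this gives
\[
\langle \boldsymbol{\dim} x_l^{(0)},\beta_t\rangle=\sum_{s>t}\dim\mathrm{Hom}(x_l^{(0)},x_{k_s}^{(0)}).
\]

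Part (1), where $l<k_t$: the displayed sum is visibly $\geq 0$. For $l=k_{t+1}$ it equals $1$, because the $s=t+1$ term is $\dim\mathrm{End}=1$ and the terms with $s>t+1$ vanish since $k_{t+1}>k_s$. For the other pairing, $\mathrm{Hom}(P_i,x_l^{(0)})=0$ because $n>l$. Likewise $\mathrm{Hom}(x_{k_{t'}}^{(0)},x_l^{(0)})=0$ for $t'\le t$, because $k_{t'}\ge k_t>l$. Hence $\langle\beta_t,\boldsymbol{\dim}x_l^{(0)}\rangle=0$.

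Part (2), where $k_t\le l<n$: each term $\mathrm{Hom}(x_l^{(0)},x_{k_s}^{(0)})$ with $s>t$ vanishes, since $l\ge k_t>k_s$, so $\langle\boldsymbol{\dim}x_l^{(0)},\beta_t\rangle=0$. In the other pairing the $P_i$ term vanishes because $l<n$, leaving $-\sum_{t'\le t}\dim\mathrm{Hom}(x_{k_{t'}}^{(0)},x_l^{(0)})\le 0$. For $l=k_t$ this sum is exactly $1$: the $t'=t$ term is $\dim\mathrm{End}=1$, and the terms with $t'<t$ vanish because $k_{t'}>k_t$.

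The main point needing care is the extreme case $l=n$, i.e. $x_l^{(0)}=P_i$ itself. There the above computation gives $\langle\boldsymbol{\dim}P_i,\beta_t\rangle=1$, not $0$, under the convention $\langle P,M\rangle=\dim\mathrm{Hom}(P,M)$. So I would first pin down the paper's conventions (Euler form, and left versus right modules for $\dim P_i$). If, as I expect, the intended range in (2) is $k_t\le l<n$, or the vertex $i$ is treated separately, I would state that restriction explicitly. Everything else is bookkeeping with the two inputs above.
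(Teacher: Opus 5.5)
Your proof is correct and is essentially the paper's: both use projectivity to turn every pairing into $\mathrm{Hom}$-dimensions, kill terms with the exceptional ordering, and use that $i$ is a source with $\boldsymbol{\dim}P_i=\alpha_i+\sum_s\boldsymbol{\dim}P_{j_s}$ (the paper's identity $\beta_t=\alpha_i+\sum_{t'>t}\boldsymbol{\dim}x_{k_{t'}}^{(0)}$). The only cosmetic difference is that the paper obtains $\langle\beta_t,\boldsymbol{\dim}x_{k_t}^{(0)}\rangle=-1$ from part (1) via $\beta_t=\beta_{t-1}-\boldsymbol{\dim}x_{k_t}^{(0)}$. Your caveat is also right: $\langle\boldsymbol{\dim}P_i,\beta_t\rangle=(\beta_t)_i=1$, so the equality in (2) fails at $l=n$, although $\langle\beta_t,\boldsymbol{\dim}P_i\rangle\le 1-t\le 0$ still holds. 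The paper's proof tacitly assumes $l<n$ as well, since it uses $\langle x_l^{(0)},S_i\rangle=0$ and $\langle P_i,x_l^{(0)}\rangle=0$, so restricting (2) to $k_t\le l<n$ is exactly the right fix.
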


 \begin{proof}
      \textbf{Proof of (1).}
      If $l<k_t$ then $n > k_s \geq k_t >l$ for all $1 \leq s \leq t$ and hence $ \langle P_i,  x_l^{(0)}\rangle  = \langle x_{k_s}^{(0)}, x_l^{(0)} \rangle $ for all $1 \leq s \leq t$ as $E^{(0)}$ is an exceptional sequence.  Thus $\langle \beta_t , \boldsymbol{\dim} x_l^{(0)} \rangle = 0$. In addition to that, as $i$ is  a source in $Q$, we have 
      \begin{equation} \label{eq : identity of dim vectors}
      \beta_t = \alpha_i + \sum_{t'>t} \boldsymbol{\dim}x_{k_{t'}}^{(0)}    
      \end{equation}
      which implies $\langle \boldsymbol{\dim} x_l^{(0)} , \beta_t \rangle \geq 0$ because as $ x_l^{(0)}$ is projective, the Euler form $\langle  x_l^{(0)} , Y \rangle$ is non negative for any object $Y$ in $\mathrm{mod}\,\mathbf{k}Q$. Finally,   
      $$\langle \boldsymbol{\dim} x_{k_{t+1}}^{(0)} , \beta_t , \rangle =  \langle x_{k_{t+1}}^{(0)} , S_i \rangle + \sum_{t' \geq t+1} \langle x_{k_{t+1}}^{(0)} , x_{k_{t'}}^{(0)} \rangle =  1 $$
   because $\langle x_{k_{t+1}}^{(0)} , S_i \rangle=0$ while $\langle x_u^{(0)},x_v^{(0)} \rangle = 0$ for all $1 \leq v<u \leq r$ as $E^{(0)}$ is an exceptional sequence.  

\textbf{Proof of (2).}
If $l \geq k_t$ then $k_s < k_t \leq l$ for all $r \geq s>t$ and hence  $\langle \tau^{-1}\tau^{-1}x_l^{(0)} , x_{k_s}^{(0)} \rangle = 0 $ for all such $s$ as $E^{(0)}$ is an exceptional sequence. As $\langle \tau^{-1}\tau^{-1}x_l^{(0)} , S_i \rangle = 0$, we obtain $\langle \boldsymbol{\dim}\tau^{-1}\tau^{-1}x_l^{(0)} , \beta_t \rangle = 0$ using~\eqref{eq : identity of dim vectors}. On the other hand, we have $\langle P_i , \tau^{-1}\tau^{-1}x_l^{(0)} \rangle  = 0$ and hence from the definition of $\beta_t$ it immediately follows that $\langle \beta_t , \boldsymbol{\dim} \tau^{-1}\tau^{-1}x_l^{(0)} \rangle \leq 0$ as the Euler form $\langle \tau^{-1}\tau^{-1}x_l^{(0)} , Y \rangle$ is non negative for any object $Y$ in $\mathrm{mod}\,\mathbf{k}Q$. Finally, 
$$  \langle \beta_t , \boldsymbol{\dim}x_{k_t}^{(0)} \rangle = \langle \beta_{t-1} - \boldsymbol{\dim}x_{k_t}^{(0)} , \boldsymbol{\dim}x_{k_t}^{(0)} \rangle = \langle \beta_{t-1} , \boldsymbol{\dim}x_{k_t}^{(0)} \rangle -1 = -1 $$
applying the previous bullet point with $t-1$ instead of $t$, and $l=k_t$. 
 \end{proof}

 \begin{lem} \label{lem : positivity of Euler form}
  Let  $1 \leq t \leq t' \leq r$, let $l \in \{k_t , \ldots , n \}$ and let $Y$ be an indecomposable object in $\mathrm{mod}\,\mathbf{k}Q$ such that $\mathrm{Hom}(x_l^{(0)},Y) \neq 0$ and $\langle \boldsymbol{\dim}Y , \beta_{t'} \rangle = 0$.  Then  the Euler form $\langle \boldsymbol{\dim}x_l^{(0)}  + \lambda \beta_t , s_{\beta_{t'}} \boldsymbol{\dim} Y \rangle $ is positive for any $\lambda \geq 0$. 
 \end{lem}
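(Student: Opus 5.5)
Write $s_{\beta}(v) := v - (\beta,v)\beta$, with $(u,v) := \langle u,v\rangle + \langle v,u\rangle$ the symmetrized Euler form, so that $s_{\beta}$ is the reflection associated to the real root $\beta$. The plan is to expand the Euler form bilinearly and to control each piece with Lemma~\ref{lem : Euler forms}.

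Throughout I will use two facts. First, $\beta_{t'}$ is a real Schur root, namely the dimension vector of the exceptional module $\mathrm{coker}\big(\bigoplus_{s \leq t'} x_{k_s}^{(0)} \to P_i\big)$, so $(\beta_{t'},\beta_{t'})=2$. Second, the hypothesis $\langle \boldsymbol{\dim}Y,\beta_{t'}\rangle = 0$ simplifies the reflection: $(\beta_{t'},\boldsymbol{\dim}Y) = \langle \beta_{t'},\boldsymbol{\dim}Y\rangle =: c$, hence
$$ s_{\beta_{t'}}\boldsymbol{\dim}Y = \boldsymbol{\dim}Y - c\,\beta_{t'} . $$
The quantity to study is therefore
$$ \langle x_l^{(0)},Y\rangle + \lambda\langle \beta_t,Y\rangle - c\langle x_l^{(0)},\beta_{t'}\rangle - \lambda c\langle \beta_t,\beta_{t'}\rangle . $$

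The terms are handled as follows.

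1. Since $x_l^{(0)}$ is projective and $\mathrm{Hom}(x_l^{(0)},Y)\neq 0$, we have $\langle x_l^{(0)},Y\rangle = \dim\mathrm{Hom}(x_l^{(0)},Y) \geq 1$.

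2. Since $l \geq k_t \geq k_{t'}$, Lemma~\ref{lem : Euler forms}(2) applied with $t'$ gives $\langle x_l^{(0)},\beta_{t'}\rangle = 0$, so the third term vanishes.

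3. For $\langle \beta_t,\beta_{t'}\rangle$, write $\beta_t = \beta_{t'} + \sum_{t<s\leq t'} \boldsymbol{\dim}x_{k_s}^{(0)}$. Each $k_s \geq k_{t'}$, so Lemma~\ref{lem : Euler forms}(2) again kills the extra terms. Hence $\langle\beta_t,\beta_{t'}\rangle = \langle\beta_{t'},\beta_{t'}\rangle = 1$, because $\beta_{t'}$ is a real root.

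After these reductions the expression becomes
$$ \dim\mathrm{Hom}(x_l^{(0)},Y) + \lambda\big(\langle\beta_t,Y\rangle - \langle\beta_{t'},Y\rangle\big) . $$
Expanding $\beta_t - \beta_{t'}$ as above, the bracket equals $\sum_{t<s\leq t'}\langle x_{k_s}^{(0)},Y\rangle$. Each summand is $\dim\mathrm{Hom}(x_{k_s}^{(0)},Y)\geq 0$ by projectivity. The whole expression is therefore $\geq 1$ for every $\lambda \geq 0$, which proves the claim.

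The main obstacle is fixing the convention for $s_{\beta_{t'}}$ and the claim $\langle\beta_{t'},\beta_{t'}\rangle = 1$. If the paper uses the asymmetric variant $s_\beta(v) = v - \langle \beta, v\rangle\beta$, the computation is the same under the hypothesis on $Y$. I would justify $\langle\beta_{t'},\beta_{t'}\rangle=1$ by the direct computation from~\eqref{eq : identity of dim vectors}. That computation uses three inputs: $\langle \alpha_i,\alpha_i\rangle = 1$; the vanishing of $\langle x_u^{(0)},x_v^{(0)}\rangle$ for $u>v$ (the exceptional sequence property); and $\langle x_{k_s}^{(0)},x_{k_s}^{(0)}\rangle=1$. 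It also requires checking that the cross terms with $\alpha_i$ cancel. This is where the absence of multiple edges enters: $\langle \alpha_i , \boldsymbol{\dim}x_{k_s}^{(0)}\rangle = -1$ exactly, one for each neighbor, and these cancel against $\langle x_{k_s}^{(0)},x_{k_s}^{(0)}\rangle = 1$.
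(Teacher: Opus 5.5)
Your argument is the paper's proof essentially verbatim. It uses the same simplification $s_{\beta_{t'}}\boldsymbol{\dim}Y=\boldsymbol{\dim}Y-c\,\beta_{t'}$ (the paper's $\mu$ is your $-c$). It uses Lemma~\ref{lem : Euler forms}(2) in the same way, both to kill $\langle\boldsymbol{\dim}x_l^{(0)},\beta_{t'}\rangle$ and to reduce $\langle\beta_t,\beta_{t'}\rangle$ to $\langle\beta_{t'},\beta_{t'}\rangle$. It ends with the same rewriting of the $\lambda$-coefficient as $\sum_{t<s\leq t'}\langle x_{k_s}^{(0)},Y\rangle\geq 0$.

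The one point to repair is your justification of $\langle\beta_{t'},\beta_{t'}\rangle=1$.
\begin{itemize}
\item The real-Schur-root claim is true, but you only assert it.
\item The direct computation you sketch rests on incorrect ingredients. In general $\langle\alpha_i,\boldsymbol{\dim}x_{k_s}^{(0)}\rangle=-\sum_{s'}\dim\mathrm{Hom}(x_{k_{s'}}^{(0)},x_{k_s}^{(0)})$, which need not be $-1$. Also, the cross terms $\langle x_{k_{s'}}^{(0)},x_{k_s}^{(0)}\rangle$ with $s'>s$ (an earlier position in $E^{(0)}$) are not killed by exceptionality.
\end{itemize}
For instance, take arrows $i\to a$, $i\to b$, $i\to c$, $b\to c$ and $E^{(0)}=(P_c,P_b,P_a,P_i)$. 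Then $\beta_1=\alpha_i+\boldsymbol{\dim}P_b+\boldsymbol{\dim}P_c$, while $\langle\alpha_i,\boldsymbol{\dim}P_b\rangle=-2$ and $\langle\boldsymbol{\dim}P_c,\boldsymbol{\dim}P_b\rangle=1$. The two discrepancies cancel, so the value is still $1$, but not for the reason you give.

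A clean argument is to expand the first slot by the definition of $\beta_{t'}$ and the second slot by \eqref{eq : identity of dim vectors}. Lemma~\ref{lem : Euler forms}(1) gives $\langle\beta_{t'},\boldsymbol{\dim}x_{k_s}^{(0)}\rangle=0$ for $s>t'$. Hence $\langle\beta_{t'},\beta_{t'}\rangle=\langle\beta_{t'},\alpha_i\rangle=\langle P_i,S_i\rangle-\sum_{s\leq t'}\langle x_{k_s}^{(0)},S_i\rangle=1$. The paper instead inducts on $t$, again using Lemma~\ref{lem : Euler forms}(1).
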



  \begin{proof}
   Denote $\mu := - \langle \beta_{t'} , \boldsymbol{\dim}Y \rangle$. Then
   $$\langle \boldsymbol{\dim}x_l^{(0)} , s_{\beta_{t'}} \boldsymbol{\dim} Y \rangle = \langle x_l^{(0)} , Y \rangle + \mu  \langle \boldsymbol{\dim} x_l^{(0)} , \beta_{t'} \rangle  > 0 $$
   as $\mathrm{Hom} \left( x_l^{(0)} , Y  \right)  \neq 0$ by assumption and $\langle \boldsymbol{\dim} x_l^{(0)} , \beta_{t'} \rangle = 0 $ by Lemma~\ref{lem : Euler forms} (2) given that $l \geq k_t \geq k_{t'}$. On the other hand, using Lemma~\ref{lem : Euler forms}, a straightforward induction shows that $\langle \beta_t , \beta_t \rangle = 1$ and hence (again using Lemma~\ref{lem : Euler forms}) $\langle \beta_t , \beta_s \rangle = 1$  for all $1 \leq t \leq s \leq r$.  Thus we get
    $$ \langle \beta_t , \boldsymbol{\dim} Y + \mu \beta_{t'} \rangle = \langle \beta_t , \boldsymbol{\dim} Y \rangle + \mu =  \langle \beta_t - \beta_{t'} , \boldsymbol{\dim} Y \rangle = \sum_{t<s \leq t'} \langle x_{k_s}^{(0)} , Y \rangle \geq 0 . $$
    All together, we conclude that  the desired Euler form is positive. 
    
   \end{proof}

  \begin{prop} \label{prop : sequences for arrows incident to i}
  For every $0 \leq t \leq r$, there is a complete exceptional sequence $E^{(t)} = (x_1^{(t)} , \ldots , x_n^{(t)})$ such that:
  $$ \forall 1 \leq l \leq n, \enspace  \boldsymbol{\dim} x_l^{(t)} = 
  \begin{cases}
     \boldsymbol{\dim}x_l^{(0)} & \text{if $1 \leq l < k_t$,} \\
    s_{\beta_t} \boldsymbol{\dim} x_l^{(0)} & \text{if $k_t  \leq l < n$,} \\
     \beta_t & \text{if $l=n$.}
  \end{cases}$$
  \end{prop}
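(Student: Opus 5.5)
Induction on $t$, building $E^{(t)}$ from $E^{(t-1)}$ by Crawley-Boevey's braid group action on complete exceptional sequences. For $t=0$ there is nothing to prove, since $\beta_0=\boldsymbol{\dim}P_i$ and $x_n^{(0)}\simeq P_i$. Assume $E^{(t-1)}$ is constructed with the stated dimension vectors. The natural move is to transport the last term (dimension vector $\beta_{t-1}$) leftwards past positions $n-1,\ldots,k_t$ via successive braid moves (mutations). Each move replaces a consecutive pair $(A,B)$ by $(B', A)$ or $(B,A')$, where $A'$ or $B'$ is the universal extension/cokernel object, and its dimension vector is a reflection $s_{\boldsymbol{\dim}A}\boldsymbol{\dim}B$ up to sign. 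We know the dimension vectors of the objects produced by braid moves are exactly given by such reflections. This follows from the fact that exceptional modules over a hereditary algebra are determined by their (real Schur root) dimension vectors, together with the standard formula for mutation dimension vectors in terms of the Euler form.

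Concretely, I would compare with the target. Positions $l<k_t$ must stay unchanged. For $l$ with $k_t\le l<k_{t-1}$ we need $s_{\beta_t}\boldsymbol{\dim}x_l^{(0)}$. For $k_{t-1}\le l<n$ we need $s_{\beta_t}\boldsymbol{\dim}x_l^{(0)}$ where previously we had $s_{\beta_{t-1}}\boldsymbol{\dim}x_l^{(0)}$. The last term must become $\beta_t=\beta_{t-1}-\boldsymbol{\dim}x_{k_t}^{(0)}$.

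This suggests a cleaner route: apply the braid moves to the exceptional pair formed by $x_{k_t}^{(0)}$ and the object of dimension vector $\beta_{t-1}$. By Lemma~\ref{lem : Euler forms}(1), $\langle \boldsymbol{\dim}x_{k_t}^{(0)},\beta_{t-1}\rangle=1$. So moving $x_{k_t}$ to the right past the $\beta_{t-1}$-object produces a term of dimension vector $\pm(\beta_{t-1}-\boldsymbol{\dim}x_{k_t}^{(0)})=\beta_t$. The remaining objects are then reflected by $s_{\beta_t}$ when $\beta_t$ is moved back to the end. Lemma~\ref{lem : Euler forms}(2) guarantees that $\langle\boldsymbol{\dim}x_l^{(0)},\beta_t\rangle=0$ for $l\ge k_t$. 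It also gives $\langle\beta_t,\boldsymbol{\dim}x_l^{(0)}\rangle\le0$, so the moves act by genuine reflections of the correct sign. Moving $\beta_t$ past positions $l<k_t$ is not required, consistent with those entries being unchanged.

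Which objects arise from which moves would be organized as follows. First use the left-mutation to move the $\beta_{t-1}$ object from position $n$ down to just right of position $k_t$; with the Hom-orthogonality it passes the objects $s_{\beta_{t-1}}x_l$ and undoes the reflection. Then perform the single nontrivial move against $x_{k_t}$. Then move the resulting $\beta_t$ object back to position $n$, reflecting everything by $s_{\beta_t}$. A positivity check is needed so that each produced dimension vector is positive rather than its negative, since a mutated object's dimension vector is $\pm$ the reflection. Lemma~\ref{lem : positivity of Euler form} is exactly the tool for this, applied with $\lambda\ge0$ to show the relevant Euler pairings are positive and hence that $s_{\beta_{t'}}\boldsymbol{\dim}Y$ is a genuine module dimension vector.

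The main obstacle is this sign and positivity bookkeeping. For each braid move one must decide whether $\mathrm{Hom}$ or $\mathrm{Ext}^1$ is the nonzero space, which dictates whether the new vector is $s(\cdot)$ or $-s(\cdot)$. I would handle this by computing each relevant Euler form via Lemma~\ref{lem : Euler forms}, and using that for an exceptional pair at most one of $\mathrm{Hom},\mathrm{Ext}^1$ is nonzero (Happel–Ringel). Where the Euler form alone is inconclusive, I would invoke Lemma~\ref{lem : positivity of Euler form}.
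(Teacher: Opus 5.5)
Your proposal is correct and follows the paper's proof essentially step for step. It uses the same three-stage braid move: slide the $\beta_{t-1}$-object left to position $k_t+1$, make the single nontrivial move against $x_{k_t}^{(0)}$ using $\langle\boldsymbol{\dim}x_{k_t}^{(0)},\beta_{t-1}\rangle=1$, then slide the new $\beta_t$-object back to position $n$ via Lemma~\ref{lem : Euler forms}(2). Two small points: make explicit that position $k_t$ ends up holding the $\beta_{t-1}$-object, which matches the target because $\beta_{t-1}=\boldsymbol{\dim}x_{k_t}^{(0)}+\beta_t=s_{\beta_t}\boldsymbol{\dim}x_{k_t}^{(0)}$ by Lemma~\ref{lem : Euler forms}(2); and Lemma~\ref{lem : positivity of Euler form} is not needed for the signs, since every vector produced is either $\beta_t$ or of the form $\boldsymbol{\dim}x_l^{(0)}+c\,\beta$ with $c\geq 0$, hence manifestly positive.
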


   \begin{proof}
   The proof goes by induction on $t$. For $t=0$ there us nothing to prove (recall that $k_0 := n$).  Let us now assume it holds at rank $t$. We shall perform $ \sigma_{n-1}^{-1} \cdots \sigma_{k_{t+1}+1}^{-1} \sigma_{k_{t+1}} \cdots \sigma_{n-1}$ on $E^{(t)}$. First, we perform $\sigma_{k_{t+1}+1} \cdots \sigma_{n-1}$ on $E^{(t)}$: we obtain a complete exceptional sequence $E' = (X'_1 , \ldots , X'_n)$ whose objects are such that 
   $$ \boldsymbol{\dim} X'_j  \simeq \begin{cases}
       \boldsymbol{\dim} x_j^{(0)} & \text{if $j \leq k_{t+1}$,}\\
       \beta_t & \text{if $j = k_{t+1}+1$,} \\
       X_{j-1}^{(0)} & \text{if $j>k_{t+1}+1$. }
   \end{cases} 
   $$
   Then, performing the action of the elementary generator $\sigma_{k_{t+1}}$ on $E'$ yields a complete exceptional sequence $E''$ which is the same as $E'$ except that $ \boldsymbol{\dim}X_{k_{t+1}}'' = \beta_t$ while 
   $$\boldsymbol{\dim} X_{k_{t+1}+1}''  = \beta_t - \langle \boldsymbol{\dim} x_{k_{t+1}}^{(0)} , \beta_t , \rangle \boldsymbol{\dim}x_{k_{t+1}}^{(0)} =  \beta_t - \boldsymbol{\dim}x_{k_{t+1}}^{(0)} =   \beta_{t+1}  $$
   using Lemma~\ref{lem : Euler forms} (1). 
    Finally, we perform the action of $\sigma_{n-1}^{-1} \cdots \sigma_{k_{t+1}+1}^{-1}$ on $E''$ and we define $E^{(t+1)}$ as the resulting (complete) exceptional sequence. We obtain   $$  \boldsymbol{\dim} x_j^{(t+1)}  = \begin{cases}
       x_j^{(0)} & \text{it $j< k_{t+1}$,} \\
       \beta_t & \text{if $j = k_{t+1}$}, \\
       s_{\beta_{t+1}} \boldsymbol{\dim} x_j^{(0)} & \text{if $k_{t+1} < j < n$,} \\
       \beta_{t+1} & \text{if $j=n$.}
   \end{cases} $$
  Thus to conclude, it suffices to note that by definition we have   $\beta_t = \beta_{t+1} + \boldsymbol{\dim}x_{k_{t+1}}^{(0)}$, which  coincides with $s_{\beta_{t+1}} \boldsymbol{\dim}x_{k_{t+1}}^{(0)}$ by Lemma~\ref{lem : Euler forms} (1). 
   \end{proof}

 \begin{cor} \label{cor : properties satisfied for arrows incident to i}
  For every $1 \leq j \leq n$, we have $\mathrm{Hom} \left( x_j^{(t)} , x_j^{(t')} \right) \neq 0$ for all  $1 \leq t \leq t' \leq r$. Moreover, for every $1 \leq t \leq r$, we have $x_{k_t}^{(t)} \simeq x_n^{(t-1)}$   and $\mathrm{Hom} \left( x_{k_t}^{(t-1)} , x_n^{(t)} \right) = 0 $. 
 \end{cor}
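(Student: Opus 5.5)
We derive everything from Proposition~\ref{prop : sequences for arrows incident to i}, which records the dimension vectors of the $E^{(t)}$. We then use the standard fact that an indecomposable exceptional $\mathbf{k}Q$-module is determined up to isomorphism by its dimension vector (a real Schur root). Hom-spaces are controlled through the Euler form $\langle -,-\rangle$. For exceptional modules $X,Y$ with $\mathrm{Ext}^1(X,Y)=0$ we have $\dim\mathrm{Hom}(X,Y)=\langle \boldsymbol{\dim}X,\boldsymbol{\dim}Y\rangle$, and by Happel–Ringel at least one of $\mathrm{Hom}(X,Y)$, $\mathrm{Ext}^1(X,Y)$ vanishes when they lie in a common exceptional sequence, or when one is a reflection-type image of the other.

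\textbf{The isomorphism $x_{k_t}^{(t)}\simeq x_n^{(t-1)}$.} The proof of Proposition~\ref{prop : sequences for arrows incident to i} shows $\boldsymbol{\dim}x_{k_t}^{(t)}=\beta_{t-1}$ and $\boldsymbol{\dim}x_n^{(t-1)}=\beta_{t-1}$. Both modules are exceptional, hence isomorphic.

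\textbf{The vanishing $\mathrm{Hom}(x_{k_t}^{(t-1)},x_n^{(t)})=0$.} Here $x_{k_t}^{(t-1)}=x_{k_t}^{(0)}=P_{j}$ for a neighbour $j$ of $i$, since $k_t<k_{t-1}$, and $\boldsymbol{\dim}x_n^{(t)}=\beta_t$. So the dimension equals the $j$-th coordinate of $\beta_t$. By \eqref{eq : identity of dim vectors}, $\beta_t=\alpha_i+\sum_{t'>t}\boldsymbol{\dim}P_{j_{t'}}$. Since $Q$ has no multiple edges and is acyclic, no path from $j_{t'}$ reaches $j_t$ for $t'\neq t$: such a path would combine with the arrows from $i$ into a second path from $i$ to $j_t$. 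This contradicts the fact, which I expect the authors to use implicitly via the ordering of the exceptional sequence, that $\boldsymbol{\dim}P_i$ has coordinate $1$ at $j_t$. More cleanly, $\langle P_{j_t},\beta_t\rangle=\langle\beta_{t-1},\beta_t\rangle$-type computations from Lemma~\ref{lem : Euler forms} give $0$, together with $\mathrm{Ext}^1(P_{j_t},-)=0$. I would use this Euler-form route: $\langle \boldsymbol{\dim}x_{k_t}^{(0)},\beta_t\rangle=0$ by Lemma~\ref{lem : Euler forms}(2) with $l=k_t$, and projectivity kills $\mathrm{Ext}^1$, so $\mathrm{Hom}=0$.

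\textbf{Nonvanishing of $\mathrm{Hom}(x_j^{(t)},x_j^{(t')})$ for $t\le t'$.} We split by position.

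If $j<k_{t'}$, both modules equal $x_j^{(0)}$, and $\mathrm{End}\neq0$.

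If $j=n$, the dimension vectors are $\beta_t,\beta_{t'}$. In the proof of Lemma~\ref{lem : positivity of Euler form} it was shown that $\langle\beta_t,\beta_{t'}\rangle=1$, so it suffices that $\mathrm{Ext}^1(x_n^{(t)},x_n^{(t')})=0$. This would follow by realising $x_n^{(t')}$ as a quotient of $x_n^{(t)}$: the kernel has dimension $\sum_{t<s\le t'}\boldsymbol{\dim}P_{j_s}$, sitting inside $P_i$.

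If $k_{t'}\le j<n$, the dimension vectors are either $\boldsymbol{\dim}x_j^{(0)}$ or $s_{\beta_t}\boldsymbol{\dim}x_j^{(0)}$ on the left, and $s_{\beta_{t'}}\boldsymbol{\dim}x_j^{(0)}$ on the right, with the exceptional case $j=k_s$, where the vector is $\beta_{s-1}$. Lemma~\ref{lem : positivity of Euler form} is tailored exactly to this. Take $l=j$ and $Y=x_j^{(0)}$. Then $\langle\boldsymbol{\dim}Y,\beta_{t'}\rangle=0$ by Lemma~\ref{lem : Euler forms}(2), and $s_{\beta_t}\boldsymbol{\dim}x_j^{(0)}=\boldsymbol{\dim}x_j^{(0)}+\lambda\beta_t$ with $\lambda=-\langle\beta_t,\boldsymbol{\dim}x_j^{(0)}\rangle\ge0$. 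The lemma then gives $\langle \boldsymbol{\dim}x_j^{(t)},\boldsymbol{\dim}x_j^{(t')}\rangle>0$, which forces $\mathrm{Hom}\neq0$, because $\dim\mathrm{Hom}-\dim\mathrm{Ext}^1$ is positive.

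The cases $j=k_s$ with $t<s\le t'$ reduce to the $j=n$ case via $x_{k_s}^{(s)}\simeq x_n^{(s-1)}$.

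The main obstacle is the $j=n$ case, and more generally justifying that the nonzero Euler form really comes from Hom rather than being cancelled by Ext. I would handle it by an explicit quotient map $x_n^{(t)}\twoheadrightarrow x_n^{(t')}$, constructed inductively from the mutations in the proof of Proposition~\ref{prop : sequences for arrows incident to i}, whose exchange triangles produce epimorphisms.
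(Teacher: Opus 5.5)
Your proof follows the paper's in all three parts. Equal dimension vectors $\beta_{t-1}$, plus the fact that exceptional modules are determined by their dimension vectors, give $x_{k_t}^{(t)}\simeq x_n^{(t-1)}$. The identity $\langle \boldsymbol{\dim}x_{k_t}^{(0)},\beta_t\rangle=0$, together with projectivity of $x_{k_t}^{(t-1)}=x_{k_t}^{(0)}$, gives the Hom-vanishing. Lemma~\ref{lem : positivity of Euler form} with $l=j$, $Y=x_j^{(0)}$ handles $k_{t'}\le j<n$.

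\textbf{The case $j=n$.} Your one real weakness here is self-inflicted: this case is not an obstacle. Since $\langle X,Y\rangle=\dim\mathrm{Hom}(X,Y)-\dim\mathrm{Ext}^1(X,Y)$, extensions can only lower the Euler form. So $\langle\beta_t,\beta_{t'}\rangle=1$ already forces $\mathrm{Hom}(x_n^{(t)},x_n^{(t')})\neq 0$. This is the same principle you use in the third case, and it is exactly how the paper concludes. You do not need $\mathrm{Ext}^1=0$. The quotient map you plan to extract from exchange triangles is never constructed, so as written this case rests on an unproved claim; replace it by the one-line argument.

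\textbf{Minor corrections.}

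First, $j=k_s$ needs no separate treatment. The Proposition gives $\boldsymbol{\dim}x_{k_s}^{(t')}=s_{\beta_{t'}}\boldsymbol{\dim}x_{k_s}^{(0)}$ uniformly for $t'\ge s$, and this equals $\beta_{s-1}$ only at $t'=s$. For instance, in Example~\ref{ex : exceptional sequences} the row of vertex $2$ reads $P_2,P_1,x$, and $\boldsymbol{\dim}x\neq\boldsymbol{\dim}P_1=\beta_0$. So the Lemma~\ref{lem : positivity of Euler form} argument already covers these $j$. The proposed ``reduction to $j=n$'' via $x_{k_s}^{(s)}\simeq x_n^{(s-1)}$ does not actually produce a pair of the form $(x_n^{(t)},x_n^{(t')})$.

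Second, the path-counting digression in the vanishing part is false in general. Take arrows $i\to a$, $i\to b$, $a\to b$: there is a path between two neighbours of $i$, and $\boldsymbol{\dim}P_i$ has coordinate $2$ at $b$. What is true is only that no path goes from $j_{t'}$ to $j_t$ for $t'>t$, by the ordering of $E^{(0)}$. Since you fall back on the Euler-form route, this does not affect the proof.
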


  \begin{proof}
   We have already noted (cf. the proof of Lemma~\ref{lem : positivity of Euler form}) that $ \langle \beta_t , \beta_{t'} \rangle = 1 $  if $t \leq t'$, so the first statement holds for $j=n$. Let us now assume $j<n$. Combining Proposition~\ref{prop : sequences for arrows incident to i} together with Lemma~\ref{lem : Euler forms} (2), we see that $\boldsymbol{\dim}x_j^{(t)}$ can be written as $\boldsymbol{\dim} x_j^{(0)} + \lambda_{j,t} \beta_t$ with $\lambda_{j,t} \geq 0$. If $j < k_{t'}$ then $j< k_t$ as well and hence $\mathrm{Hom} \left( x_j^{(t)} , x_j^{(t')} \right) = \mathrm{Hom} \left(  x_j^{(0)} , x_j^{(0)} \right) \neq 0 $. If $l \geq k_{t'}$, then by Lemma~\ref{lem : Euler forms}(2) the equality $\langle \boldsymbol{\dim} x_j^{(0)} , \beta_{t'} \rangle = 0$ holds and hence applying Lemma~\ref{lem : positivity of Euler form}  with $l=j,  Y = x_j^{(0)}$ and $\lambda = \lambda_{j,t}$ we obtain
   $$ \langle x_j^{(t)} , x_j^{(t')} \rangle = \langle  \boldsymbol{\dim} x_j^{(0)} + \lambda_{j,t} \beta_t , s_{\beta_{t'}} x_j^{(0)} \rangle > 0 $$
   which establishes the first desired statement. 
For the second statement, by Proposition~\ref{prop : sequences for arrows incident to i} together with Lemma~\ref{lem : Euler forms} (1), we have 
$$ \boldsymbol{\dim}x_{k_t}^{(t)} = s_{\beta_t} \boldsymbol{\dim}x_{k_t}^{(0)} = \boldsymbol{\dim}x_{k_t}^{(0)} + \beta_t = 
\beta_{t-1} =  \boldsymbol{\dim} X_n^{(t-1)}  . $$
We also have  $ \langle \boldsymbol{\dim} x_{k_t}^{(0)} , \beta_t \rangle = 0$ by Lemma~\ref{lem : Euler forms} (1),  which implies $ \mathrm{Hom} \left(  x_{k_t}^{(t-1)} , x_n^{(t)} \right)= 0 $.

  \end{proof}

 \begin{proof}[\textbf{Proof of Theorem~\ref{thm : exceptional sequences}.}]
  We obviously have $\beta_r = \alpha_i  = \boldsymbol{\dim}S_i = \boldsymbol{\dim}I_i$ as $i$ is a source in $Q$. Therefore Proposition~\ref{prop : sequences for arrows incident to i} implies that the exceptional sequence $E^{(r)}$ takes the form $(\Sigma_i x_1^{(0)} , \ldots , \Sigma_i x_{n-1}^{(0)} , I_i)$ where $\Sigma_i$ denotes the BGP reflection functor at $i$. Note also that $x_1^{(0)} , \ldots , x_{n-1}^{(0)}$ are the indecomposable projectives other than $P_i$, so denoting by $Q'$ the full subquiver of $Q$ obtained by deleting the node $i$ as well as all the arrows incident to $i$, one can view $(x_1^{(0)} , \ldots , x_{n-1}^{(0)})$ as a complete exceptional sequence in $\mathrm{mod}\,\mathbf{k}Q'$ consisting of the indecomposable projectives. By induction on the number of vertices, we have a family $E'^{(r)} , \ldots , E'^{(m)}$ of complete exceptional sequences in $\mathrm{mod}\,\mathbf{k}Q'$ satisfying all the desired properties.  Then for each $r<t \leq m$, we define $E^{(t)}$ as the sequence whose $n-1$ first objects are the images of those of $E'^{(t)}$ under the reflection functor $\Sigma_i$, and whose $n$th object is the injective $I_i$. In the case $t=r$ this procedure gives $E^{(r)}$ by what has just been said before. We can already note that $E^{(r+1)} , \ldots , E^{(m)}$ are (complete) exceptional sequences, because the proof of Proposition~\ref{prop : sequences for arrows incident to i} shows that each of them can be obtained from $E^{(r)}$ by performing a sequences of braids involving only $\sigma_1^{ \pm} , \ldots , \sigma_{n-2}^{ \pm}$ (so $I_i$ will always remain the last object of all of these sequences). 
  
  Let us now verify that putting together the sequences $E^{(0)} , \ldots , E^{(m)} $, Properties (1)-(5) hold. Property (1) holds because by the induction assumption, $E'^{(r)}$ consists of all the indecomposable injectives $I_j , j \neq i$; as  $\Sigma_i I_j \simeq I_j$ in that case, it follows that $E^{(m)}$ consists of all the indecomposable injectives of $\mathrm{mod}\,\mathbf{k}Q$. Property (3) immediately follows from the induction assumption together with Corollary~\ref{cor : properties satisfied for arrows incident to i}. Property (2) follows from the induction assumption if $r<t \leq t' \leq m$ and from Corollary~\ref{cor : properties satisfied for arrows incident to i} if $0 \leq t \leq t' \leq r$ so that it remains only to check it if $t \leq r < t'$. Keeping the notations of the previous Corollary, we need to show that $\langle  \boldsymbol{\dim }\tau^{-1}x_j^{(0)} + \lambda_{j,t} \beta_t , s_i  \boldsymbol{ \dim } Y \rangle >0$ where $Y$ is an indecomposable object in $\mathrm{mod}\,\mathbf{k}Q$ supported on $Q'$ and such that $\boldsymbol{\dim} Y \geq \boldsymbol{\dim } \tau^{-1}x_j^{(0)}$. In particular $\langle Y,S_i \rangle = 0$ and $\langle \tau^{-1}x_j^{(0)}  , Y \rangle >0 $. Using~\eqref{eq : identity of dim vectors} and the fact that $\langle P,X \rangle$ is non negative for any object $X$ if $P$ is projective, we get 
  \begin{align*}
      \langle x_j^{(0)} + \lambda_{j,t} \beta_t , s_i \boldsymbol{\dim} Y \rangle & > \lambda_{j,t}  \langle \beta_t , s_i \boldsymbol{\dim} Y \rangle  \geq \lambda_{j,t} \langle \alpha_i , s_i \boldsymbol{\dim} Y \rangle \\
      &= \lambda_{j,t} \langle \alpha_i, \boldsymbol{\dim} Y - \langle S_i, Y \rangle \alpha_i \rangle  = 0 . 
  \end{align*}
Finally Let us check Property (4). Keeping the same notations as in Proposition~\ref{prop : sequences for arrows incident to i}, we first need to check that $x_i^{(t)} \not\simeq x_i^{(t')} $ for all $0 \leq t < t' \leq r$. This is equivalent to showing that $\beta_t \neq \beta_{t'}$ if $0 \leq t < t' \leq r$. We  shall show that $\langle \beta_{t'} , \beta_t \rangle  \leq 0$, which will in particular imply that $\beta_t \neq \beta_{t'}$. Recall that as $t<t'$ we have $k_t > k_{t'}$. Thus,
\begin{align*}
    \langle \beta_{t'} , \beta_t \rangle   &= \langle \beta_{t'} , \alpha_i + \sum_{s>t} \boldsymbol{\dim}x_{k_s}^{(0)} \rangle = 1 + \sum_{s>t} \langle \beta_{t'} , \boldsymbol{\dim}x_{k_s}^{(0)} \rangle \\
    &= 1 + \langle \beta_{t'} , \boldsymbol{\dim} x_{k_{t'}}^{(0)} \rangle + \sum_{t<s<t'} \langle \beta_{t'} , \boldsymbol{\dim}x_{k_s}^{(0)} \rangle \leq 0
\end{align*}
because $ \langle \beta_{t'} , \boldsymbol{\dim}x_{k_s}^{(0)} \rangle = 0 $ if $s \geq t'$ by Lemma~\ref{lem : Euler forms}  (1), and $ \langle \beta_{t'} , \boldsymbol{\dim} x_{k_{t'}}^{(0)} \rangle = -1 $ and $ \langle \beta_{t'} , \boldsymbol{\dim}x_{k_s}^{(0)} \rangle \leq 0 $ for each $t<s<t'$, by Lemma~\ref{lem : Euler forms} (2). Thus Property (4) holds at the vertex $i$. Let us now fix $1 \leq s \leq r$ and  show that $x_{k_s}^{(t)} \not \simeq x_{k_s}^{(t')}$ is $t \leq s-1 < t'$ (recall that by construction, $t_{i \rightarrow k_s} = s-1$ for every $1 \leq s  \leq r$). First it is worth noting that in view of the property (1), it is enough to assume $r' \leq r$.  We then have $k_t < k_s \geq k_{t'}$ and thus by Proposition~\ref{prop : sequences for arrows incident to i} :
$$ \boldsymbol{\dim} x_{k_s}^{(t)}  = \boldsymbol{\dim} x_{k_s}^{(0)} \qquad \text{and} \qquad  \boldsymbol{\dim} x_{k_s}^{(t')}  = s_{\beta_{t'}} \boldsymbol{\dim} x_{k_s}^{(0)}  . $$
 So we simply need to check that the Cartan pairing of $\beta'$ with $\boldsymbol{\dim}x_{k_s}^{(0)}$ is non zero. As $k_s \geq k_{t'}$,  Lemma~\ref{lem : Euler forms} (2) implies that $\langle x_{k_s}^{(0)} , \beta_{t'} \rangle =0$, while the other Euler form is easily seen to be negative:
 \begin{align*}
     \langle \beta_{t'} , \boldsymbol{\dim}x_{k_s}^{(0)} \rangle &=  \langle P_i , x_{k_s}^{(0)} \rangle - \sum_{1 \leq u \leq t'} \langle x_{k_u}^{(0)} , 
  x_{k_s}^{(0)} \rangle  \leq \langle x_{k_s}^{(0)} , x_{k_s}^{(0)} \rangle = -1 \end{align*}
  because $s \leq t'$. So we have shown that Property (4) holds for all arrows incident to $i$. The remaining cases depend only on the subquiver $Q'$ and hence follow by induction. 
  This concludes the proof of Theorem~\ref{thm : exceptional sequences}. 
\end{proof}

 \begin{rk} \label{rk : ineq on t}
     With the same notations as above, we have $t_{\alpha} < t_{\beta}$ for any arrows $\alpha, \beta$ in $Q_1$ such that the target of $\alpha$ coincides with the source of $\beta$. Indeed, keeping the same notations as in the proof of Theorem~\ref{thm : exceptional sequences}, we have (by construction) $t_{i \rightarrow k_t} < t_{k_t \rightarrow l}$ for any $1 \leq t \leq r$ and any $l$ (note that $l \neq i$ as $Q$ is acyclic). This shows the desired statement in the case the source of $\alpha$ is $i$. If the source of $\alpha$ is any vertex other than $i$, then the statement is actually dealing with  the subquiver $Q' $ (where the vertex $i$ as well as all arrows  incident to it have been deleted), so that  the corollary follows by induction. 
 \end{rk}

\begin{example}  \label{ex : exceptional sequences}
Consider the (tame) quiver
\(
Q=
\vcenter{\hbox{
\xymatrix@C=0.8em@R=0.6em{
& 2 \ar[rd] & \\
1 \ar[ru] \ar[rd] & & 4 \\
& 3 \ar[ru] &
}
}}
\).
 It is an easy exercise to check that the columns of the following table provide exceptional sequences meet all the requirements  from Theorem~\ref{thm : exceptional sequences}. We write dashed arrows to signify the existence of non zero morphisms in $\mathrm{mod}\,\mathbf{k}Q$ (and simply dashed edges when they are isomorphisms).
 $$ \xymatrixrowsep{6pt}
 \xymatrix{P_4 \ar@{..}[r] &  P_4 \ar@{.}[r]  & P_4 \ar@{.>}[r] & x \ar@{.>}[r] & I_4 \\
 P_3 \ar@{..}[r] &  P_3  \ar@{.>}[r] & y \ar@{.>}[r] & I_4 \ar@{.>}[r] & I_3 \\
 P_2 \ar@{.>}[r] &  P_1  \ar@{.>}[r] & x \ar@{.>}[r] & I_2 \ar@{..}[r] & I_2 \\
 P_1 \ar@{.>}[r] &  y  \ar@{.>}[r] & I_1 \ar@{..}[r] & I_1 \ar@{..}[r] & I_1
 }
 $$
 Here $x$ and $y$  denote indecomposable $\mathbf{k}Q$ modules of respective dimension vectors $\boldsymbol{\dim}x = \alpha_1 + \alpha_2 + \alpha_4$ and $\boldsymbol{\dim}y = \alpha_1 + \alpha_3 + \alpha_4$. Note also that $t_{1 \rightarrow 2} = 0, t_{1 \rightarrow 3} = 1, t_{2 \rightarrow 4} = 2, t_{3 \rightarrow 4} = 3$. 

\end{example}

 \section{The category $\RQ$}
 \label{sec : the category RQ}

  In this section, we construct the category $\RQ$ that will serve as underlying category for the construction of exact complexes in the next section. The definition and properties of the objects and morphisms in $\RQ$ crucially rely on the family of exceptional sequences provided by Theorem~\ref{thm : exceptional sequences}. We also define an additive functor $\mathcal{D}_Q$ allowing to go  from $\RQ$ back to the hammock category defined in Section~\ref{sec : hammock category}. 

   \subsection{Inclusions of multisets}

    For each $i \in I$ and $0 \leq t <m$, we set
    $$ \pi_i := \{x_i^{(s)} , 0 \leq s < m\} \quad \pi_i^{\leq t} := \{x_i^{(s)} , 0 \leq s \leq t \} \quad \pi_i^{<t} := \{x_i^{(s)} , 0 \leq s <t \}  $$
    viewed as sets not multisets, i.e. the elements of $\pi_i$ (resp. $\pi_i^{\leq t} , \pi_i^{<t}$) are \textit{pairwise distinct}. Here by convention $\pi_i^{<0} := \emptyset$. Denoting $m_i := \sharp \pi_i$ we write $ \pi_i := \{x_{i,1} , \ldots , x_{i,m_i}\}$. In particular by construction we have $x_{i,1} \simeq x_i^{(0)} \simeq  x_i$ and $x_{i,m_i} \simeq x_i^{(m-1)} \simeq Sx_i$.   Recall the bijection $\alpha \in Q_1 \mapsto t_{\alpha} \in \{0, \ldots , m-1 \}$ provided by Theorem~\ref{thm : exceptional sequences} (3). For the sake of legibility of certain notations used below, it will be convenient to consider the quiver $\overline{Q}$ whose set of vertices is $I \sqcup \{ \ast \}$ and whose set of arrows consist of that of $Q$ together with one arrow $i \rightarrow \ast$ for every $i$ such that $i$ is a sink in $Q$. We then define an integer $t_{\alpha} \in \{0, \ldots , m \}$ for each arrow $\alpha$ in $\overline{Q}$ by setting 
    $$ \forall \alpha \in \overline{Q}_1 , \enspace t_{\alpha} := \begin{cases}
        t_{\alpha} & \text{if $\alpha \in Q_1$,} \\
        m & \text{otherwise.}
    \end{cases} $$
    We then extend this notation to any \textit{non  lazy} path $p$ in $\overline{Q}$ by setting $ t_p := t_{i \rightarrow j}$ if $p := i \rightarrow j \rightarrow \cdots \rightarrow k$ with $k \in I \sqcup \{ \ast \}$. In what follows  $\mathrm{in}_Q(i)$ (resp. $\mathrm{out}_Q(i)$) will stand for  the set of all oriented paths in $\overline{Q}$ arriving at $i$ (resp. starting at $i$). We then introduce the following multisets:
    \begin{equation} \label{eq : def of barpi}
       \forall \alpha \in \overline{Q}_1, \enspace \barpi_{\alpha} := \pi_{s(\alpha)}^{\leq t_{\alpha}} \sqcup \bigsqcup_{  \substack{ p \in \mathrm{in}(s(\alpha)) \\ \text{$p$ non lazy} }} \pi_{s(p)}^{< t_p} . 
    \end{equation}

 \begin{example}
 In the situation of Example~\ref{ex : exceptional sequences}, we have for example $\pi_4 = \{P_4,x,I_4\}, \pi_3 = \{P_3,y,I_4,I_3\}, \pi_2=\{P_2,P_1,x,I_2\}, \pi_1 = \{P_1,y,I_1\}$.  Then taking for instance $\alpha = 4 \rightarrow *$:
  \begin{align*}
  \barpi_{\alpha} = \pi_4 \sqcup \pi_2^{<t_{2 \rightarrow 4}} \sqcup \pi_3^{<t_{3 \rightarrow 4}} \sqcup \pi_1^{<t_{1 \rightarrow 3}} \sqcup \pi_1^{< t_{1 \rightarrow 2}}  
  = \{P_4,x,I_4,P_2,P_1,P_3,y,P_1\} . 
  \end{align*}
  Note that $P_1$ appears twice in $\barpi_{\alpha}$, which is due to the fact there are two oriented paths from $1$ to $4$ in $Q$ (cf. Corollary~\ref{cor : multisets of projectives} below). 
 \end{example}

The next proposition will be crucial in what follows. 

 \begin{prop} \label{prop : multisets barpi}
    For every arrow $\alpha \in \overline{Q}_1 $, the following inclusions of multisets hold:
    $$  \{x_{s(\alpha)}\}  \sqcup \bigsqcup_{\beta \in \overline{Q}_1 \cap \mathrm{in}(s(\alpha))} \barpi_{\beta} \enspace     \subseteq   \enspace    \barpi_{\alpha} \enspace   \subseteq  \enspace   \pi_{s(\alpha)} \sqcup \bigsqcup_{  \substack{ p \in \mathrm{in}(s(\alpha)) \\ \text{$p$ non lazy} }} \pi_{s(p)}^{< t_p}  \enspace   \subseteq \enspace   H(x_{s(\alpha)}) . $$   
 \end{prop}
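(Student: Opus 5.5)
The three inclusions are of different difficulty, so I will treat them separately. Throughout, fix $\alpha\in\overline{Q}_1$ and write $i:=s(\alpha)$. I will use repeatedly that a non-lazy path $p\in\mathrm{in}(i)$ either is a single arrow $\beta:j\to i$, or factors as $\beta\circ p'$ with $\beta : j\to i$ and $p'\in\mathrm{in}(j)$ non lazy. In the second case $t_p=t_{p'}$, since $t_p$ only depends on the first arrow of $p$. This gives the decomposition
$$\barpi_\alpha=\pi_i^{\leq t_\alpha}\sqcup\bigsqcup_{\beta:j\to i}\Big(\pi_j^{<t_\beta}\sqcup\bigsqcup_{p'\in\mathrm{in}(j)\text{ non lazy}}\pi_{s(p')}^{<t_{p'}}\Big),$$
in which each bracket equals $\barpi_\beta$ with $\pi_j^{\leq t_\beta}$ replaced by $\pi_j^{<t_\beta}$.

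\textbf{First inclusion.} By the decomposition, it suffices to show that the multiset $\{x_i^{(0)}\}\sqcup\bigsqcup_{\beta:j\to i}\big(\pi_j^{\leq t_\beta}\setminus\pi_j^{<t_\beta}\big)$ is contained in the set $\pi_i^{\leq t_\alpha}$.

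First, $\pi_j^{\leq t_\beta}\setminus \pi_j^{<t_\beta}$ is exactly $\{x_j^{(t_\beta)}\}$. Indeed, Theorem~\ref{thm : exceptional sequences}(4), applied to the arrow $\beta$ whose source is $j$, shows that $x_j^{(t_\beta)}\not\simeq x_j^{(s)}$ for all $s<t_\beta$.

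Next, Theorem~\ref{thm : exceptional sequences}(3) gives $x_j^{(t_\beta)}\simeq x_i^{(t_\beta+1)}$. Remark~\ref{rk : ineq on t} gives $t_\beta<t_\alpha$, so $t_\beta+1\leq t_\alpha$ and this element lies in $\pi_i^{\leq t_\alpha}$. The case $\alpha:i\to\ast$ is immediate, since then $t_\alpha=m$.

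It remains to see that the elements $x_i^{(0)}$ and $x_i^{(t_\beta+1)}$, for $\beta$ running over the arrows into $i$, are pairwise non-isomorphic.
\begin{itemize}
\item Theorem~\ref{thm : exceptional sequences}(4) for an arrow $\beta$ with target $i$, applied with $t=0\leq t_\beta<t'=t_\beta+1$, gives $x_i^{(0)}\not\simeq x_i^{(t_\beta+1)}$.
\item For two such arrows with $t_\beta<t_{\beta'}$, the same property for $\beta'$, applied with $t=t_\beta+1\leq t_{\beta'}<t_{\beta'}+1$, separates $x_i^{(t_\beta+1)}$ from $x_i^{(t_{\beta'}+1)}$.
\end{itemize}
Hence the multiset on the left injects into the set $\pi_i^{\leq t_\alpha}$, as required.

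\textbf{Second inclusion.} This is immediate from $\pi_i^{\leq t_\alpha}\subseteq\pi_i$.

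\textbf{Third inclusion.} Here $x_i$ is the projective $P_i$, so $m_{x_i}(y)=\dim\mathrm{Hom}(P_i,y)=(\boldsymbol{\dim}\,y)_i$ for $y$ in $\mathrm{mod}\,\mathbf{k}Q$. We must show that for each $y$, its multiplicity in $\pi_i\sqcup\bigsqcup_p\pi_{s(p)}^{<t_p}$ is at most $(\boldsymbol{\dim}\,y)_i$. The contribution of $\pi_i$ is harmless: every element of $\pi_i$ has a nonzero morphism from $x_i^{(0)}$ by Theorem~\ref{thm : exceptional sequences}(2), and it occurs once since $\pi_i$ is a set.

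For the path contributions, I would use the inductive construction in the proof of Theorem~\ref{thm : exceptional sequences}. Let $k:=s(p)$ and $s<t_p$, so $x_k^{(s)}$ is produced before any arrow out of $k$ is processed. At that stage, $x_k^{(s)}$ is the image of $P_k$ under the BGP reflection functors at the sources already removed, possibly followed by moves of type (3) along arrows into $k$. Each such reflection only adds multiples of the simple root at a vertex different from $k$, so $\boldsymbol{\dim}\,x_k^{(s)}\geq\boldsymbol{\dim}\,P_k$ coordinatewise. In particular $(\boldsymbol{\dim}\,x_k^{(s)})_i\geq p_Q(k,i)$, which matches the number of paths from $k$ to $i$ contributing $\pi_k^{<t_p}$.

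\textbf{Main obstacle.} The difficulty is the case where the same isoclass $y$ occurs as $x_k^{(s)}$ for several different $k$, or also lies in $\pi_i$. Then the separate lower bounds must be added, and I would need
$$(\boldsymbol{\dim}\,y)_i\ \geq\ [y\in\pi_i]+\sum_{k}p_Q(k,i)\,[y\in\pi_k^{<t_{k\to\cdot}}].$$
I would attack this by following the history of $y$ along the rows of the table, using Property~(3). A module can only pass from row $k$ to row $l$ along an arrow $k\to l$, and only at time $t_{k\to l}$. So if $y$ lies in the rows of several vertices $k_1,\dots$ at times before their outgoing arrows are processed, these vertices form a chain of arrows. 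I would then prove by induction on the number of vertices, via the reduction $Q\leadsto Q'$ used in the proof of Theorem~\ref{thm : exceptional sequences}, that the $i$-component of $\boldsymbol{\dim}\,y$ accumulates one contribution per path from each such $k$, plus one when $y\in\pi_i$. This is where I expect most of the work to lie.
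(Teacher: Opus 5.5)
Your arguments for the first two inclusions are correct and agree with the paper's. Your use of Property~(4) for the arrow $\beta$ out of $j$, giving $x_j^{(s)}\not\simeq x_j^{(t_\beta)}$ for $s<t_\beta$, is exactly the form that is needed.

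The third inclusion, however, is not proved, and the step you propose for the path contributions is false. From $s<t_p$ you infer that $x_k^{(s)}$ is produced before any arrow out of $k$ is processed, hence $\boldsymbol{\dim}\,x_k^{(s)}\geq\boldsymbol{\dim}\,P_k$. But the arrows out of $k$ are processed at consecutive times, and $t_p$ is the time of the first arrow of $p$ only. Once an arrow $k\to j$ has been processed, $\boldsymbol{\dim}\,P_j$ has been subtracted from row $k$; these are the vectors $\beta_t$ of Proposition~\ref{prop : sequences for arrows incident to i}. Take the quiver with arrows $k\to a$, $k\to b$, $k\to i$, $a\to i$ and $E^{(0)}=(P_i,P_b,P_a,P_k)$. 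The construction gives $t_{k\to a}=0$, $t_{k\to b}=1$, $t_{k\to i}=2$ and $\boldsymbol{\dim}\,x_k^{(1)}=\alpha_k+\alpha_b+\alpha_i$. This module lies in $\pi_k^{<t_{k\to i}}$, yet its $i$-coordinate is $1<2=p_Q(k,i)$.

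The same oversight affects your count. A path $p$ out of $k$ contributes only $\pi_k^{<t_p}$, which depends on the first arrow of $p$ and may be empty. So what must be bounded by $(\boldsymbol{\dim}\,y)_i$ is $[y\in\pi_i]+\sharp\{p\in\mathrm{in}(i)\text{ non lazy}: y\in\pi_{s(p)}^{<t_p}\}$. In Example~\ref{ex : exceptional sequences}, take $i=4$ and $y=P_1=x_1^{(0)}=x_2^{(1)}$. The path $1\to2\to4$ has $t_p=0$ and contributes nothing, so the true multiplicity is $2=(\boldsymbol{\dim}\,P_1)_4$. Weighting the rows by $p_Q(k,i)$, as your displayed inequality does, gives $p_Q(1,4)+p_Q(2,4)=3$.

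Beyond that, the genuinely hard point is left open: one isoclass can occur in several rows and possibly in $\pi_i$, with no slack left, as $P_1$ just showed. Your guiding principle, that a module passes from row $k$ to row $l$ only along an arrow $k\to l$ at time $t_{k\to l}$, is not supplied by Theorem~\ref{thm : exceptional sequences}. Properties~(3) and~(4) force some coincidences and forbid some repetitions within a row, but they exclude no other coincidences between rows. So the proposal establishes only the first two inclusions.

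The paper does not track rows. It inducts over the vertices using the multiset mesh identity from the proof of Proposition~\ref{prop : mutation for objects in CQ}, $H(x_i)\sqcup H(\tau^{-1}x_i)=\{x_i,\Sigma x_i\}\sqcup\bigsqcup_{j\to i}H(x_j)\sqcup\bigsqcup_{i\to k}H(\tau^{-1}x_k)$. This way, the contributions of paths through an arrow $j\to i$ are absorbed by the inclusion already known into $H(x_j)$, and coincidences never have to be counted.

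Be aware, though, that the paper's step here is itself flawed. The local comparison it reduces to is $\pi_i\sqcup H(\tau^{-1}x_i)\subseteq\{x_i,\Sigma x_i\}\sqcup\bigsqcup_{i\to k}H(\tau^{-1}x_k)$, and this already fails for $Q=1\to2\to3$ at $i=2$. The left side contains $P_1=x_2^{(1)}$ (forced by Property~(3), since $t_{1\to2}=0$) and $S_1$ (a quotient of $\tau^{-1}x_2=I_2$). The right side is $\{P_2,\Sigma P_2\}\sqcup\{S_2,I_2,\Sigma P_3\}$, which contains neither. Throwing away the part of $H(x_j)$ not used by the path terms discards precisely what is needed. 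So that route also has to retain more information before it can close the gap.
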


  \begin{proof}
  The middle inclusion is trivial.  Let us then deal with the leftmost inclusion. Let us fix an arrow $\alpha \in Q_1$ and denote $i := s(\alpha)$.  We can decompose the set of all non lazy paths arriving at $i$ as those of length one i.e. the arrows of target $i$ on the one hand, and the non lazy paths arriving at one of the sources of these arrows on the other hand. Consequently, the desired inclusion boils down to 
      $$ \{x_i\} \sqcup \bigsqcup_{\beta \in \overline{Q}_1 \cap \mathrm{in}(i)} \pi_{s(\beta)}^{\leq t_{\beta}}  \subseteq \pi_i^{\leq t_{\alpha}} \sqcup \bigsqcup_{\beta \in \overline{Q}_1 \cap \mathrm{in}(i)} \pi_{s(\beta)}^{< t_{\beta}} .   $$
      Now for any arrow $\beta \in \overline{Q}_1 \mathrm{in}(i)$, denoting $j := s(\beta)$ we have $t_{\beta}< t_{\alpha}$ by Remark~\ref{rk : ineq on t} so that $x_i^{(t_{\beta}+1)} \in \pi_i^{\leq t_{\alpha}}$ and moreover $ x_i^{(t_{\beta}+1)} \simeq x_j^{(t_{\beta})}$
      by Theorem~\ref{thm : exceptional sequences} (3). Note also that Theorem~\ref{thm : exceptional sequences} (4) implies the two following things: firstly, $x_j^{(t_{\beta})} \not \simeq x_j^{(t_{\beta}+1)}$, which implies that $ x_j^{(t_{\beta})} \sqcup  \pi_j^{< t_{\beta}}  =  \pi_j^{\leq t_{\beta}}$; 
      secondly,  the $x_i^{(t_{\beta}+1)}  , \beta \in Q_1 \cap \mathrm{in}(i)$ are pairwise non isomorphic and also not isomorphic to $ x_i^{(0)} \simeq x_i$, so that $\pi_i^{\leq t_{\alpha}} \supseteq \{ x_i \} \sqcup  \{   x_i^{(t_{\beta}+1)} ,  \beta \in \overline{Q}_1 \cap \mathrm{in}(i) \}  $.
      Therefore we obtain
      \begin{align*}
      \pi_i^{\leq t_{\alpha}} \sqcup \bigsqcup_{\beta \in \overline{Q}_1 \cap \mathrm{in}(i)} \pi_{s(\beta)}^{< t_{\beta}} &\supseteq \{x_i\} \sqcup \bigsqcup_{\beta \in \overline{Q}_1 \cap \mathrm{in}(i)}  \left( x_i^{(t_{\beta}+1)} \sqcup  \pi_{s(\beta)}^{< t_{\beta}}     \right) \\
      & = \{x_i\} \sqcup \bigsqcup_{\beta \in \overline{Q}_1 \cap \mathrm{in}(i)} \left( x_{s(\beta)}^{(t_{\beta})} \sqcup  \pi_{s(\beta)}^{< t_{\beta}}   \right) = \{x_i\} \sqcup \bigsqcup_{\beta \in \overline{Q}_1 \cap \mathrm{in}(i)}  \pi_{s(\beta)}^{\leq t_{\beta}} . 
      \end{align*}
      We now move on to the rightmost inclusion. Still denoting $i := s(\alpha)$, we prove the desired inclusion by induction on the total dimension of the indecomposable projective $\mathbf{k}Q$-module at $i$. The initial step of the induction corresponds to the case where  $i$ is a sink in $Q$ (this number is then equal to $1$). In that case by construction there is a single arrow $\alpha : i \rightarrow \ast$ in $\overline{Q}_1$ and  we have $\barpi_{\alpha} = \pi_i^{\leq m} = \{x_i^{(t)} , 0 \leq t \leq m \}$. The fact that this set is included into $H(x_i)$ simply follows from Theorem~\ref{thm : exceptional sequences} (2) with $t=0$ and $t'$ running over $0, \ldots , m\}$. We now assume $i$ is not a sink in $Q$ and suppose the claimed inclusion holds for all vertices $j$ whose indecomposable projective has a total dimension strictly less than that of $i$. In particular, the induction assumption can be applied to all  vertices $j$ such that there is an arrow from $i$ to $j$ in $Q$. Therefore, applying Proposition~\ref{prop : mutation for objects in CQ} we get 
      \begin{align*}
         H(x_i) \sqcup H(\tau^{-1}x_i) & = \{x_i , \Sigma x_i\} \sqcup \bigsqcup_{j \rightarrow i}H(x_j) \sqcup \bigsqcup_{i \rightarrow k} H(\tau^{-1}x_k) \\
         &\supseteq \{x_i , \Sigma x_i\} \sqcup \bigsqcup_{j \rightarrow i}  \left( \pi_j \sqcup \bigsqcup_{\substack{ p \in \mathrm{in}(j) \\ \text{$p$ non lazy}}}  \pi_{s(p)}^{< t_p} \right)  \sqcup \bigsqcup_{i \rightarrow k} H(\tau^{-1}x_k)  \\
         &  \supseteq \{x_i , \Sigma x_i\} \sqcup \bigsqcup_{ \substack{p \in \mathrm{in}(i)\\ \text{$p$ non lazy}}} \pi_{s(p)}^{< t_p}  \sqcup \bigsqcup_{i \rightarrow k} H(\tau^{-1}x_k) . 
      \end{align*} 
    Thus to conclude it suffices to prove that 
    $$ \pi_i \sqcup H(\tau^{-1}x_i) \subseteq \{x_i , \Sigma x_i\}  \sqcup \bigsqcup_{i \rightarrow k}H(\tau^{-1}x_k) .   $$
    First of all we can note that $x_i \simeq x_i^{(0)} \in \pi_i$ and of course $x_i \notin H(\tau^{-1}x_i)$ hence the multiset on the left hand side contains exactly one copy of $x_i$. Similarly, $\Sigma x_i \notin \pi_i$ (as it does not even belong to $\mathrm{mod}\,(\mathbf{k}Q)$) and appears exactly once in $H(\tau^{-1}x_i)$ as $\Sigma x_i \simeq S \tau^{-1}x_i$. 
    So we now fix $y \in \pi \sqcup H(\tau^{-1}x_i)$ and assume  $y \not\simeq x_i$ and $y \not \simeq \Sigma x_i$. In particular the Auslander-Reiten formula (equivalently the fact that $S = \tau \Sigma$ is a Serre functor in $\DbRepQ$) yields
$$\mathrm{Ext}^1(\tau^{-1}x_i , y) \simeq D \mathrm{Hom}(y,x_i) = 0 .  $$
    Consequently  the number of times $y$ may appear in the multiset of the left hand side is 
    \begin{align*}
         \delta_{y \in \pi_i} +  \dim \mathrm{Hom}(\tau^{-1}x_i , y) &= \delta_{y \in \pi_i} + \langle \tau^{-1}x_i,y \rangle   = \delta_{y \in \pi_i} + \langle \tau^{-1}I_i , y \rangle + \sum_{i \rightarrow k} \langle \tau^{-1}x_k , y \rangle . 
    \end{align*}
    If $y \in \pi_i$, then $y \simeq x_i^{(t)}$ with $t>0$ because by assumption $y \not \simeq x_i \simeq x_i^{(0)}$. Hence Theorem~\ref{thm : exceptional sequences} (2) together with the Auslander-Reiten formula  implies 
    $$ \langle \tau^{-1}I_i , y \rangle  = - \langle y , I_i \rangle = - \langle x_i , y \rangle < 0.  $$
    Therefore the number of times $y$ appears on the left hand side is 
    $$ 1  + \langle \tau^{-1}I_i , y \rangle + \sum_{i \rightarrow k} \langle \tau^{-1}x_k , y \rangle  \leq \sum_{i \rightarrow k} \langle \tau^{-1}x_k , y \rangle \leq \sum_{i \rightarrow k} \dim \mathrm{Hom} \left(  \tau^{-1}x_k , y \right)  $$
    which is the number of times $y$ appears in the multiset $\bigsqcup_{i \rightarrow k} H(\tau^{-1}x_k)$. If now $y \notin \pi_i$ then $\delta_{y \in \pi_i} = 0$ while $\langle \tau^{-1}I_i , y \rangle  \leq \mathrm{\dim} \mathrm{Hom}(\tau^{-1}I_i,y) = 0$ as $y \in H(\tau^{-1}x_i)$ and $y \not \simeq \tau^{-1}I_i \simeq \tau^{-1}Sx_i = S \tau^{-1}x_i = \Sigma x_i$. The conclusion thus follows identically as in the case $y \in \pi_i$. This concludes the proof of the proposition. 

  \end{proof}
    
 We now give a couple of useful corollaries. Recall that  if $\gamma := \sum_i a_i \alpha_i$ with $a_i \geq 0$ for each $i \in I$, we denote by $Z_{\gamma}$ the multiset consisting of $a_i$ copies of $x_i$ for every $i \in I$.

  \begin{cor} \label{cor : multisets of projectives}
     Let $\alpha \in \overline{Q_1}$ and $i := s(\alpha)$. Then $(\barpi_{\alpha})_{\mid \mathrm{Ind}(\mathrm{Proj}\mathbf{k}Q) } = Z_{\boldsymbol{\dim}I_i}$.  
  \end{cor}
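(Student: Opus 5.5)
We have to show that the projective part of $\barpi_{\alpha}$ contains exactly $p_Q(j,i)$ copies of $x_j$ for each $j \in I$, since $\boldsymbol{\dim} I_i = \sum_j p_Q(j,i)\alpha_j$. The plan is to count, in the defining union~\eqref{eq : def of barpi}, the contributions of each piece $\pi_{s(\alpha)}^{\leq t_\alpha}$ and $\pi_{s(p)}^{<t_p}$ separately.

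\textbf{Step 1: each piece contains at most one projective.} I claim that for every $j \in I$ the only indecomposable projective in $\pi_j$ is $x_j = x_j^{(0)}$. For $t \geq 1$, property (2) of Theorem~\ref{thm : exceptional sequences} gives $\mathrm{Hom}(x_j^{(0)},x_j^{(t)}) \neq 0$. Now suppose $x_j^{(t)} \simeq P_k$ with $k \neq j$. Then $\mathrm{Hom}(P_j,P_k) \neq 0$ forces a path $k \to \cdots \to j$. One could then try to get a contradiction with property (4). I would rather argue via the construction in the proof of Theorem~\ref{thm : exceptional sequences}, by induction on $n$. For $0 \leq t \leq r$, Proposition~\ref{prop : sequences for arrows incident to i} shows $\boldsymbol{\dim} x_l^{(t)} = s_{\beta_t}\boldsymbol{\dim}x_l^{(0)}$ or $\beta_t$. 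Lemma~\ref{lem : Euler forms} shows these differ from $\boldsymbol{\dim} x_l^{(0)}$ by a nonnegative multiple of $\beta_t$, whose $i$-component is $1$. Since $i$ is a source, no projective other than $P_i$ has support at $i$, and $P_i$ only sits in position $n$ at $t=0$. So for $t \geq 1$ a module $x_l^{(t)}$ is projective only when it equals $x_l^{(0)}$. For $t > r$ the modules are $\Sigma_i$ of modules over $Q'$, or $I_i$. Their projective members are projectives of $Q'$ that remain projective after reflection, so the induction hypothesis for $Q'$ applies. This step, namely pinning down exactly which $x_j^{(t)}$ are projective, is the main obstacle. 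An alternative is to show directly from property (2) and the rigidity of exceptional sequences that a repeated projective index must coincide with $x_j$.

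\textbf{Step 2: decide when $x_j$ actually lies in the piece.} Given Step 1, the piece $\pi_{s(\alpha)}^{\leq t_\alpha}$ always contains $x_{s(\alpha)}$, since it contains $x^{(0)}$. A piece $\pi_{s(p)}^{<t_p}$ contains exactly one projective, $x_{s(p)}$, precisely when $t_p > 0$. So I must show $t_p \geq 1$ for every non-lazy path $p$ arriving at $i$, and handle separately the case where the first arrow of $p$ has $t = 0$.

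\textbf{Step 3: conclude by counting paths.} With the convention that $t_p$ is the $t$-value of the first arrow of $p$, suppose the first arrow $\gamma$ of $p$ has $t_\gamma = 0$. Then $\pi_{s(p)}^{<0} = \emptyset$ and the piece contributes nothing. To handle this I would reorganise the count. Combining the leftmost inclusion of Proposition~\ref{prop : multisets barpi} with an induction over $\overline{Q}$, taken in increasing order of $\dim I_i$, it should suffice to prove
\[
\#\{x_j \text{ in } \barpi_\alpha\} \;=\; \delta_{ij} + \sum_{\beta \in \overline{Q}_1,\ t(\beta) = i} \#\{x_j \text{ in } \barpi_\beta\}.
\]
This recursion is exactly the one satisfied by $p_Q(j,i)$, and the leftmost inclusion gives the inequality $\geq$ immediately.

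For the reverse inequality I would compare the two sides of that inclusion directly. The only pieces in $\barpi_\alpha$ not accounted for on the left are $\pi_i^{\leq t_\alpha} \setminus \bigl(\{x_i\} \cup \{x_i^{(t_\beta+1)}\}\bigr)$ together with the elements $x_{s(\beta)}^{(t_\beta)}$. Step 1 shows these contain no projective other than $x_i$, and $x_i$ is already counted by $\delta_{ij}$. This gives equality, and hence $(\barpi_\alpha)_{\mid \mathrm{Ind}(\mathrm{Proj}\,\mathbf{k}Q)} = Z_{\boldsymbol{\dim} I_i}$.
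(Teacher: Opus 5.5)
Your lower bound is correct and is the paper's own: the leftmost inclusion of Proposition~\ref{prop : multisets barpi}, with induction on $\boldsymbol{\dim}I_i$, gives $Z_{\boldsymbol{\dim}I_i} \subseteq (\barpi_\alpha)_{\mid \mathrm{Ind}(\mathrm{Proj}\,\mathbf{k}Q)}$.

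\textbf{The upper bound fails because Step 1 is false.} As soon as $Q$ has an arrow, some $\gamma \in Q_1$ has $t_\gamma = 0$, since $\alpha \mapsto t_\alpha$ is a bijection onto $\{0,\ldots,m-1\}$. Property (3) of Theorem~\ref{thm : exceptional sequences} then gives $x_{t(\gamma)}^{(1)} \simeq x_{s(\gamma)}^{(0)} = P_{s(\gamma)}$. This is a projective in $\pi_{t(\gamma)}$ different from $x_{t(\gamma)}$.

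Example~\ref{ex : exceptional sequences} shows this concretely. There $\pi_2 = \{P_2,P_1,x,I_2\}$, and the piece $\pi_2^{<t_{2\to 4}} = \{P_2,P_1\}$ contains two projectives. One of the two copies of $P_1$ in $\barpi_{4\to\ast}$ comes from this piece, while the piece attached to the path $1\to 2\to 4$ is empty.

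The faulty step in your argument is the assertion that $P_i$ only sits in position $n$ at $t=0$. Corollary~\ref{cor : properties satisfied for arrows incident to i} gives $x_{k_1}^{(1)} \simeq x_n^{(0)} = P_i$. So adding a positive multiple of $\beta_1$ to $\boldsymbol{\dim}P_{k_1}$ does produce a projective. Consequently Step 2 is wrong too. In particular, you cannot show $t_p \geq 1$ for all non-lazy paths, since the arrow with $t_\gamma = 0$ always exists. The sentence ``Step 1 shows these contain no projective other than $x_i$'' in Step 3 is therefore unsupported. The multiset $\pi_i^{\leq t_\alpha} \setminus (\{x_i\} \cup \{x_i^{(t_\beta+1)}\})$ does in fact contain no projective. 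However, that is a consequence of the corollary, not of anything you have established.

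\textbf{The missing idea is the rightmost inclusion} $\barpi_\alpha \subseteq H(x_i)$ of Proposition~\ref{prop : multisets barpi}. The projective $x_j$ occurs in $H(x_i)$ exactly $\dim \mathrm{Hom}(P_i,P_j) = p_Q(j,i)$ times. Hence the projective part of $H(x_i)$ is exactly $Z_{\boldsymbol{\dim}I_i}$. This yields $(\barpi_\alpha)_{\mid \mathrm{Ind}(\mathrm{Proj}\,\mathbf{k}Q)} \subseteq Z_{\boldsymbol{\dim}I_i}$ at once, without determining which $x_j^{(t)}$ are projective. Combined with your lower bound, this is precisely the paper's proof.
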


\begin{proof}
 For each $j \in I$, the indecomposable projective  $x_j$ appears exactly $p(j,i)$ times in the  hammock multiset $H(x_i)$. Consequently, $Z_{\boldsymbol{\dim}I_i} = H(x_i)_{\mid \mathrm{Ind}(\mathrm{Proj}\mathbf{k}Q)}$. Hence the inclusion    $(\barpi_{\alpha})_{\mid \mathrm{Ind}(\mathrm{Proj}\mathbf{k}Q) } \subseteq Z_{\boldsymbol{\dim}I_i}$ follows from the right most inclusion of Proposition~\ref{prop : multisets barpi}. The converse inclusion is proved by induction on the total dimension of the indecomposable injective $\mathbf{k}Q$ module at $i$. Namely,  if $i$ is a source in $Q$ then the inclusion is trivial because  $Z_{\boldsymbol{\dim}I_i} = \{x_i\}$ while it follows from~\eqref{eq : def of barpi} that  $\barpi_{\alpha} \supseteq \pi_i^{\leq t_{\alpha}} \ni x_i^{(0)} \simeq x_i$. If $i$ is not a source then we can write 
     $$ Z_{\boldsymbol{\dim}I_i} = \{x_i\} \sqcup \bigsqcup_{j \rightarrow i} Z_{\boldsymbol{\dim}I_j} =  \{x_i\} \sqcup \bigsqcup_{\beta \in Q_1 \cap \mathrm{in}(i)} Z_{\boldsymbol{\dim}I_{s(\beta)}} $$
     so the desired inclusion immediately follows from the induction assumption together with the leftmost inclusion of Proposition~\ref{prop : multisets barpi}. 
\end{proof}

 \begin{cor} \label{cor : unique decomposition}
    Let $\alpha_1, \ldots , \alpha_r, \beta_1, \ldots , \beta_s$ be arrows in $\overline{Q_1}$ and let $Z$ and $Z'$ be multisets having empty intsersection with $\mathrm{Ind}(\mathrm{Proj}\mathbf{k}Q)$. Assume $Z \sqcup \barpi_{\alpha_1} \sqcup \cdots \sqcup \barpi_{\alpha_r} = Z' \sqcup \barpi_{\beta_1} \sqcup \cdots \sqcup \barpi_{\beta_s} $. Then $Z = Z',r=s$ and $\{\alpha_1, \ldots , \alpha_r\} = \{\beta_1 , \ldots, \beta_s\}$.
 \end{cor}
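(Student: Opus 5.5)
Restricting both sides of the assumed identity to $\mathrm{Ind}(\mathrm{Proj}\,\mathbf{k}Q)$ is the natural first step. Since $Z$ and $Z'$ contain no projectives, Corollary~\ref{cor : multisets of projectives} turns the hypothesis into an identity of multisets of projectives:
$$ \bigsqcup_{a=1}^r Z_{\boldsymbol{\dim} I_{s(\alpha_a)}} = \bigsqcup_{b=1}^s Z_{\boldsymbol{\dim} I_{s(\beta_b)}}, $$
or, equivalently, $\sum_a \boldsymbol{\dim} I_{s(\alpha_a)} = \sum_b \boldsymbol{\dim} I_{s(\beta_b)}$ in $\bigoplus_i \mathbb{Z}\alpha_i$. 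The dimension vectors of the indecomposable injectives are linearly independent: ordering $I$ compatibly with $Q$, the matrix $(p_Q(j,i))$ is unitriangular because $Q$ is acyclic. Hence the multisets $\{s(\alpha_a)\}$ and $\{s(\beta_b)\}$ of source vertices coincide, and in particular $r=s$.

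It remains to recover the arrows themselves, not just their sources, and then $Z=Z'$. Fix a vertex $i$ and collect the arrows $\alpha_a$ (resp. $\beta_b$) with source $i$; their numbers agree. Among the $\barpi_\gamma$ with $s(\gamma)=i$, only the term $\pi_i^{\leq t_\gamma}$ depends on $\gamma$. The part $\bigsqcup_{p} \pi_{s(p)}^{<t_p}$ over non-lazy paths into $i$ is common to all of them and cancels from both sides. After this cancellation we are left with
$$ Z \sqcup \bigsqcup_a \pi_{s(\alpha_a)}^{\leq t_{\alpha_a}} = Z' \sqcup \bigsqcup_b \pi_{s(\beta_b)}^{\leq t_{\beta_b}}. $$

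The next step is to separate the contributions of different vertices. The sets $\pi_i$ for distinct $i$ can share objects; in Example~\ref{ex : exceptional sequences} both $\pi_3$ and $\pi_4$ contain $I_4$. So the arrows cannot simply be read off object by object. I would argue by a decreasing induction that peels off the largest $t_\gamma$ first. Take the arrow $\gamma$ among the $\alpha$'s and $\beta$'s with maximal $t_\gamma$, and consider the object $x_{s(\gamma)}^{(t_\gamma)}$. Theorem~\ref{thm : exceptional sequences}(4), together with the way the sequences change at step $t_\gamma$ (property (3)), should show that this object is new at stage $t_\gamma$ at vertex $s(\gamma)$. Comparing multiplicities of this object on both sides should then force $\gamma$ to appear in both lists, after which $\pi_{s(\gamma)}^{\leq t_\gamma}$ can be cancelled and the induction continues. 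Once the two lists of arrows agree, cancelling all the $\barpi$'s gives $Z=Z'$.

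The main obstacle is this multiplicity comparison. A non-projective object may lie in $\pi_j$ for several $j$, and it may also occur in $Z$ or $Z'$, which are arbitrary. If the argument above does not close, I would first try replacing the multiplicity count by a count of dimension vectors weighted by a suitable linear form, for instance pairing with $\beta_t$ as in Lemma~\ref{lem : Euler forms}, chosen to detect the index $t_\gamma$. Failing that, I would fall back on an induction on the number of vertices via the recursive construction in the proof of Theorem~\ref{thm : exceptional sequences}, treating the source $i$ first and then passing to $Q'$.
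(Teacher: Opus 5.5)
Your first paragraph is correct and follows the paper. Restricting to $\mathrm{Ind}(\mathrm{Proj}\,\mathbf{k}Q)$, Corollary~\ref{cor : multisets of projectives} and the unitriangularity of $(p_Q(j,i))$ give $r=s$ and equality of the multisets of sources. Your cancellation of the common path contributions $\bigsqcup_p \pi_{s(p)}^{<t_p}$ is also legitimate.

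The gap is the step you flag yourself. The multiplicity of $x_{s(\gamma)}^{(t_\gamma)}$ cannot force $\gamma$ into both lists, because that object may be supplied by $Z$ or $Z'$. No refinement (linear forms, induction on vertices) can repair this, because the statement is false as written. A counterexample:
\begin{itemize}
\item Take $Q = 2 \leftarrow 1 \rightarrow 3$, and let $\alpha,\alpha'$ be the two arrows out of the source $1$, labelled so that $t_\alpha = 0 < t_{\alpha'} = 1$.
\item Since $1$ is a source, $\barpi_\alpha = \pi_1^{\leq 0} = \{P_1\}$ and $\barpi_{\alpha'} = \pi_1^{\leq 1} = \{P_1, x_1^{(1)}\}$.
\item By Proposition~\ref{prop : sequences for arrows incident to i}, $x_1^{(1)}$ has dimension vector $\boldsymbol{\dim} P_1 - \boldsymbol{\dim} P_{t(\alpha)} = \alpha_1 + \alpha_{t(\alpha')}$. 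So $x_1^{(1)} \simeq I_{t(\alpha')}$, which is not projective.
\item Set $Z := \{I_{t(\alpha')}\}$ and $Z' := \emptyset$. Then $Z \sqcup \barpi_\alpha = Z' \sqcup \barpi_{\alpha'}$, yet $\alpha \neq \alpha'$ and $Z \neq Z'$.
\end{itemize}
The same happens whenever some vertex $i$ has two outgoing arrows $\alpha,\alpha'$ in $\overline{Q}$ with $t_\alpha<t_{\alpha'}$ and $\pi_i^{\leq t_{\alpha'}}\setminus\pi_i^{\leq t_\alpha}$ free of projectives.

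The paper's own proof has the same defect. After choosing the sink $i$, it passes to $\barpi_{\alpha_1}\sqcup\cdots\sqcup\barpi_{\alpha_{r'}} = \barpi_{\beta_1}\sqcup\cdots\sqcup\barpi_{\beta_{r'}}$, silently discarding $Z$ and $Z'$. Its comparison of the nested sets $\pi_i^{\leq t}$ is only valid once they are gone.

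What survives is exactly what your first paragraph proves: $r=s$ and equality of the multisets $\{s(\alpha_a)\}$ and $\{s(\beta_b)\}$. This is enough for $\boldsymbol{d}(M)$ to be well defined. The full conclusion does hold when every vertex has exactly one outgoing arrow in $\overline{Q}$. Then the arrows are determined by their sources, and $Z=Z'$ follows by cancelling the $\barpi$'s. In the example above, $M_{\alpha'} \simeq I_{t(\alpha')}\otimes M_\alpha$. So an object of $\RQ$ genuinely does not have a unique expression $X\otimes M_{\alpha_1}\otimes\cdots\otimes M_{\alpha_r}$ with $X$ neutral.
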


  \begin{proof}
     Taking the intersection with $\mathrm{Ind}(\mathrm{Proj}(\mathbf{k}Q))$, Corollary~\ref{cor : multisets of projectives} yields $\sum_{k=1}^r \boldsymbol{\dim}I_{s(\alpha_k)} = \sum_{l=1}^s \boldsymbol{\dim}I_{s(\beta_l)}$. Thus it follows from the linear independence of the dimension vectors of the  indecomposable injective $\mathbf{k}Q$-modules that $r=s$ and (up to relabeling) $s(\alpha_k) = s(\beta_k) := i_k$ for each $1 \leq k \leq r$. Let $i$ (without loss of generality $i = i_1$) be a sink in the smallest (full) subquiver  $Q'$ of $Q$ containing $i_1, \ldots , i_r$. Then  it follows from the right most inclusion of Proposition~\ref{prop : multisets barpi} that  the projective $x_i$ appears exactly once in each $\barpi_{\alpha}$ such that $s(\alpha)=i$ and does  not appear in of the $\barpi_{\alpha}$ such that $s(\alpha) \neq i$. Thus we get $\barpi_{\alpha_1} \sqcup \cdots \sqcup \barpi_{\alpha_{r'}} = \barpi_{\beta_1} \sqcup \cdots \sqcup \barpi_{\beta_{r'}} $ where all the arrows involved here have the same source $i$. From~\eqref{eq : def of barpi}, this can be obviously rewritten as $\pi_i^{\leq t_{\alpha_1}} \sqcup \cdots \sqcup \pi_i^{\leq t_{\alpha_{r'}}} =\pi_i^{\leq t_{\beta_1}} \sqcup \cdots \sqcup \pi_i^{\leq t_{\beta_{r'}}}$.  Now it follows from the construction of the exceptional sequences given by Theorem~\ref{thm : exceptional sequences} that $\pi_i^{t_{\alpha}} $ is strictly contained in $\pi_i^{t_{\beta}}$ if $s(\alpha) = s(\beta) = i$ and $t_{\alpha} < t_{\beta}$. From this we see that $\max_{\alpha}t_{\alpha} = \max_{\beta}t_{\beta}$ so that this equality of multisets can be simplified. Continuing inductively we get (up to relabeling) $\alpha_1 = \beta_1 , \ldots , \alpha_{r'} = \beta_{r'}$. In other words we proved that $\alpha_k = \beta_k$ for every $k$ such that $s(\alpha_k) = s(\beta_k) = i$. Repeating the same arguments inductively, we conclude that $\alpha_k = \beta_k$ for all $1 \leq k \leq r$. The equality $Z=Z'$ then follows trivially. 
  \end{proof}

   \subsection{The category $\mathcal{R}_Q$}

 We now introduce the category that will serve as underlying category for the construction of exact chain complexes in the next section. Let $\tRQ$ denote the monoidal $\mathbf{k}$-category given by 
    $$ \tRQ := \bigoplus_{n \in \mathbb{Z}_{\geq 0}} \mathrm{mod}\, \left( \mathbf{k}Q^{\otimes n} \right) .  $$
The category $\tRQ$ is an abelian symmetric monoidal $\mathbf{k}$-category whose indecomposable objects are all the tensor products of the form $z_1 \otimes_{\mathbf{k}} \cdots \otimes_{\mathbf{k}} z_r$ where $z_1, \ldots , z_r$ are indecomposable objects in $\mathrm{mod}\, \mathbf{k}Q$. We outline on the fact that the tensor product is taken over the base field. We now fix an arbitrary total order on the set of isomorphism classes of indecomposable objects in $\mathrm{mod}\, \mathbf{k}Q$ and introduce a distinguished family $\{M_{\alpha} \}_{ \alpha \in Q_1}$ of indecomposable objects in $\tRQ$:
\begin{equation} \label{eq : def of Malpha}
    \forall \alpha \in Q_1, \enspace M_{\alpha} :=  \overrightarrow{ \bigotimes_{ z \in \barpi_{\alpha}}} z
\end{equation}
where the $\barpi_{\alpha} , \alpha \in Q_1$ are the multisets given by~\eqref{eq : def of barpi} and the arrow means that the tensor product is taken following the increasing order with respect to the chosen order $<$. Any object isomorphic to a tensor product  of the form $M_{\alpha_1} \otimes \cdots \otimes M_{\alpha_r}$ will be called a dominant object. On the other hand, an indecomposable object in $\tRQ$ of the form $z_1 \otimes \cdots  \otimes z_l$ will be called \textbf{neutral} if $\{z_1 , \ldots , z_l\} \cap \mathrm{Ind}(\mathrm{Proj \mathbf{k}Q}) = \emptyset$.  
We now define, for each $\alpha \in \overline{Q_1}$ a distinguished morphism $\eta_{\alpha}  \in \mathrm{Hom}_{\tRQ} \left( M_{\alpha} , \mu_{s(\alpha)}M_{\alpha} \right)$. Namely, writing $\pi^{\leq \alpha} := \{x_{i,1}, \ldots , x_{i,m} \}$, by Theorem~\ref{thm : exceptional sequences} we can choose a non-trivial morphism $f_{k, \alpha} \in \mathrm{Hom}_{\mathbf{k}Q}(x_{i,k},x_{i,k+1})$ for each $1 \leq k < m$ where $i := s(\alpha)$, as well as a non trivial morphism $f_{m, \alpha} \in \mathrm{Hom}_{\mathbf{k}Q}(x_{i,m},Sx_i)$. Then the pure tensor 
$$ \eta_{\alpha} := (f_{1, \alpha} \otimes \cdots \otimes f_{m, \alpha}) \otimes \cdots  $$
where the $\cdots$ stand for tensor products of identity morphisms, defines a non trivial morphism  in $\tRQ$
$$ \eta_{\alpha}^{M_{\alpha}} : \enspace  M_{\alpha} \simeq (x_{i,1} \otimes \cdots \otimes x_{i,m}) \otimes \cdots  \longrightarrow   (x_{i,2}  \otimes \cdots x_{i,m} \otimes Sx_i) \otimes  \cdots \simeq \mu_i M_{\alpha} . $$
 In other terms, using the diagrammatic description from Example~\ref{ex : morphisms in HQ}, the morphism $\eta_{\alpha}^{M_{\alpha}}$ can be represented as follows:
\begin{center}
    \begin{tikzpicture}
        
\node (a) at (0,2) {$x_{i,1}$};
\node (b) at (1.5,2) {$x_{i,2}$};
\node (c) at (3,2) {$\cdots $};
\node (c') at (4.5,2) {$\cdots$};
\node (d) at (6,2) {$x_{i,m}$};
\node (e) at (7.5,2) {$\cdots$};

\node (x) at (0,0) {$Sx_{i,1}$};
\node (y) at (1.5,0) {$x_{i,2}$};
\node (z) at (3,0) {$\cdots $};
\node (z') at (4.5,0) {$\cdots$};
\node (t) at (6,0) {$x_{i,m}$};
\node (u) at (7.5,0) {$\cdots$};

\node (plouf) at (4,1) {$\cdots$};

 \draw[thick,->] (a)--(y);
 \draw[thick,->] (b)--(z);
 \draw[thick,->] (c')--(t);
 \draw[thick,->] (d)--(x.north);
 \draw[thick,double distance = 2pt] (e)--(u);

    \end{tikzpicture}
\end{center}
 
We now define the category $\RQ$ as the smallest $\mathbf{k}$-linear monoidal subcategory of $\tRQ$ whose class of objects contains all the  objects isomorphic to $M_{\alpha} , \alpha \in \overline{Q}_1$ as well as all the neutral objects, and whose class of morphisms  contains the morphism $\eta_{\alpha}^{M_{\alpha}}$ for every $\alpha \in \overline{Q_1}$ as well as  all possible braiding isomorphisms. In particular, if $X$ is a neutral object then there is no (non zero, non iso) morphism of domain $X$ in $\RQ$. 

 \begin{lem} \label{lem : morphism eta}
   Let $\alpha \in \overline{Q}_1$, let $j := s(\alpha)$ and denote by  $M'_{\alpha}$  the codomain of $\eta_{\alpha}^{M_{\alpha}}$. Then we have $M'_{\alpha} \simeq X \otimes \bigotimes_{\beta \in Q_1 \cap \mathrm{in}(i)} M_{\beta}$ where $X$ is a neutral object. Moreover, we have  $\eta_{\beta}^{M'_{\alpha}} \circ \eta_{\alpha}^{M_{\alpha}} = 0$ for every $\beta \in Q_1 \cap \mathrm{in}(i)$.  
 \end{lem}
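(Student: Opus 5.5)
Write $i := s(\alpha)$ (the statement mixes $i$ and $j$; I read both as $s(\alpha)$). I will prove the two claims in turn, both at the level of multisets using Proposition~\ref{prop : multisets barpi}.

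\textbf{The codomain.} By construction of $\eta_\alpha$, the multiset underlying $M'_\alpha$ is
\[
(\barpi_\alpha\setminus\{x_i\})\sqcup\{Sx_i\}.
\]
The morphism $\eta_\alpha$ shifts the chain $x_{i,1}\to x_{i,2}\to\cdots\to x_{i,m}\to Sx_i$. So we remove $x_{i,1}\simeq x_i$ and add $Sx_i$, and all other factors stay fixed. The leftmost inclusion of Proposition~\ref{prop : multisets barpi} gives
\[
\barpi_\alpha = \{x_i\}\sqcup \bigsqcup_{\beta\in \overline{Q}_1\cap\mathrm{in}(i)}\barpi_\beta \sqcup Z
\]
for some multiset $Z$. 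Hence
\[
M'_\alpha \simeq \bigotimes_{\beta}M_\beta \otimes X ,
\]
where $X$ is the tensor product of the elements of $Z\sqcup\{Sx_i\}$. It remains to show that $X$ is neutral, i.e. that $Z$ contains no projective; $Sx_i=I_i$ is not projective unless $i$ is isolated, a degenerate case I will handle separately. For this I count projectives. Corollary~\ref{cor : multisets of projectives} gives $(\barpi_\alpha)_{\mid\mathrm{Proj}}=Z_{\boldsymbol{\dim}I_i}$ and $(\barpi_\beta)_{\mid\mathrm{Proj}}=Z_{\boldsymbol{\dim}I_{s(\beta)}}$. The identity $Z_{\boldsymbol{\dim}I_i}=\{x_i\}\sqcup\bigsqcup_{\beta}Z_{\boldsymbol{\dim}I_{s(\beta)}}$, used in the proof of that corollary, then shows that $\{x_i\}\sqcup\bigsqcup_\beta\barpi_\beta$ already exhausts all projectives of $\barpi_\alpha$. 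Therefore $Z$ is neutral. Since objects of $\tRQ$ are determined up to braiding isomorphism by their multiset of factors, this gives the claimed isomorphism.

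\textbf{The vanishing of the composite.} Fix $\beta$ with target $i$ and let $j := s(\beta)$. The morphism $\eta_\beta^{M'_\alpha}$ acts as the chain $f_{\cdot,\beta}$ on the factors $\pi_j^{\leq t_\beta}$ sitting inside the copy of $M_\beta$ in $M'_\alpha$, and as the identity elsewhere. Both morphisms are pure tensors, so the composite is a pure tensor of the factorwise composites, up to the braiding identifying factors. The key point is that some tensor factor of the composite is a composite $\mathrm{mod}\,\mathbf{k}Q$-map that vanishes.

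Theorem~\ref{thm : exceptional sequences}(3) gives $x_i^{(t_\beta+1)}\simeq x_j^{(t_\beta)}$, and this object is the top of the chain for $\beta$. The step $\eta_\alpha$ maps some $x_i^{(t)}$ to it. I expect that the copy of $x_j^{(t_\beta)}$ in $M'_\alpha$ which is a target of $\eta_\alpha$ is exactly the one that $\eta_\beta$ then sends on to $Sx_j$. In that case the composite factor is a map
\[
x_i^{(t_\beta)} \to x_i^{(t_\beta+1)}\simeq x_j^{(t_\beta)} \to \cdots ,
\]
and Theorem~\ref{thm : exceptional sequences}(3), namely $\mathrm{Hom}(x_j^{(t_\beta)},x_i^{(t_\beta+1)})=0$, should force it to factor through something zero. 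The precise mechanism still has to be found: either the composite lands in a Hom space that is zero by exceptionality, or the paper's identification of $M'_\alpha$ with $\bigotimes M_\beta\otimes X$ is chosen so that the matching produces a vanishing Hom. For example, $x_i^{(t_\beta)}\to Sx_j$ may vanish, or one may use $\mathrm{Hom}(x_i^{(t)},x_j^{(t')})=0$ coming from the exceptional pair ordering.

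\textbf{Main obstacle.} The first claim is routine bookkeeping. The main obstacle is the second: tracking which copy is identified with which under the isomorphism $M'_\alpha\simeq X\otimes\bigotimes M_\beta$, and then locating the single factor where the composite of the chain maps is zero. To do this I would first compute the small Example~\ref{ex : exceptional sequences} by hand, and then use Theorem~\ref{thm : exceptional sequences}(3) together with the exceptional-sequence vanishing $\mathrm{Hom}(x_b^{(t)},x_a^{(t)})=0$ for $a<b$.
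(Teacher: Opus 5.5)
Your setup is the paper's: write both $\eta$'s as pure tensors and find one tensor factor whose composite vanishes. But the decisive step of the second claim is missing, and the vanishing you propose points the wrong way.

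You do locate the same factor as the paper. With your labels $i=s(\alpha)$, $j=s(\beta)$, the $\alpha$-chain moves the factor $x_i^{(t_\beta)}$ to $x_i^{(t_\beta+1)}\simeq x_j^{(t_\beta)}$ (recall $t_\beta<t_\alpha$ by Remark~\ref{rk : ineq on t}). Under the identification from the proof of Proposition~\ref{prop : multisets barpi}, this copy is the top of the $\beta$-chain inside $M'_\alpha$.

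The Hom-vanishing you cite, however, is reversed. For $\beta\colon j\to i$, Theorem~\ref{thm : exceptional sequences}(3) gives $\mathrm{Hom}(x_i^{(t_\beta)},x_j^{(t_\beta+1)})=0$. The space you wrote, $\mathrm{Hom}(x_j^{(t_\beta)},x_i^{(t_\beta+1)})$, is $\mathrm{End}(x_j^{(t_\beta)})\neq 0$, so it forces nothing. The paper's proof is exactly that the factor of the composite at this position is
\[
x_i^{(t_\beta)}\longrightarrow x_i^{(t_\beta+1)}\simeq x_j^{(t_\beta)}\longrightarrow x_j^{(t_\beta+1)},
\]
an element of $\mathrm{Hom}(x_i^{(t_\beta)},x_j^{(t_\beta+1)})=0$. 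Hence the pure tensor $\eta_\beta^{M'_\alpha}\circ\eta_\alpha^{M_\alpha}$ is zero.

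Note that the paper lets the $\eta_\beta$-step out of $x_j^{(t_\beta)}$ land in $x_j^{(t_\beta+1)}$. If, as you anticipate, you follow the literal definition of $\eta_\beta$, the top of the $\beta$-chain is sent to $Sx_j=I_j$. Then what you need is $\mathrm{Hom}(x_i^{(t_\beta)},I_j)=0$, i.e.\ $j\notin\mathrm{Supp}\,x_i^{(t_\beta)}$. This is not literally (3), but it can be checked from the inductive construction of the sequences in Section~\ref{sec : exceptional sequences}.

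For the first claim your argument agrees with the paper's, which just cites the leftmost inclusion of Proposition~\ref{prop : multisets barpi}. Your count of projectives via Corollary~\ref{cor : multisets of projectives} usefully supplies the neutrality of $Z$, which the paper leaves implicit.

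Your side remark that $I_i$ is non-projective unless $i$ is isolated is false, however. For $Q=1\to 2$ one has $I_2\simeq P_1$. For $\alpha=2\to\ast$ the leftover factor $Sx_2$ is then projective, so $X$ is not neutral there. The paper's one-line argument does not address this edge case either, so the statement itself needs an extra hypothesis there; but the remark should not be asserted as you did.
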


  \begin{proof}
    The first statement is a straightforward consequence of the leftmost inclusion of Proposition~\ref{prop : multisets barpi}. Concerning the second claim, let us denote by $i := s(\beta)$ and also $t := t_{\beta}$; then when writing the desired composition as a pure tensor, then by Theorem~\ref{thm : exceptional sequences} (3) and (4), one of the factors will be of the form $g_{l, \beta} \circ f_{k, \alpha}$ with 
    $$ x_j^{(t)} \xrightarrow[]{f_{k, \alpha}} x_j^{(t+1)} \simeq  x_i^{(t)} \xrightarrow[]{g_{l, \beta}} x_i^{(t+1)} $$
    which is zero by Theorem~\ref{thm : exceptional sequences} (3).
  \end{proof}

 \section{Exact complexes and standard $q$-characters}
 \label{sec : exact complexes}

 This section contains the most important results of this paper. Relying on the materials from Sections~\ref{sec : Euler characteristics and qcharacters} and~\ref{sec : the category RQ}, we show the existence for each indecomposable object $M$ in $\RQ$ of a chain complex $C_{\bullet}(M)$ satisfying strong homological properties that determine it uniquely up to homotopy. We then prove that (after a mild specialization), the Euler characteristics of the complexes  in the hammock category obtained by applying the functor $\mathcal{D}_Q$  to the $C_{\bullet}(M)$ coincide with the (truncated) $q$-characters of standard modules in the HL category $\Cxi^{(1)}$.

 \subsection{Definitions and classical properties}

 We fix a $\mathbf{k}$-linear category $\mathcal{A}$ together with a $\mathbf{k}$-linear subcategory  $\mathcal{C}$ of $\mathcal{A}$. Let $C_{\bullet}$ be a chain complex in $\mathcal{C}$, assumed to be of the form $$ C_{\bullet} :=  \quad  \cdots \longrightarrow 0 \longrightarrow C_0 \longrightarrow \cdots \longrightarrow C_m  \longrightarrow 0 \longrightarrow \cdots   \quad  $$
 For such a complex $C_{\bullet}$, the integer $m$ will be called the length of $C_{\bullet}$. 
 
 \begin{deftn} \label{def : exact complexes}
 We say that $C_{\bullet}$ is left  (resp. right) $\mathcal{C}$-exact  if $d_n$ is a weak cokernel of $d_{n-1}$ in $\mathcal{C}$ for every $n>0$ (resp. $d_{n-1}$ is a weak kernel of $d_n$ in $\mathcal{C}$ for every $n<m$). We say that $C_{\bullet}$ is $\mathcal{C}$-exact if it is both left and right $\mathcal{C}$-exact. 
 \end{deftn}

 Note in particular that with this definition, if $C_{\bullet}$ is left (resp. right)  $\mathcal{C}$-exact then $d_{m-1}$ (resp. $d_0$) is an epimorphism (resp. a monomorphism).
  The following is  a classical fact (cf. for example \cite{Jasso16}).  

 \begin{lem} \label{lem : exactness implies uniqueness}
    Let $C_{\bullet}$ and $D_{\bullet}$ be left $\mathcal{C}$-exact chain complexes. Assume moreover that $C_0 \simeq D_0$ and $d_0^{C_{\bullet}} = d_0^{D_{\bullet}}$. Then $C_{\bullet}$ and $D_{\bullet}$ are isomorphic up to homotopy. 
 \end{lem}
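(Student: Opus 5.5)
We construct a chain map $u_{\bullet} : C_{\bullet} \to D_{\bullet}$ degree by degree and then show that it is a homotopy equivalence. This is the standard comparison argument for weak cokernels, as in \cite{Jasso16}. Fix an isomorphism $u_0 : C_0 \xrightarrow{\sim} D_0$ compatible with the identification $d_0^{C_{\bullet}} = d_0^{D_{\bullet}}$, i.e.\ such that $d_0^{D}u_0 = d_0^{C}$ after identifying $C_1$ and $D_1$ through the common codomain. (If the hypothesis is read literally as $C_1 = D_1$, we also take $u_1 = \mathrm{id}$.)

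The inductive step runs as follows. Suppose $u_0, \ldots , u_n$ are defined in $\mathcal{C}$ with $d_{k}^{D}u_k = u_{k+1}d_k^{C}$ for all $k<n$. Consider $d_n^{D}u_n : C_n \to D_{n+1}$. We have
$$ d_n^{D}u_n d_{n-1}^{C} = d_n^{D}d_{n-1}^{D}u_{n-1} = 0, $$
and $d_n^{C}$ is a weak cokernel of $d_{n-1}^{C}$ in $\mathcal{C}$ by left $\mathcal{C}$-exactness. Hence $d_n^{D}u_n$ factors as $u_{n+1}d_n^{C}$ for some $u_{n+1} : C_{n+1} \to D_{n+1}$ in $\mathcal{C}$. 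This produces a chain map $u_{\bullet}$. Exchanging the roles of $C_{\bullet}$ and $D_{\bullet}$ gives a chain map $v_{\bullet} : D_{\bullet} \to C_{\bullet}$ with $v_0 = u_0^{-1}$.

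Next we show that $w := v_{\bullet}u_{\bullet} - \mathrm{id}_{C_{\bullet}}$ is null-homotopic, again by induction, using only left exactness of $C_{\bullet}$. Since $w_0 = 0$, we set $h_1 = 0$ and $h_0 = 0$. Suppose $h_k : C_k \to C_{k-1}$ have been built with $w_{k-1} = d_{k-2}h_{k-1} + h_{k}d_{k-1}$. Then
$$ (w_k - d_{k-1}h_k)d_{k-1} = d_{k-1}w_{k-1} - d_{k-1}(w_{k-1} - h_{k-1}\text{-terms}) = d_{k-1}h_{k-1}d_{k-2}\cdot 0 = 0 $$
by the usual computation. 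The weak cokernel property then gives $h_{k+1}$ with $w_k - d_{k-1}h_k = h_{k+1}d_k$. In the last degree $m$ the required factorization is automatic, because the complexes are bounded. The same argument applied to $D_{\bullet}$ shows that $u_{\bullet}v_{\bullet}$ is homotopic to the identity, so the two complexes are isomorphic in $\mathcal{K}^b(\mathcal{C})$.

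The only delicate point is the bookkeeping at the two ends. At degree $0$ we have to use $d_0^{C}=d_0^{D}$ to start with $h_1=0$. At degree $m$ we need the lengths to be handled: if the lengths differ, the construction still goes through, with the extra terms mapping to $0$. All maps constructed lie in $\mathcal{C}$ because weak cokernels are taken in $\mathcal{C}$, and this is exactly why the definition uses $\mathcal{C}$-exactness rather than exactness in $\mathcal{A}$.
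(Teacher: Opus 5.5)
Your argument is correct and is the standard comparison-lemma proof for weak cokernels. The paper does not write out a proof; it only cites this fact as classical (via Jasso), so your approach is the same as the one it relies on.

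Two small repairs are needed:
\begin{itemize}
\item The displayed computation should read $(w_k - d_{k-1}h_k)d_{k-1} = d_{k-1}(w_{k-1} - h_k d_{k-1}) = d_{k-1}d_{k-2}h_{k-1} = 0$.
\item The top degree is not handled by boundedness. It is handled by the case $n=m$ of left $\mathcal{C}$-exactness, i.e.\ $d_{m-1}$ is an epimorphism in $\mathcal{C}$. Your general inductive step already uses this at $k=m$ (with $h_{m+1}=0$). It is genuinely needed, both to get $w_m = d_{m-1}h_m$ and to build the chain maps when the two lengths differ.
\end{itemize}
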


 \begin{deftn} \label{def : almost split complexes}
     We say that $C_{\bullet}$ is  $\mathcal{C}$-left almost split (resp.  $\mathcal{C}$-right almost split) if $C_{\bullet}$ is  $\mathcal{C}$-left (resp. right)  exact and moreover $C_{\bullet}$ does not split,  $C_0$ (resp. $C_m$) is indecomposable in $\mathcal{C}$ and  any morphism in $\mathcal{C}$ of domain $C_0$  (resp. of codomain $C_m$) that is not a split mono (resp. split epi) factors through $d_0$ (resp. $d_{m-1}$) in $\mathcal{C}$. We say that $C_{\bullet}$ is $\mathcal{C}$-almost split if it is both left and right $\mathcal{C}$-almost split. 
 \end{deftn}

 The following is a direct consequence of the above definitions.
 
 \begin{lem} \label{lem : morphism}
     Assume $C_{\bullet}$ is  $\mathcal{C}$-left exact and $D_{\bullet}$ is an arbitrary  chain complex in $\mathcal{C}$. Assume also that there are  non trivial morphisms $u : C_0 \rightarrow D_0$ and $u_1 : C_1 \rightarrow D_1$ in $\mathcal{C}$ such that $d_0^{D_{\bullet}} u_0 = u_1 d_0^{C_{\bullet}}$.  Then there is a non trivial morphism of chain complexes $u_{\bullet} : C_{\bullet} \rightarrow D_{\bullet}$ with $u_0 = u$.  
 \end{lem}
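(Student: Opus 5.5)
We construct the chain map degree by degree, by induction on $n \geq 1$, starting from $u_0 := u$ and the given $u_1$. The commutation $d_0^{D_{\bullet}} u_0 = u_1 d_0^{C_{\bullet}}$ is exactly the first square of a chain map, so what remains is to produce $u_{n+1} : C_{n+1} \rightarrow D_{n+1}$ with $d_n^{D_{\bullet}} u_n = u_{n+1} d_n^{C_{\bullet}}$, assuming $u_0, \ldots , u_n$ have been built with all previous squares commuting.

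For the inductive step we use the left $\mathcal{C}$-exactness of $C_{\bullet}$ (Definition~\ref{def : exact complexes}): for each $n>0$, $d_n^{C_{\bullet}}$ is a weak cokernel of $d_{n-1}^{C_{\bullet}}$ in $\mathcal{C}$. Consider the morphism $d_n^{D_{\bullet}} u_n : C_n \rightarrow D_{n+1}$, which is a morphism in $\mathcal{C}$ since $\mathcal{C}$ is a subcategory and the $u_k$ and differentials lie in $\mathcal{C}$. Precomposing with $d_{n-1}^{C_{\bullet}}$ and using the inductive hypothesis gives
$$ d_n^{D_{\bullet}} u_n d_{n-1}^{C_{\bullet}} = d_n^{D_{\bullet}} d_{n-1}^{D_{\bullet}} u_{n-1} = 0 , $$
since $D_{\bullet}$ is a complex. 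By the weak cokernel property, $d_n^{D_{\bullet}} u_n$ factors through $d_n^{C_{\bullet}}$ in $\mathcal{C}$, i.e. there is $u_{n+1} \in \mathrm{Hom}_{\mathcal{C}}(C_{n+1}, D_{n+1})$ with $u_{n+1} d_n^{C_{\bullet}} = d_n^{D_{\bullet}} u_n$. Since $C_{\bullet}$ is bounded (of length $m$), the process stops after finitely many steps, setting $u_k := 0$ for $k>m$ and $k<0$; the squares outside $[0,m]$ commute trivially because $C_k = 0$ there.

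The resulting $u_{\bullet}$ is a morphism of chain complexes with $u_0 = u$, and it is non-trivial because $u_0 = u \neq 0$ (so in particular $u_{\bullet}$ is not the zero morphism of complexes). There is no real obstacle here; the only point requiring care is the base of the induction, where the weak cokernel property is not needed because $u_1$ is given, and checking that each lift is taken inside $\mathcal{C}$ rather than in the ambient category $\mathcal{A}$, which is exactly what the definition of $\mathcal{C}$-exactness provides.
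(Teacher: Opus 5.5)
Your proposal is correct and follows the paper's proof essentially verbatim. In both, you inductively factor $d_n^{D_{\bullet}} u_n$, which kills $d_{n-1}^{C_{\bullet}}$, through the weak cokernel $d_n^{C_{\bullet}}$ in $\mathcal{C}$, and non-triviality is inherited from $u_0 = u$. One small care point: the square with source in degree $m$ does not commute for free. It is covered by your inductive step at $n=m$, equivalently by $d_{m-1}^{C_{\bullet}}$ being an epimorphism in $\mathcal{C}$, and your induction over all $n>0$ already includes that step.
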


  \begin{proof}
     The composition $v_1 :=  d_1^{D_{\bullet}} u_1 $ is a morphism in $\mathcal{C}$ of domain $C_0$, and 
     $$v_1 d_0^{C_{\bullet}} = d_1^{D_{\bullet}} u_1 d_0^{C_{\bullet}} =  d_1^{D_{\bullet}}  d_0^{D_{\bullet}}u_0 = 0 .  $$
     Hence by left $\mathcal{C}$-exactness of $C_{\bullet}$, $v_1$ factors through $d_1^{C_{\bullet}}$ which gives a morphism $u_2 : C_2 \rightarrow D_2$ making the obvious square commute. Continuing inductively, we obtain the desired morphism $u_{\bullet} : C_{\bullet} \rightarrow D_{\bullet}$, and $u_{\bullet}$ is non trivial as $u_0$ and $u_1$ are. 
  \end{proof}



  \begin{lem} \label{lem : mapping cone}
      Let $C_{\bullet}$ and $D_{\bullet}$ be two complexes such that $C_{\bullet}$ is left $\mathcal{C}$-left exact and $D_{\bullet}$ is $\mathcal{C}$-left almost split. Assume  $u_{\bullet} : C_{\bullet} \longrightarrow D_{\bullet}$ is a morphism of chain complexes  such that $u_0$ does not factor through $d_0^{C_{\bullet}}$. Then the mapping cone of $u_{\bullet}[-1] $ is $\mathcal{C}$-left exact. 
  \end{lem}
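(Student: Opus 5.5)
Set $E_\bullet := \mathrm{Cone}(u_\bullet[-1])$. With the conventions of the paper, its degree $n$ object is $C_n \oplus D_{n-1}$, and its differential is the triangular matrix built from $d^{C_\bullet}$, $u$ and $-d^{D_\bullet}$, up to the sign and shift conventions. In degree $0$ this gives $E_0 = C_0$ (as $D_{-1}=0$), and $E_1 = C_1 \oplus D_0$ with $d_0^{E} = (d_0^{C}, u_0)^T$, up to sign. I will prove directly that $d_n^{E}$ is a weak cokernel of $d_{n-1}^{E}$ in $\mathcal{C}$ for every $n>0$. Fix such an $n$ and a morphism $\varphi = (\varphi_C, \varphi_D) : C_n \oplus D_{n-1} \to W$ in $\mathcal{C}$ with $\varphi \circ d_{n-1}^{E} = 0$. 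Writing this out componentwise yields two equations:
\begin{itemize}
\item[(a)] $\varphi_C d_{n-1}^{C} \pm \varphi_D u_{n-1} = 0$ on $C_{n-1}$;
\item[(b)] $\varphi_D d_{n-2}^{D} = 0$ on $D_{n-2}$.
\end{itemize}

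First I treat the $D$-component. For $n \geq 2$, equation (b) and the left $\mathcal{C}$-exactness of $D_\bullet$ give a factorization $\varphi_D = \psi_D d_{n-1}^{D}$ with $\psi_D : D_n \to W$. For $n=1$, (b) is vacuous and $\varphi_D$ is just a morphism of domain $D_0$. This is where the almost split hypothesis on $D_\bullet$ is used. If $\varphi_D$ is not a split monomorphism, it factors through $d_0^{D}$ and we get $\psi_D$ in the same way. If $\varphi_D$ is a split monomorphism, with retraction $r$, then (a) gives $u_0 = \mp r\varphi_C d_0^{C}$, so $u_0$ factors through $d_0^{C}$. This contradicts the hypothesis, so this case cannot occur. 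Since $D_0$ is indecomposable, "split mono" is the correct dichotomy here.

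Next I treat the $C$-component. Replace $\varphi$ by $\varphi' := \varphi - \psi_D \circ d_n^{E}$, restricted appropriately. The $D$-component of $\varphi'$ then becomes zero, and by construction $\varphi'$ still kills $d_{n-1}^{E}$. The row of $d_n^{E}$ mapping into $D_n$ is $(u_n, -d_{n-1}^{D})$ up to sign, so subtracting $\psi_D d_n^{E}$ changes the $C$-component to $\varphi_C \mp \psi_D u_n$. Equation (a), together with $d^{D} u = u d^{C}$, then shows that $\varphi'_C$ kills $d_{n-1}^{C}$. Left $\mathcal{C}$-exactness of $C_\bullet$ gives a factorization $\varphi'_C = \psi_C d_n^{C}$. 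Therefore $\varphi = (\psi_C, \psi_D) \circ d_n^{E}$, again up to the signs fixed by the convention. This is exactly the weak cokernel property.

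The main obstacle is the $n=1$ step: it is the only place where the almost split property is needed, and the hypothesis on $u_0$ is precisely what excludes the split mono case. Beyond that, the one thing to check carefully is that the sign conventions in $d^{\mathrm{Cone}}$ and in the shift $[-1]$ make the cancellations in (a) work. All constructed maps are composites of morphisms in $\mathcal{C}$, so they stay in $\mathcal{C}$ as required.
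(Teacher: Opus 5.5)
Your proposal is correct and is essentially the paper's proof. You split $\varphi$ into its $C_n$- and $D_{n-1}$-components and factor the $D$-component through $d_{n-1}^{D_\bullet}$: by left exactness for $n\geq 2$, and for $n=1$ by left almost splitness after ruling out the split mono case via the hypothesis on $u_0$. You then apply left exactness of $C_\bullet$ to the corrected $C$-component, and with the paper's convention $d_n^{E_\bullet}=\begin{pmatrix} d_n^{C_\bullet} & 0\\ u_n & -d_{n-1}^{D_\bullet}\end{pmatrix}$ the correction is $\varphi+(0\ \ \psi_D)\circ d_n^{E_\bullet}$, giving $\varphi=(\psi_C\ \ -\psi_D)\circ d_n^{E_\bullet}$, exactly the paper's factorization through $(f'_1\ \ -f'_2)$.
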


   \begin{proof}
      Let $E_{\bullet}  := \mathrm{Cone}(u_{\bullet}[-1])$, let us fix $n>0$ and assume $f : E_n \rightarrow X$ is a morphism whose composition with $d_{n-1}^{E_{\bullet}}$ vanishes. We have  $E_n \simeq A_n \oplus B_{n-1} $  so writing $f$ as $(f_1 \enspace f_2)$ the condition $f d_{n-1}^{E_{\bullet}} = 0$ is equivalent to 
      $$   f_2 d_{n-2}^{D_{\bullet}} = 0 \qquad \text{and} \qquad f_1 d_{n-1}^{C_{\bullet}} + f_2 u_{n-1} = 0 .  $$
      If $n>1$, then by $\mathcal{C}$-left exactness of $D_{\bullet}$, the first condition implies that $f_2 = f'_2 d_{n-1}^{D_{\bullet}}$  so plugging into the second condition we get 
      $$  0 = f_1 d_{n-1}^{C_{\bullet}} + f'_2 d_{n-1}^{D_{\bullet}} u_{n-1} = f_1 d_{n-1}^{C_{\bullet}} + f'_2  u_n d_{n-1}^{C_{\bullet}}  = (f_1 + f'_2u_n) d_{n-1}^{C_{\bullet}} .  $$
      Thus by $\mathcal{C}$-left exactness of $C_{\bullet}$, we can write $f_1 + f'_2u_n = f'_1 d_n^{C_{\bullet}}$. All together we have 
      $$ (f_1 \enspace f_2) = (f'_1 \enspace -f'_2) \begin{pmatrix}
          d_n^{C_{\bullet}} & 0 \\ u_n & -d_{n-1}^{D_{\bullet}} 
      \end{pmatrix} 
      $$
      which shows that $f$ factors trough $d_n^{E_{\bullet}}$. Assume now $n=1$, so that the first condition becomes empty as $d_{-1}^{D_{\bullet}} = 0$. If $f_2$ was a split  monomorphism, then by definition $f_2$ would have a left inverse $h$, so that  the second condition could be  rewritten as $u_0 = -hf_1d_0^{C_{\bullet}}$  which contradicts the assumption that $u_0$ does not factor through $d_0^{C_{\bullet}}$. Therefore as $D_{\bullet}$ is  $\mathcal{C}$-left almost split, we can write $f_2 = f'_2 f_0^{D_{\bullet}}$. The conclusion then follows in an identical way as above.
   \end{proof} 

  We now assume that $\mathcal{A}$ is endowed with a symmetric monoidal structure $\otimes$, and that $\mathcal{C}$ is a monoidal subcategory of $\mathcal{A}$. If $M$ is an object in $\mathcal{C}$, we will often identify $M$ with the chain complex $ \cdots \rightarrow 0 \rightarrow M \rightarrow 0 \rightarrow \cdots$ where $M$ lies in homological degree zero. 

   \begin{deftn} \label{deftn : monoidal exactness}
      We say that $C_{\bullet}$ is purely $\mathcal{C}$-left (resp. right) exact  if $M \otimes C_{\bullet}$ is $\mathcal{C}$-left (resp. right) exact for any object $M$ in $\mathcal{C}$.  We say that $C_{\bullet}$ is purely $\mathcal{C}$-exact  if $ C_{\bullet}$ is both purely  $\mathcal{C}$-left and right exact. 
   \end{deftn}

  \subsection{Purely exact complexes in $\RQ$}

For each  indecomposable object $M \simeq  X \otimes M_{\alpha_1} \otimes \cdots \otimes M_{\alpha_r}$ in $\RQ$, with $r \geq 1$ and $\alpha_1, \ldots , \alpha_r \in \overline{Q}_1$ and $X$ is a neutral object, we define a dimension vector $\boldsymbol{d}(M)$ as well as a  positive integer $d(M)$ as 
$$ \boldsymbol{d}(M) := \sum_{l=1}^r \boldsymbol{\dim}  I_{s(\alpha_l)} \qquad d(M) := |\boldsymbol{d}(M)| . $$
Recall also that  if $\gamma := \sum_i a_i \alpha_i$ with $a_i \geq 0$ for each $i \in I$, then $Z_{\gamma}$ denotes the multiset consisting of $a_i$ copies of $x_i$ for every $i \in I$, i.e. $Z_{\gamma} := \bigsqcup_i \{x_i\}^{\sqcup a_i} $ .
 Let $M$ be a dominant object in $\RQ$. Write  $M := z_1 \otimes \cdots \otimes z_l$. Then Corollary~\ref{cor : multisets of projectives} implies that as multisets we have $Z_{\boldsymbol{d}(M)} \subseteq \{z_1, \ldots , z_l\}$. 
Consequently $\mu_{Z_{\boldsymbol{d}(M)}}M$ is an (indecomposable) object in $\RQ$. 
We can now state the main result of this section. 

  \begin{thm} \label{thm : main thm for standards}
     For every indecomposable object $M$ in $\RQ$, there exists a (unique up to homotopy) purely $\RQ$-exact almost split complex $C_{\bullet}(M)$ such that $C_0 \simeq M$.
  \end{thm}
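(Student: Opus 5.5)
Uniqueness up to homotopy is the easy half: it follows from Lemma~\ref{lem : exactness implies uniqueness}, provided we show that $d_0$ is forced up to isomorphism. The left almost split property says every non-split-mono morphism out of $C_0$ factors through $d_0$. By construction, the only non-invertible morphisms in $\RQ$ out of an indecomposable object are tensor products of the $\eta_\alpha$ with identities and braidings. Hence $d_0$ must be (up to automorphisms of $C_1$) the column of all morphisms $\mathrm{id}\otimes\eta_{\alpha_l}$, one for each dominant factor $M_{\alpha_l}$ of $M$.

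For existence I will argue by induction on $d(M)$, the number of dominant factors counted through $\boldsymbol{d}(M)$. Neutral objects admit no non-invertible morphisms out of them, so $C_\bullet(M)=M$ in degree $0$ works trivially. For $M\simeq X\otimes M_{\alpha}$ with a single dominant factor, I will build $C_\bullet(M)$ as a Koszul-type complex. By Lemma~\ref{lem : morphism eta}, the codomain of $\eta_\alpha$ is $X'\otimes\bigotimes_{\beta\in Q_1\cap\mathrm{in}(s(\alpha))}M_\beta$, and each $M_\beta$ has strictly smaller $d$, since $\boldsymbol{\dim}I_{s(\beta)}<\boldsymbol{\dim}I_{s(\alpha)}$. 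The induction hypothesis therefore supplies complexes $C_\bullet(M'_\alpha)$. I will set $C_\bullet(M):=\mathrm{Cone}(u_\bullet[-1])$, where $u_\bullet: M\to C_\bullet(M'_\alpha)$ is the chain map induced by $\eta_\alpha$. The relation $\eta_\beta\circ\eta_\alpha=0$ from Lemma~\ref{lem : morphism eta} makes this a chain map, and Lemma~\ref{lem : morphism} extends it to all degrees. For a general $M=X\otimes M_{\alpha_1}\otimes\cdots\otimes M_{\alpha_r}$ I take the tensor product $X\otimes C_\bullet(M_{\alpha_1})\otimes\cdots\otimes C_\bullet(M_{\alpha_r})$. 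Equivalently, I iterate cones, peeling off one factor at a time and using Lemma~\ref{lem : mapping cone} to keep left exactness at every step.

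Then I check the properties. Pure left exactness follows from Lemma~\ref{lem : mapping cone} applied after tensoring with an arbitrary object $N$. The hypothesis that $u_0$ does not factor through $d_0$ holds because the factor $x_{s(\alpha)}$ is replaced by $Sx_{s(\alpha)}$, so the relevant maps land in different objects. Right exactness is obtained by the dual argument, applying the dual of Lemma~\ref{lem : mapping cone} from the top degree. There $C_m$ is the fully tilted object $\mu_{Z_{\boldsymbol d(M)}}M$, which is neutral, so its indecomposability, and hence the right almost split property, is clear. Non-splitness follows because $d_0$ is a non-trivial non-split morphism into a complex whose terms are all non-isomorphic to $M$. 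For the tensor product construction, exactness of the product comes from purity of each factor together with a standard spectral/filtration argument on the total complex. This is exactly where \emph{pure} exactness, rather than plain exactness, is needed.

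The main obstacle is verifying weak cokernel and weak kernel properties in $\RQ$. $\RQ$ is not full in $\tRQ$, and its morphisms are generated by the $\eta_\alpha$ acting on specific tensor positions. So one has to show that every morphism killing $d_{n-1}$ is a combination of $\eta$'s already appearing in $d_n$. I would first attempt this combinatorially: I would describe every morphism space in $\RQ$ as spanned by words in the $\eta_\beta$ indexed by multisets of arrows, and then use Corollary~\ref{cor : unique decomposition} to identify domains and codomains uniquely. This reduces exactness to that of a Koszul complex on the commuting generators $\eta_\beta$, subject to the relations of Lemma~\ref{lem : morphism eta}.
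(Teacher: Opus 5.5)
\textbf{There is a genuine gap, and it is the step you yourself call the main obstacle.}

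Your construction is the paper's, packaged differently. The paper obtains a chain map $M_{\alpha_1}\otimes C_\bullet(M')\to C_\bullet(M'_{\alpha_1})\otimes C_\bullet(M')=X\otimes C_\bullet(N)$ from Lemma~\ref{lem : morphism}, with $\boldsymbol{d}(N)=\boldsymbol{d}(M)-\alpha_{s(\alpha_1)}$. Since $C_\bullet(M_{\alpha_1})\otimes C_\bullet(M')\cong\mathrm{Cone}((\eta_{\alpha_1}\otimes\mathrm{id})[-1])$ for that map, your tensor product is exactly the paper's iterated cone, with the canonical choice $u_n=\eta_{\alpha_1}\otimes\mathrm{id}_{C_n(M')}$.

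Your claim that Lemma~\ref{lem : mapping cone} gives pure exactness ``after tensoring with an arbitrary object'' is not justified; the paper invokes the lemma in exactly the same way. The degree-one step of that lemma needs the target $T\otimes X\otimes C_\bullet(N)$ to be left almost split. This fails as soon as $T$ has a dominant factor $M_\gamma$:
\begin{itemize}
\item $\eta_\gamma\otimes\mathrm{id}$ is not a split mono, and it is the identity on every tensor factor of $C_0(N)$;
\item anything factoring through $\mathrm{id}\otimes d_0$ passes, on some factor $z$ of $C_0(N)$, through a non-isomorphism $z\to z'$ of exceptional modules, and cannot return to $\mathrm{id}_z$ since $\mathrm{End}(z)=\mathbf{k}$.
\end{itemize}
What you actually need is a relative statement. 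Put $A:=M_{\alpha_1}\otimes C_\bullet(M')$. For every object $T$, every morphism $f_2$ out of $T\otimes X\otimes C_0(N)$ such that $f_2\circ(\mathrm{id}_T\otimes u_0)$ factors through $\mathrm{id}_T\otimes d_0^{A}$ must factor through $\mathrm{id}_T\otimes d_0^{X\otimes C_\bullet(N)}$. Granting this, the proof of Lemma~\ref{lem : mapping cone} goes through verbatim. Proving it requires an explicit description of the morphisms of $\RQ$ out of such tensor products; Corollary~\ref{cor : unique decomposition} is what controls which $\eta$'s can act. A Koszul complex on commuting generators is not the right model here. The $\eta_\beta$ with $\beta$ entering $s(\alpha)$ only act on the codomain of $\eta_\alpha$, which is why the complex in Figure~\ref{fig : braid diagram} has ranks $1,1,2,1$ rather than binomial ones.

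\textbf{Two of your auxiliary arguments are also wrong.}

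First, purity of the factors does not by itself make a tensor product of complexes exact. Filter the double complex $\mathrm{Hom}_{\RQ}(C_p\otimes D_q,Y)$ and use purity of $C_\bullet$. This reduces exactness of the total complex to exactness, in $q>0$, of $\mathrm{coker}(\mathrm{Hom}_{\RQ}(C_1\otimes D_q,Y)\to\mathrm{Hom}_{\RQ}(C_0\otimes D_q,Y))$. That does not follow from purity of $D_\bullet$, since a cokernel of a map between exact complexes need not be exact. Identifying this cokernel needs the same relative information as above.

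Second, right almost splitness concerns morphisms \emph{into} $C_m$. Neutrality of $C_m$ only excludes non-invertible morphisms \emph{out of} it, so it is beside the point. Besides, $C_m$ need not be neutral, because $Sx_i=I_i$ can be projective (e.g.\ $I_2\simeq P_1$ for $Q=1\to 2$). The same confusion affects your base case. When $s(\alpha)$ is a source, the codomain of $\eta_\alpha$ is neutral by Lemma~\ref{lem : morphism eta}. Then $\eta_\alpha$ is a morphism into it that is not a split epi and does not factor through $0$, so this object placed in degree $0$ is not right almost split.

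The paper only asserts that the right-hand side is dual. Running the dual of Lemma~\ref{lem : mapping cone} on the same cone would need $M_{\alpha_1}\otimes C_\bullet(M')$ to be right almost split. That is the mirror image of the relative statement missing on the left.
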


The proof will be carried out by induction on $d(M)$. 
Let $M$ be an indecomposable object in $\RQ$ and assume the statement of Theorem~\ref{thm : main thm for standards} has been proved for all objects whose corresponding dimension vector is strictly less than that of $M$ for the usual order on the lattice $\bigoplus_i \mathbb{Z}\alpha_i$. Let us fix an isomorphism $M \simeq M_{\alpha_1} \otimes \cdots \otimes M_{\alpha_r}$ and consider $M' := M_{\alpha_2} \otimes \cdots \otimes M_{\alpha_r}$. Then obviously $d(M')< d(M)$ so the induction assumption provides us with a purely $\RQ$-exact almost split complex $C_{\bullet} := C_{\bullet}(M')$. In particular, $M_{\alpha_1} \otimes C_{\bullet}$ is $\RQ$-exact. On the other hand, by Lemma~\ref{lem : morphism eta} there is a non-trivial morphism 
$$\eta_{\alpha_1}^{M_{\alpha_1}} : M_{\alpha_1} \longrightarrow  X \otimes \bigotimes_{ \beta \in Q_1 \cap \mathrm{in}_Q(i)} M_{\beta}  \qquad \qquad  \text{where $i := s(\alpha_1)$}. $$
Let  us  consider the dominant object  $N := \left(\bigotimes_{ \beta \in Q_1 \cap \mathrm{in}_Q(i)} M_{\beta} \right) \otimes M_{\alpha_2} \otimes \cdots \otimes M_{\alpha_r} $. We have 
\begin{align*}
    d(N) &= \sum_{j \rightarrow i} \mathrm{dim tot}I_j + \sum_{l=2}^r \mathrm{dim tot}I_{s(\alpha_l)} \\
    &= \mathrm{\dim tot}I_i - \alpha_i + \sum_{l=2}^r \mathrm{dim tot}I_{s(\alpha_l)} = d(M)-\alpha_i  .
 \end{align*}
 Thus  by the induction assumption the complex $X \otimes D_{\bullet}$ is $\RQ$-exact and actually $\RQ$-almost split because there is no non isomorphism of domain $X$ in $\RQ$.  Let $u_0 := \eta \otimes \mathrm{id}_{M_{\alpha_2}} \otimes \cdots \otimes \mathrm{id}_{M_{\alpha_r}}$ and let us construct a morphism $u_1$ making the first square commute. By $\RQ$-left almost splitness of $D_{\bullet}$ and $C_{\bullet}$, the differentials $d_0^{M_{\alpha_1}} \otimes C_{\bullet}$ and  $d_0^{D_{\bullet}}$  can  respectively  be written as column morphisms $^t(g_1 \enspace \ldots  \enspace g_{r-1})$ and  $^t (f_1  \enspace \ldots \enspace  f_s  \enspace g'_1 \enspace  \ldots \enspace g'_{r-1})$ where 
 $$ g_l := \mathrm{id}_{M_{\alpha_1}} \otimes \mathrm{id}_{M_{\alpha_2}} \cdots \otimes \eta_{\alpha_{l+1}} \otimes \cdots \otimes \mathrm{id}_{M_{\alpha_r}},  $$
$$ f_k := \mathrm{id}_X  \otimes \cdots \otimes \eta_{\beta_k} \otimes  \cdots  \otimes \mathrm{id}_{M_{\alpha_2}} \otimes \cdots \otimes \mathrm{id}_{M_{\alpha_r}},  $$
$$ g'_l = \mathrm{id}_X \otimes \mathrm{id}_{M_{\beta_1}}  \otimes \cdots  \otimes \mathrm{id}_{M_{\beta_s}}  \otimes \mathrm{id}_{M_{\alpha_2}} \otimes \cdots \otimes \eta_{\alpha_{l+1}} \otimes \cdots \otimes \mathrm{id}_{M_{\alpha_r}}  $$
where the $\cdots $ are tensor products of identity morphisms, and the arrows $\beta_1, \ldots , \beta_s$ denote the arrows incoming into $i$ in $Q$. By Lemma~\ref{lem : morphism eta} we have 
 $$ \forall 1 \leq k \leq s, \enspace f_k \circ u_0 = \left(   \left( \cdots \otimes \eta_{\beta_k} \otimes \cdots \right) \circ \eta \right) \otimes \mathrm{id}_{M_{\alpha_2}} \otimes \cdots \otimes \mathrm{id}_{M_{\alpha_r}} = 0  . $$
 Hence the composition $d_0^{D_{\bullet}} \circ  u_0$ takes the form $^t (0 \enspace \ldots \enspace 0 \enspace h_1  \enspace \ldots  \enspace h_{r-1})$ where for each $1 \leq l \leq r-1$,  we have 
 \begin{align*}
 h_l &=  \left( \cdots \otimes \mathrm{id}_{M_{\alpha_2}} \otimes \cdots  \otimes \eta_{\alpha_l} \otimes \cdots  \otimes \mathrm{id}_{M_{\alpha_r}} \right) \circ  \left( \eta \otimes \mathrm{id}_{M_{\alpha_2}} \otimes \cdots \otimes \mathrm{id}_{M_{\alpha_r}} \right)  \\
 &= h'_l \circ \left( \cdots \otimes \mathrm{id}_{M_{\alpha_2}} \otimes \cdots  \otimes \eta_{\alpha_l} \otimes \cdots  \otimes \mathrm{id}_{M_{\alpha_r}} \right)
  \end{align*}
   (where $h'_l \neq 0$) by bifunctoriality of $\otimes$. 
 Thus we have constructed a morphism $u_1 : M_{\alpha_1} \otimes C_1 \longrightarrow D_1$ making the obvious square commute. Hence, as $M_{\alpha_1} \otimes C_{\bullet}$ is $\RQ$-left exact, Lemma~\ref{lem : morphism} yields a non trivial morphism $u_{\bullet} : M_{\alpha_1} \otimes C_{\bullet} \longrightarrow D_{\bullet}$. Moreover, it is clear from the definition of $\eta$ that for any object $T$ in $\RQ$,  $ \mathrm{id}_T \otimes u_0$ does not factor through $ \mathrm{id}_{T} \otimes \mathrm{id}_{M_{\alpha_1}} \otimes d_0^{C_{\bullet}} = \mathrm{id}_T \otimes d_0^{M_{\alpha_1} \otimes C_{\bullet}}$. Hence by Lemma~\ref{lem : mapping cone},  the complex $T \otimes \mathrm{Cone}(u_{\bullet}[-1])$ is $\RQ$-left exact i.e. $ E_{\bullet} := \mathrm{Cone}(u_{\bullet}[-1])$ is $\RQ$-purely left exact.  Moreover  we have $E_n \simeq (M_{\alpha_1} \otimes C_n )\oplus D_{n-1}$ so that $ E_n \simeq  0$ if $n<0$ and $E_0 \simeq M_{\alpha_1} \otimes C_0 \simeq M_{\alpha_1} \otimes M' \simeq M$. Thus to conclude it only remains to check that $E_{\bullet}$ is $\RQ$-left almost split. But this follows from the fact that by the mapping cone construction,
 $$ d_0^{E_{\bullet}} =  ^t ( \mathrm{id}_{M_{\alpha_1}} \otimes d_0^{C_{\bullet}} \quad u_0) = ^t(g_1 \enspace  g_2 \enspace \ldots  \enspace  g_r) $$
 where $ g_1 = \eta_{M_{\alpha_1}}   \otimes \mathrm{id}_{M_{\alpha_2}} \otimes \cdots \otimes  \mathrm{id}_{M_{\alpha_r}} $, so that any non split mono in $\RQ$  having  domain $M$ necessarily factors through $d_0^{E_{\bullet}}$. This shows the existence of the desired complex $C_{\bullet}(M)$, while the uniqueness up to homotopy follows from Lemma~\ref{lem : exactness implies uniqueness}. Note that all the arguments above were done with the notions of left exactness, left almost splitness etc but can be carried out in the same way with the dual notions replacing left by right.   This concludes the proof of Theorem~\ref{thm : main thm for standards}.

  \subsection{Standard $q$-characters as Euler characteristics}

  We conclude this section by  constructing chain complexes in the hammock category $\HQ$ from those obtained in $\RQ$, and computing their Euler characteristics. This is done by applying a functor $\mathcal{D}_Q$ that we now define, and whose existence is essentially a consequence of Proposition~\ref{prop : multisets barpi}. Let us consider $\RQ'$ the smallest full monoidal subcategory of $\RQ$ containing all the objects (isomorphic to objects) of the chain complexes $C_{\bullet}(M_{\alpha_1} \otimes \cdots \otimes M_{\alpha_l})$ for all collections $\alpha_1, \ldots , \alpha_r$ in $\overline{Q}_1$.

   \begin{prop} \label{prop : functor FQ}
     There is an additive monoidal functor $\mathcal{D}_Q : \RQ' \longrightarrow \HQ$ satisfying the following:
     \begin{enumerate} 
         \item For each arrow $\alpha \in \overline{Q}_1$, we have  $\mathcal{D}_Q(M_{\alpha}) = Y(x_{s(\alpha)})$. 
         \item  The functor $\mathcal{D}_Q$ commutes with Serre tiltings, i.e. If $M := X \otimes M_{\alpha_1} \otimes \cdots M_{\alpha_r}$ is an (indecomposable) object in $\RQ'$, then for each $1 \leq k \leq r$, the image under $\mathcal{D}_Q$ of the codomain of $\eta_{\alpha_k}^M$ is the Serre tilting $\mu_{s(\alpha_k)} \mathcal{D}_Q(M)$ in $\HQ$. 
     \end{enumerate} 
   \end{prop}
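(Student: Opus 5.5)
The functor will be defined on indecomposable objects by sending each tensor factor to the corresponding element of a hammock-type multiset, and on morphisms by reading off the pure tensor of $\mathbf{k}Q$-morphisms as a morphism in $\MQ$ via the bijection-indexed description \eqref{eq : def of morphisms in CQ}.

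\textbf{Objects.} Every indecomposable object of $\RQ'$ is (up to braiding) of the form $M = z_1 \otimes \cdots \otimes z_l$ with $z_k$ indecomposable in $\mathrm{mod}\,\mathbf{k}Q \subset \DbRepQ$. By Corollary~\ref{cor : unique decomposition}, the multiset $\{z_1,\dots,z_l\}$ determines uniquely a decomposition $Z \sqcup \barpi_{\alpha_1}\sqcup\cdots\sqcup\barpi_{\alpha_r}$ with $Z$ free of projectives. For dominant $M$ this is literally $M \simeq M_{\alpha_1}\otimes\cdots\otimes M_{\alpha_r}$, and we set
$$\mathcal{D}_Q(M) := Y(x_{s(\alpha_1)})\otimes\cdots\otimes Y(x_{s(\alpha_r)}),$$
whose multiset is $\bigsqcup_k H(x_{s(\alpha_k)})$. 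By the rightmost inclusion of Proposition~\ref{prop : multisets barpi}, this contains $\{z_1,\dots,z_l\}$. So we may regard $M$ as recording the ``visible'' sub-multiset of $\mathcal{D}_Q(M)$; the complementary multiset $R(M) := \bigsqcup_k H(x_{s(\alpha_k)}) \setminus \bigsqcup_k \barpi_{\alpha_k}$ is carried along unchanged. This gives (1).

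For the other objects of $\RQ'$, namely those of the complexes $C_\bullet(M)$, we proceed by the same induction on $d(M)$ as in the proof of Theorem~\ref{thm : main thm for standards}. Each such object arises from a dominant one by successive applications of codomains of $\eta_\alpha$, i.e.\ replacing $x_{i,1}\otimes\cdots\otimes x_{i,m}$ by $x_{i,2}\otimes\cdots\otimes Sx_i$. We define $\mathcal{D}_Q$ of it to be the corresponding Serre tilting $\mu_{i}$ of the image, keeping the hidden part $R$ fixed. Well-definedness, i.e.\ independence of the chosen sequence of tiltings, reduces to Lemma~\ref{lem : morphism eta}, which identifies the codomain $M'_\alpha$ with $X\otimes\bigotimes_{\beta\in \mathrm{in}(i)} M_\beta$. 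Combining this with Proposition~\ref{prop : mutation for objects in CQ} and Lemma~\ref{lem : mutation for functions}, we check that $\mu_i Y(x_i)$, with the appropriate hidden part, equals the image of $X\otimes\bigotimes_\beta M_\beta$ computed via the dominant rule, up to the neutral factor $X$ whose image is the remaining part of the multiset. Making this consistent (assigning a quasi-additive function to a neutral factor) is where I expect the main difficulty to lie. I would handle it by defining $\mathcal{D}_Q$ on the whole $\RQ'$ through the pair (visible multiset, recorded tilting history), rather than through the object alone, and then verifying via Corollary~\ref{cor : unique decomposition} that the resulting pair depends only on the isomorphism class.

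\textbf{Morphisms.} Morphisms in $\RQ'$ are generated by braidings and the $\eta_\alpha^{M}$ tensored with identities. A braiding goes to the corresponding permutation $\sigma$ with empty support. The morphism $\eta_\alpha^M$ is a pure tensor $f_{1,\alpha}\otimes\cdots\otimes f_{m,\alpha}\otimes\mathrm{id}$; it goes to the summand of \eqref{eq : def of morphisms in CQ} indexed by the cyclic shift $\sigma$ sending $x_{i,k}\mapsto x_{i,k+1}$ and $x_{i,m}\mapsto Sx_i$, with identity on $R$. The codomain of this image is exactly $\mu_{s(\alpha)}\mathcal{D}_Q(M)$, which gives (2). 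Composition in $\MQ$ is composition of braid diagrams, matching composition of pure tensors in $\tRQ$, so functoriality holds on generators. Relations in $\RQ'$ are those holding in $\tRQ$; since the assignment is essentially ``forget that the factors are tensored and remember the bijection,'' it is compatible with them. Monoidality follows because both $\otimes$ in $\tRQ$ and \eqref{eq : monoidal structure on CQ} are disjoint union of factors, and because hammock functions add.
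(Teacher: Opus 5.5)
The generator-level part of your construction matches the paper's proof. That proof observes that $\barpi_\alpha\subseteq H(x_i)$ (Proposition~\ref{prop : multisets barpi}), so $\eta_\alpha^{M_\alpha}$ becomes the bijection-indexed summand of $\mathrm{Hom}_{\HQ}(Y(x_i),\mu_iY(x_i))$, and then says this ``can be applied inductively'' to every $X\otimes M_{\alpha_1}\otimes\cdots\otimes M_{\alpha_r}$. The gap is the step you flag yourself, but the obstruction is not where you locate it. A quasi-additive function for the neutral factor $X$ is easy: $h_{x_i}-\delta_{x_i}-\sum_{j\to i}h_{x_j}$ works. The problem is the multiset. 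At the level of classes, Proposition~\ref{prop : mutation for objects in CQ} gives $[\mu_iY(x_i)]\,Y_{\tau^{-1}x_i}=F_i\prod_{j\to i}Y_{x_j}\prod_{i\to k}Y_{\tau^{-1}x_k}$. So the ``remaining part'' you want to assign to $X$ is the virtual class $F_iY_{\tau^{-1}x_i}^{-1}\prod_{i\to k}Y_{\tau^{-1}x_k}$, not an object. Concretely, for an arrow $j\to i$ the simple $S_j$ lies in $H(x_j)$. It does not lie in $\mu_iH(x_i)$, because every member $y$ of $\mu_iH(x_i)$ is a module with $\mathrm{Hom}(P_i,y)\neq 0$. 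Hence the check you announce via Lemma~\ref{lem : morphism eta} fails.

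Recording a tilting history cannot fix this, because the clash already occurs between honest objects of $\RQ'$. Take $Q=2\leftarrow 1\to 3$, labelled so that $t_{1\to2}=0$ and $t_{1\to3}=1$. Properties (1)--(4) of Theorem~\ref{thm : exceptional sequences} force $x_1^{(1)}=x_3^{(2)}=I_3$ and $x_3^{(1)}=P_3$. Hence $\barpi_{1\to2}=\{P_1\}$, $\barpi_{1\to3}=\{P_1,I_3\}$, $\barpi_{3\to\ast}=\{P_3,I_3,P_1\}$, and the codomain of $\eta_{3\to\ast}^{M_{3\to\ast}}$ is $M':=I_3\otimes I_3\otimes P_1$. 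Then $M'\otimes M_{1\to2}\simeq M_{1\to3}\otimes M_{1\to3}$ in $\RQ'$ via a braiding. However, (1), (2) and monoidality send these two objects to $\mu_3Y(x_3)\otimes Y(x_1)$ and $Y(x_1)\otimes Y(x_1)$. Since $H(P_3)=\{P_3,I_3,P_1\}$ and $H(P_1)=\{P_1,I_1,I_2,I_3\}$, their multisets have $3+4$ and $4+4$ elements. By \eqref{eq : def of morphisms in CQ} there is then not even a nonzero morphism between them. So no monoidal functor on $\RQ'$ satisfies (1) and (2) for this $Q$, and the paper's one-sentence induction does not supply the missing step either. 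The same example breaks Corollary~\ref{cor : unique decomposition}, since $\barpi_{1\to3}=\{I_3\}\sqcup\barpi_{1\to2}$. That corollary therefore cannot certify that your data depend only on the isomorphism class. What one can hope to keep is either an assignment made summand by summand on each complex $C_\bullet(M)$ from its tilting data, or identities between classes in a localisation of $K_0(\HQ)$. The latter is in effect how the proof of Theorem~\ref{thm : char of standards as Euler char} uses $\mathcal{D}_Q$: it multiplies by the virtual class $F_iY_{\tau^{-1}x_i}^{-1}\prod_{i\to j}Y_{\tau^{-1}x_j}$.
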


 \begin{proof}
     Let $\alpha \in \overline{Q}_1$, denote $i := s(\alpha)$ and let us write $M_{\alpha} := z_1 \otimes \cdots \otimes z_l$ where $z_1, \ldots , z_l$ are the elements of $\barpi_{\alpha}$. Let us denote by $M'_{\alpha}$ the codomain of $\eta_{\alpha}^{M_{\alpha}}$.  By construction, $ M'_{\alpha} \simeq z'_1 \otimes  \cdots \otimes z'_l$ where $\{z'_1 , \ldots , z'_l\} = \barpi_{\alpha} \setminus \{x_i\} \sqcup \{Sx_i\}$. Moreover  by Proposition~\ref{prop : multisets barpi}, we have $\barpi_{\alpha} \subseteq H(x_i)$. Combining these two facts, we see that $\eta_{\alpha}^{M_{\alpha}}$ can be naturally viewed as an element in $\mathrm{Hom}_{\HQ}(Y(x_i) , \mu_iY(x_i))$. In other words, we obtain an (injective) linear map 
     $$ \mathrm{Hom}_{\RQ}(M_{\alpha} , M'_{\alpha}) \longrightarrow \mathrm{Hom}_{\HQ} \left( Y(x_i) , \mu_iY(x_i) \right) .   $$
     Now this argument can be applied inductively to every indecomposable object in $\RQ'$, as these are all (up to isomorphism) of the form $X \otimes M_{\alpha_1} \otimes \cdots \otimes M_{\alpha_r}$. 
 \end{proof}

 We can now establish the second main result of this paper. 

  \begin{thm} \label{thm : char of standards as Euler char}
   Assume there exists a height function $\xi : I \rightarrow \mathbb{Z}$ compatible with $Q$, i.e. $\xi(j) = \xi(i)-1$ if there is an arrow $i \rightarrow j$ in $Q$. Let $\alpha_1, \ldots , \alpha_r$ in $\overline{Q}_1$
 and $M := M_{\alpha_1} \otimes \cdots \otimes M_{\alpha_r}$. Let also $\mathfrak{m}$ denote the dominant monomial given by $\mathfrak{m} := [\mathcal{D}_Q(M)]$. Then we have     
 $$ \chi \left( \mathcal{D}_Q(C_{\bullet}(M) \right)_{\mid F_i := 1} = \tchi_q(\Delta(\mathfrak{m})) .  $$
  \end{thm}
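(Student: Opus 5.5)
Both sides of the identity are multiplicative in $M$, so the plan is to reduce to a single factor $M_\alpha$ and compare $q$-characters by induction on $d(M)$, mirroring the inductive construction of $C_{\bullet}(M)$ in the proof of Theorem~\ref{thm : main thm for standards}.

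\emph{Reduction to one factor.} On the representation side, $\tchi_q$ is a ring homomorphism and $\Delta(\mathfrak{m})$ is an ordered tensor product of fundamental modules, so $\tchi_q(\Delta(\mathfrak{m}))=\prod_l \tchi_q(L(Y_{s(\alpha_l)}))$. Here I use Proposition~\ref{prop : functor FQ}(1), by which $\mathcal{D}_Q(M_{\alpha_l})=Y(x_{s(\alpha_l)})$ has class $Y_{s(\alpha_l),\xi(s(\alpha_l))}$. On the complex side, the inductive construction gives a distinguished triangle in $\mathcal{K}^b(\RQ)$ relating $M_{\alpha_1}\otimes C_\bullet(M')$, $D_\bullet$ and $C_\bullet(M)$, where $D_\bullet = X\otimes C_\bullet(N)$. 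The Euler characteristic is additive on mapping cones and $\mathcal{D}_Q$ is additive and monoidal, so
$$\chi(\mathcal{D}_Q C_\bullet(M)) = [\mathcal{D}_Q M_{\alpha_1}]\,\chi(\mathcal{D}_Q C_\bullet(M')) - [\mathcal{D}_Q X]\,\chi(\mathcal{D}_Q C_\bullet(N)).$$
Rather than recombining this recursion directly, I would first prove the cleaner statement $\chi(\mathcal{D}_Q C_\bullet(M)) = \prod_l \chi(\mathcal{D}_Q C_\bullet(M_{\alpha_l}))$. The route is to show that $C_\bullet(M_{\alpha_1})\otimes C_\bullet(M')$ is itself purely $\RQ$-exact almost split with degree-zero term $M$. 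Its degree-$0$ differential is the column of the $\eta$'s, and left almost splitness follows from pure exactness of each factor. By the uniqueness part of Theorem~\ref{thm : main thm for standards}, this product complex is then homotopic to $C_\bullet(M)$, and Euler characteristics are homotopy invariant in the split Grothendieck group.

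\emph{The single-factor case.} This is the heart of the proof and the step I expect to be the main obstacle. For $M=M_\alpha$ with $i=s(\alpha)$, the recursion above with $r=1$ gives
$$\chi_\alpha = Y_{x_i} - [\mathcal{D}_Q X]\prod_{\beta\in \mathrm{in}(i)\cap Q_1}\chi_\beta,$$
where $\chi_\alpha := \chi(\mathcal{D}_Q C_\bullet(M_\alpha))$. Proposition~\ref{prop : functor FQ}(2) identifies the codomain of $\eta_\alpha$ with $\mu_i Y(x_i)$. Remark~\ref{rk : Serre tiltings vs Chari's action} together with Proposition~\ref{prop : mutation for objects in CQ} then gives
$$[\mu_iY(x_i)] = F_i\,Y_{i,\xi(i)}\,A_{i,\xi(i)+1}^{-1},$$
and hence $[\mathcal{D}_Q X]\prod_\beta Y_{x_{s(\beta)}} = F_i Y_{i,\xi(i)}A_i^{-1}$ after adjusting indices to the convention $A_i=A_{i,\xi(i)-1}$ forced by $\xi$. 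After specializing the $F$'s, the recursion becomes, writing $\tchi_i:=\tchi_q(L(Y_{i,\xi(i)}))$,
$$\tchi_i = Y_{i,\xi(i)}\Bigl(1 - A_i^{-1}\prod_{j\to i}\tchi_j\, Y_{j,\xi(j)}^{-1}\Bigr)$$
up to the sign convention. I would check that this is exactly the known formula for truncated $q$-characters of fundamental modules in $\Cxi^{(1)}$. In $\Cxi^{(1)}$ the renormalized truncated character is a polynomial in the $A_j^{-1}$, and the only monomials surviving truncation come from the Frenkel--Mukhin algorithm run once at each vertex below $i$. The resulting recursion, obtained by lowering at $i$ and then propagating to its in-neighbours, can be verified inductively along paths arriving at $i$, matching Corollary~\ref{cor : multisets of projectives}, which records the projectives $Z_{\boldsymbol{\dim} I_i}$.

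The delicate points are three. First, the sign and $F$-specialization conventions must be made consistent; the statement here writes $F_i:=1$ while the introduction writes $-1$, and I would settle which one makes the alternating signs match. Second, I need that $\mathcal{D}_Q X$ contributes no extra monomials beyond the $F$-factors. Third, the comparison with the Frenkel--Mukhin truncated formula needs care, and I would first test it against the $A_n$ example, where $\tchi_q(L(Y_{i,\xi(i)}))$ is explicitly a sum over a chain of $A^{-1}$'s.
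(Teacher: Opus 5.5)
Your cone recursion is the engine of the paper's proof, but as written your plan has two gaps.

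\textbf{First gap: the reduction to one factor.} It rests on the claim that $C'\otimes C''$ is purely $\RQ$-exact, where $C':=C_\bullet(M_{\alpha_1})$ and $C'':=C_\bullet(M')$. Your justification only concerns the degree-$0$ map. Exactness of $T\otimes C'\otimes C''$ in positive degrees means exactness of the total complex of $\mathrm{Hom}_{\RQ}(T\otimes C'_p\otimes C''_q,Y)$. Pure exactness of $C'$ kills the homology in $p>0$. What remains is exactness of $q\mapsto \mathrm{Hom}(T\otimes C'_0\otimes C''_q,Y)$ modulo the maps factoring through $\mathrm{id}_T\otimes d_0^{C'}\otimes\mathrm{id}$. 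That is a condition on a quotient, and pure exactness of $C''$ does not give it.

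You do not need this step. Your recursion holds for every $M$, with $N=M'\otimes\bigotimes_{\beta\in Q_1\cap\mathrm{in}(i)}M_\beta$ and $d(N)<d(M)$. So induction on $d(M)$ yields $\chi(\mathcal{D}_QC_\bullet(M))=\chi(\mathcal{D}_QC_\bullet(M_{\alpha_1}))\,\chi(\mathcal{D}_QC_\bullet(M'))$ directly. The paper skips even this. It substitutes the inductive values $\tchi_q(\Delta(\mathfrak m'))$ and $\tchi_q(\Delta(\mathfrak n))$, with $\mathfrak n=\mathfrak m'\prod_{j\to i}Y_{x_j}$, into both terms and cancels $\tchi_q(\Delta(\mathfrak m'))$.

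\textbf{Second gap: the single-factor identity.} You leave it to a Frenkel--Mukhin check, and under your normalization it is false. With $[Y(x_i)]=Y_{i,\xi(i)}$, the target $\tchi_q(L(Y_{i,\xi(i)}))$ is the single monomial $Y_{i,\xi(i)}$. But for a source $i$, the complex $Y(x_i)\to\mu_iY(x_i)$ has Euler characteristic $Y_{x_i}(1-F_iA_i^{-1})$, which has two terms.

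The correct dictionary is $Y_{x_i}=Y_{i,\xi(i)-2}$ and $Y_{\tau^{-1}x_i}=Y_{i,\xi(i)}$. With it:
\begin{itemize}
\item[(a)] Proposition~\ref{prop : mutation for objects in CQ} gives $[\mu_iY(x_i)]=F_iY_{i,\xi(i)-2}A_{i,\xi(i)-1}^{-1}$, with no index adjustment.
\item[(b)] $[\mathcal{D}_QX]=F_iY_{\tau^{-1}x_i}^{-1}\prod_{i\to j}Y_{\tau^{-1}x_j}$ exactly, which settles your worry about extra monomials.
\item[(c)] Comparing the base case with $\tchi_q(L(Y_{i,\xi(i)-2}))=Y_{i,\xi(i)-2}(1+A_{i,\xi(i)-1}^{-1})$ forces $F_i:=-1$, as in the introduction.
\end{itemize}
Using the inductive hypothesis for the in-neighbours $j\to i$, your recursion for $M_\alpha$, multiplied by $Y_{i,\xi(i)}$, becomes
\[ Y_{i,\xi(i)}\,\tchi_q(L(Y_{i,\xi(i)-2})) = Y_{i,\xi(i)-2}Y_{i,\xi(i)} + \prod_{i\to j}Y_{j,\xi(j)}\prod_{j\to i}\tchi_q(L(Y_{j,\xi(j)-2})). \]
This is just the truncated $q$-character of the T-system sequence~\eqref{eq : T-system} at $p=\xi(i)-2$. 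To see this, use $\tchi_q(L(Y_{i,\xi(i)-2}Y_{i,\xi(i)}))=Y_{i,\xi(i)-2}Y_{i,\xi(i)}$ and $\tchi_q(L(Y_{j,\xi(j)}))=Y_{j,\xi(j)}$. Note also that $\xi(i)-1$ equals $\xi(j)$ when $i\to j$, and equals $\xi(j)-2$ when $j\to i$. This is exactly how the paper closes the induction; no Frenkel--Mukhin analysis is needed.
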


   \begin{proof}
       The proof goes by induction on $\boldsymbol{d}(M)$. The initial step corresponds to $\boldsymbol{d}(M) = \alpha_i$ which is possible only if $r=1$ i.e. $M \simeq M_{\alpha_1}$ with $i := s(\alpha_1)$ is a source in $Q$. In that case the complex $\mathcal{D}_Q(C_{\bullet}(M))$ is of the form  $Y(x_i) \longrightarrow \mu_iY(x_i) \longrightarrow 0 \longrightarrow \cdots $ so by Proposition~\ref{prop : mutation for objects in CQ} its Euler characteristics  is given by 
       $$ \chi \left( \mathcal{D}_Q(C_{\bullet}(M) \right) = Y_{x_i} - F_i Y_{\tau^{-1}x_i}^{-1} \prod_{i \rightarrow j}Y_{\tau^{-1}x_j} = Y_{x_i}(1 -F_iA_i^{-1}) .  $$
       On the other hand the truncated $q$-character of the fundamental representation $L(Y_{i, \xi(i)-2})$ is easily seen to be $Y_{i,\xi(i)-2}(1 + A_{i, \xi(i)-1}^{-1}) $. Hence, identifying $Y_{x_i} = Y_{\xi(i)-2} , A_i = A_{i, \xi(i)-1}$, the desired identity holds in that case. Now by construction if $\boldsymbol{d}(M)$ is not a simple root then $C_{\bullet}(M)$ is isomorphic to the (shifted) mapping cone of a morphism of chain complexes 
       $$ M_{\alpha_1} \otimes  C_{\bullet}(M') \enspace \longrightarrow \enspace  C_{\bullet}(M'_{\alpha_1} \otimes M') $$
       where $M' := M_{\alpha_2} \otimes \cdots \otimes M_{\alpha_r}$ and $M'_{\alpha_1}$ denotes the codomain of $\eta_{\alpha_1}^{M_{\alpha_1}}$ in $\RQ$. Now by Proposition~\ref{prop : functor FQ} we have $\mathcal{D}_Q(M'_{\alpha_1}) = \mu_i \mathcal{D}_Q(M_{\alpha_1})$ where $i := s(\alpha_1)$. Therefore  we get 
       $$  \chi \left( \mathcal{D}_Q(C_{\bullet}(M'_{\alpha_1} \otimes M' )) \right)   = F_i Y_{\tau^{-1}x_i}^{-1} \prod_{i \rightarrow j}Y_{\tau^{-1}x_j} \cdot    \chi \left( \mathcal{D}_Q(C_{\bullet}(N)) \right)  $$
       where $N \simeq M' \otimes \bigotimes_{\beta \in Q_1 \cap \mathrm{in}(i)}M_{\beta} $. 
       Using the induction assumption we obtain
    \begin{align*}
        Y_{\tau^{-1}x_i} \cdot \chi \left( \mathcal{D}_Q(C_{\bullet}(M)) \right) &= Y_{x_i}Y_{\tau^{-1}x_i} \cdot  \chi \left( \mathcal{D}_Q(C_{\bullet}(M')) \right)  - F_i  \prod_{i \rightarrow j}Y_{\tau^{-1}x_j} \cdot    \chi \left( \mathcal{D}_Q(C_{\bullet}(N)) \right)  \\
        &= Y_{x_i}Y_{\tau^{-1}x_i} \cdot \tchi_q(\Delta(\mathfrak{m'})) - F_i \prod_{i \rightarrow j}Y_{\tau^{-1}x_j} \cdot    \tchi_q (\Delta(\mathfrak{n}))
    \end{align*}
    where $\mathfrak{m}'$ and $\mathfrak{n}$ are the dominant monomials respectively given by $[\mathcal{D}_Q(M')]$ and $[\mathcal{D}_Q(N)]$. 
So in order to conclude, it suffices to show that the  standard truncated $q$-characters satisfy the following identity:
$$ \tchi_q(\Delta(\mathfrak{m})) = Y_{x_i} Y_{\tau^{-1}x_i} \cdot \tchi_q(\Delta(\mathfrak{m'})) + \prod_{i \rightarrow j}Y_{\tau^{-1}x_j} \cdot    \tchi_q (\Delta(\mathfrak{n})) .   $$
But given that $\mathfrak{m} = Y_{x_i}\mathfrak{m}'$ and  $\mathfrak{n} = \mathfrak{m}' \cdot \prod_{j \rightarrow i}Y_{x_j}$, this boils down to 
$$ Y_{\tau^{-1}x_i} \tchi_q(L(Y_{x_i})) = Y_{x_i}Y_{\tau^{-1}x_i} +\prod_{i \rightarrow j}Y_{\tau^{-1}x_j} \cdot \prod_{j \rightarrow i} \tchi_q(L(Y_{x_j})) $$
or in other terms 
$$ Y_{i, \xi(i)} \cdot \tchi_q(L(Y_{i, \xi(i)-2})) = Y_{i, \xi(i)-2}Y_{i, \xi(i)} \cdot + \prod_{i \rightarrow j}Y_{j, \xi(j)} \cdot \prod_{j \rightarrow i} \tchi_q(L(Y_{j, \xi(j)-2})) . $$
 This is simply a $T$-system,  as $\xi(j) = \xi(i)-1$ if $i \rightarrow j$ while $\xi(j)-2 = \xi(i)-1$ is $j \rightarrow i$. The theorem is proved. 
\end{proof}

 \subsection{A remark on $\mathcal{D}_Q$}
 \label{sec : functor DQ}

  The choice of notation $\mathcal{D}_Q$ for the above functor is made because $\mathcal{D}_Q$ should be viewed as a categorical pendant of the algebraic morphism  $\widetilde{D}_Q$ introduced in our joint work \cite{CasbiLi} with J.-R. Li (under a slightly modified form, and under its definitive form in \cite{CasbiLi2}). Though we will  not make any precise statement here, we would like to summarize the situation as follows: when passing to the Grothendieck group, the classes of objects in $\RQ$ can be written as products of dimension vectors of indecomposable objects in $\mathrm{mod}\,\mathbf{k}Q$, while as explained in Section~\ref{sec : hammock category}, the classes of objects in $\HQ$ can be written as Laurent monomials in $Y_{x_i},Y_{\tau^{-1}x_i}, i \in I$. Thus  the picture to keep in mind is the following
  $$ 
   \xymatrix{  
    \mathcal{K}^b(\RQ') \ar[rr]^{\mathcal{D}_Q} \ar[d]^{\chi} & {} &  \mathcal{K}^b(\HQ) \ar[d]^{\chi} \\
    \mathbf{k}[\mathfrak{t}^{*}]  \ar@{.>}[rd] & {} &  \YZ \ar[ld]_{\widetilde{D}_Q} \\
    {} & \mathbf{k}(\mathfrak{t}^{*}) & {} 
   }
  $$
  In view of Theorems~\ref{thm : main thm for standards} and~\ref{thm : char of standards as Euler char}, this shows that the complexes $C_{\bullet}(M)$ together with their images under  $\mathcal{D}_Q$ provide the relevant objects in order to obtain a uniform proof as well as (and most importantly) a conceptual interpretation of the identities obtained in \cite{CasbiLi2} (cf. the introduction above).

 \subsection{Example}

 We now illustrate the results above on a concrete example with  the quiver  $Q = 1 \xrightarrow[]{\alpha} 2 \xleftarrow[]{\beta} 3$ with the height function $\xi(2)=-2 , \xi(1) = \xi(3) = -1$ and $M := M_{\beta}$. The exceptional sequences given by Theorem~\ref{thm : exceptional sequences} are as follows 
 $$ \xymatrixrowsep{2pt} \xymatrix{
 P_2 \ar@{.>}[r] & P_1 \ar@{.>}[r] &  I_2  \\
 P_3  \ar@{.>}[r]  & I_2 \ar@{.>}[r]  & I_3 \\
 P_1 \ar@{.>}[r] & I_1 \ar@{.}[r] & I_1 
 }
 $$
so that $t_{\alpha} = 0$ and $t_{\beta} = 1$. We have $\barpi_{\beta} = \pi_2^{\leq 1} \sqcup \pi_1^{<0} \sqcup \pi_3^{<1}  = \{P_2,P_1\} \sqcup \{P_3\} = \{P_2,P_1,P_3\}$. Figure~\ref{fig : braid diagram} shows the chain complex $C_{\bullet}(M_{\beta})$ as well as the Euler characteristics of its image under $\mathcal{D}_Q$ (written under the form of a graph where each arrow carries a color $i$ representing multiplication by $A_i^{-1}$). This graph is well-known to be that of the truncated $q$-character of the fundamental representation $L(Y_{2,-2})$. Note that on this diagrammatic picture, it is easy to check that that the sequence on the left of Figure~\ref{fig : braid diagram} is indeed a chain complex.

 \begin{center}

 \begin{figure}

    \begin{tikzpicture}


 \node (a1) at (-0.8,0) {$P_2$};
 \node (a5) at (-0.4,0) {$\otimes$};
\node (a2) at (0,0) {$P_1$};
\node (a6) at (0.4,0) {$\otimes$};
\node (a3) at (0.8,0) {$P_3$};

\node (b1) at (-0.8,-2) {$I_2$}; 
\node (b5) at (-0.4,-2) {$\otimes$};
\node (b6) at (0.4,-2) {$\otimes$};
\node (b2) at (0,-2) {$P_1$};
\node (b3) at (0.8,-2) {$P_3$};

\node (c1) at (-2.8,-4) {$I_2 $};
\node (c5) at (-2.4,-4) {$\otimes$};
\node (c6) at (-1.6,-4) {$\otimes$};
\node (c2) at (-2,-4) {$I_1$};
\node (c3) at (-1.2,-4) {$P_3$};

\node (d1) at (1.2,-4) {$I_2$}; 
\node (d5) at (2.4,-4) {$\otimes$};
\node (d6) at (1.6,-4) {$\otimes$};
\node (d2) at (2,-4) {$P_1$};
\node (d3) at (2.8,-4) {$I_3$};

\node (e1) at (-0.8,-6) {$I_2$};
\node (e5) at (-0.4,-6) {$\otimes$};
\node (e6) at (0.4,-6) {$\otimes$};
\node (e2) at (0,-6) {$I_1$};
\node (e3) at (0.8,-6) {$I_3$};

 \node (plouf) at (0,-4) {$\oplus$};

\draw[thick,->] (a1)--(b2);
\draw[thick,->] (a2)--(b1);
\draw[thick,->] (b2) to[out=-90,in=90] (c2);
\draw[thick,->] (b3) to[out=-90,in=90] (d1);
\draw[thick,->] (b1) to[out=-90,in=90] (d3);
\draw[thick,->] (c3) to[out=-90,in=90] (e1);
\draw[thick,->] (c1) to[out=-90,in=90] (e3);
\draw[thick,->] (d2) to[out=-90,in=90] (e2);


\node (x) at (8,0) {$Y_{2,-2}$};
\node (y) at (8,-2) {$Y_{2,0}^{-1}Y_{1,-1}Y_{3,-1}$};
\node (z) at (6,-4) {$Y_{1,1}^{-1}Y_{3,-1}$}; 
\node (t) at (10,-4) {$Y_{3,1}^{-1}Y_{1,-1}$}; 
\node (u) at (8,-6) {$Y_{2,2}Y_{1,1}^{-1}Y_{3,1}^{-1}$};

\draw[thick] (x)--(y) node[pos=0.5,left,font=\tiny] {$2$} ;

\draw[thick] (y)--(z) node[pos=0.6,above,font=\tiny] {$1$} ;

\draw[thick] (y)--(t) node[pos=0.6,above,font=\tiny] {$3$} ;

\draw[thick] (z)--(u) node[pos=0.4,below,font=\tiny] {$3$} ;

\draw[thick] (t)--(u) node[pos=0.4,below,font=\tiny] {$1$} ;

    \end{tikzpicture}

 \caption{The chain complex $C_{\bullet}(M_{\beta})$ (on the left) and the graph displaying the classes in $K_0(\HQ)$ of the images of each object under the functor $\mathcal{D}_Q$ (on the right).}
\label{fig : braid diagram}

 \end{figure}

 \end{center}

\section{Cluster characters as Euler characteristics}
 \label{sec : Euler characteristics and qcharacters}

 The purpose of this section is to show that the methods proposed in this paper can be used not only to recover the $q$-characters of standard modules in the HL categories, but also (at least some) simple modules. Though we will leave a more systematic investigation of this problem for future works, we show here an instance of this when $Q$ is an arbitrary orientation of a type $A_n$ Dynkin graph. Let us choose a height function $\xi I \rightarrow \mathbb{Z}$ adapted to $Q$.  In view of the monoidal categorification results due to Brito-Chari \cite{BC} and Kashiwara-Kim-Oh-Park \cite{KKOP1}, we know that all (the classes of) simple modules in the HL category $\Cxi^{(1)}$  are cluster monomials, and  that their truncated $q$-characters can be identified (up to a mild change of variables) with their cluster expansion with respect to an appropriately chosen initial seed.  Therefore in what follows we will essentially work exclusively using the language of cluster algebras and will not rely on any representation-theoretic tool throughout the proofs. Let us begin with a brief reminder of the combinatorics of finite type cluster algebras.

 \subsection{Identities between $F$-polynomials in  cluster algebras of type $A_n$}
  \label{sec : reminders AQ}

 Given a Dynkin quiver $Q$ (in this section, we will mostly focus on the case where $Q$ is of type $A_n, n\geq 1$), we will be denoting by $\AQ$ the cluster algebra with initial seed
 $ ((u_1, \ldots , u_n),  Q)$.
  It follows from Fomin-Zelevinsky's results \cite{FZ2} that $\AQ$ is a finite type cluster algebra, i.e. $\AQ$ contains finitely many  clusters (equivalently finitely many cluster variables). Consequently, the non-frozen cluster variables in $\AQ$ are in bijection with the set of almost positive roots which is defined as $\Delta_+ \cup \Pi_-$. We will denote by $u[\beta]$ the cluster variable corresponding to $\beta$ under this bijection, for each almost positive root $\beta$. In particular, the cluster variables $u[- \alpha_i] , i \in I$  respectively coincide with the cluster variables $u_1, \ldots , u_n$ belonging to the initial cluster. Fomin-Zelevinsky \cite{FZ4} proved that the cluster expansion of $u[\beta]$  with respect to the initial cluster can be written under the form 
  $$ x[\beta] = \mathbf{x}^{g_{\beta}} \cdot F[\beta](\widehat{y_1}, \ldots , \widehat{y_n}) $$
  where $g_{\beta} := (g_1, \ldots , g_n)$ is an integer-valued vector called the $g$-vector (here $\mathbf{u}^g := u_1^{g_1} \cdots u_n^{g_n}$) and $F[\beta]$ is a polynomial in the variables $\widehat{y}_1 , \ldots , \widehat{y}_n$ which are defined by 
  $$ \widehat{y}_i = \prod_{i \rightarrow j} u_j \cdot \prod_{i \leftarrow j} u_j^{-1} .  $$
The polynomial $F[\beta]$ is called the $F$-polynomial of $u[\beta]$.  It is known that $F[\beta]$ has constant term $1$, and that its highest degree term is given by $\mathbf{\widehat{y}}^{\beta} := \widehat{y}_1^{a_1} \cdots \widehat{y}_n^{a_n}$ where $\beta := \sum_i a_i \alpha_i$.

 \begin{lem} \label{lem : cluster mutation}
  Assume $Q$ is an arbitrary orientation of a type $A_n$ Dynkin diagram, with $n \geq 1$. Let  $\beta \in \Delta_+$ and $i$ be a sink in $\Qbeta$. Then we have $ F[\beta]  = F[\beta - \boldsymbol{\dim}I_i^{\beta}] + \widehat{y_i} F[\beta-\alpha_i] $. 
 \end{lem}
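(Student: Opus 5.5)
Since $Q$ is of type $A_n$, every positive root is $\beta=\alpha_a+\alpha_{a+1}+\cdots+\alpha_b$ for an interval $[a,b]$, so $\Qbeta$ is a path and $F[\beta]$ is the $F$-polynomial of the indecomposable $\mathbf{k}Q$-module $M_\beta$ of dimension vector $\beta$. I would use the Caldero--Chapoton / Derksen--Weyman--Zelevinsky description
$$F[\beta]=\sum_{N} \widehat{\mathbf y}^{\,\boldsymbol{\dim} N},$$
where $N$ runs over the subrepresentations of $M_\beta$ up to the equivalence given by their dimension vectors. In type $A$ every quiver Grassmannian of $M_\beta$ is a point or empty, so the sum is multiplicity free. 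Because $M_\beta$ is thin and supported on an interval, a subrepresentation is just a subset $S\subseteq\mathrm{Supp}(\beta)$ closed under the arrows of $\Qbeta$, i.e. closed under successors. Thus $F[\beta]=\sum_{S}\widehat{\mathbf y}^{S}$, the sum running over the successor-closed subsets $S$ of $\Qbeta$.

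With this formula the identity becomes a bijective splitting of the set of closed subsets according to whether they contain the sink $i$.

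\textbf{Closed subsets not containing $i$.} Every vertex of $\Qbeta$ with an oriented path to $i$ must also be excluded. The support of $I_i^{\beta}$ is exactly the set of such vertices, including $i$. Removing it splits the path $\Qbeta$ into at most two intervals, and these form the support of $\beta-\boldsymbol{\dim}I_i^\beta$. Since no arrow joins the removed part to the rest in the wrong direction, closed subsets avoiding $i$ correspond exactly to closed subsets of this complement. For a non-connected support, $F$ of a sum of orthogonal roots is understood multiplicatively, i.e. via the corresponding direct sum module, and I would spell out this convention. The generating function of this part is therefore $F[\beta-\boldsymbol{\dim}I_i^\beta]$.

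\textbf{Closed subsets containing $i$.} Since $i$ is a sink in $\Qbeta$, $\{i\}$ is itself closed. For any closed $S\ni i$, the set $S\setminus\{i\}$ is closed in the full subquiver on $\mathrm{Supp}(\beta)\setminus\{i\}$; this is where sinkness is used, since nothing in $S$ needs $i$ to be present except via arrows into $i$, which only matter for the reverse map. Conversely, for any closed $T$ in that subquiver, $T\cup\{i\}$ is closed in $\Qbeta$: the successors of vertices of $T$ lie in $T$ or equal $i$, and $i$ has no successors. This gives $\widehat{y_i}\,F[\beta-\alpha_i]$. Again $\beta-\alpha_i$ may have disconnected support when $i$ is interior, so the product convention for $F$ is needed there too.

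\textbf{Main obstacle.} The combinatorics is routine. The delicate point is justifying the subrepresentation formula in the $\widehat{y}$-normalization used here. The convention $\widehat{y}_i=\prod_{i\to j}u_j\prod_{j\to i}u_j^{-1}$ must match subrepresentations rather than quotients; otherwise the roles flip to $I$ versus $P$ and sources versus sinks. I would check this first on $A_2$, where one of $F=1+y_1+y_1y_2$ or $F=1+y_2+y_1y_2$ must come out, and adjust to quotients if needed. If that fails, the fallback is to derive the recursion directly from an exchange relation, $u[\beta]\,u[\gamma]=\ldots$, at the sink $i$ and compare $F$-polynomials.
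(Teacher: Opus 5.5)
Your proof is correct, but it takes a genuinely different route from the paper's.

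\textbf{The paper's route.} The paper never uses quiver Grassmannians. It works in the polygon model. It writes the Ptolemy relation for the crossing diagonals of $u[\beta]$ and $u[-\alpha_i]$ as the exchange relation $u[\beta]\,u[-\alpha_i]=u[\beta']+\prod_{i\to j}u_j\cdot u[\beta-\alpha_i]$, where $\beta'=\beta-\boldsymbol{\dim}I_i^{\beta}$. This needs a case split: $i$ interior or at an end of the interval, and, in the latter case, whether $i$ is a sink of $Q$. The paper then extracts the $F$-polynomial identity by comparing top-degree terms $\widehat{\mathbf y}^{\beta}$ and constant terms, which removes the $g$-vector prefactors.

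\textbf{What each route buys.} You import the Caldero--Chapoton/Derksen--Weyman--Zelevinsky formula. After that, the lemma is a bijection on successor-closed subsets of the thin module $M_\beta$, with no case distinction and no $g$-vector bookkeeping. The cost is heavier machinery and sensitivity to conventions. The paper's route stays inside cluster combinatorics and also yields the exchange relation among the cluster variables themselves.

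\textbf{Your details.} Your splitting is fine. Sinkness is needed only to show that $T\cup\{i\}$ is closed. Your multiplicative reading of $F$ on the non-root vectors $\beta'$ and $\beta-\alpha_i$ agrees with the paper's usage, where $u[\beta']$ and $u[\beta-\alpha_i]$ are cluster monomials.

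\textbf{The convention check.} The check you flag is not a formality. With the quotient convention the stated identity is false: take $1\to 2\leftarrow 3$, $\beta=\alpha_1+\alpha_2+\alpha_3$ and $i=2$. There the left side contains a pure $y_1$ term and the right side, $1+y_2(1+y_1)(1+y_3)$, does not.

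The check does come out on your side. For $Q=1\to 2$, the paper's convention gives $\widehat{y}_1=u_2$ and $\widehat{y}_2=u_1^{-1}$. Then $u[\alpha_1+\alpha_2]=(1+u_1+u_2)/(u_1u_2)=u_2^{-1}(1+\widehat{y}_2+\widehat{y}_1\widehat{y}_2)$. This is the subrepresentation generating function of $M_{\alpha_1+\alpha_2}$, whose only proper nonzero subrepresentation is $S_2$. Both conventions are uniform in $Q$, so this single test fixes the subrepresentation version for all orientations, and you do not need your fallback.
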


   \begin{proof}
   Let us choose a triangulation of a  regular $2n$-gon corresponding to the chosen orientation $Q$. The fact that $i$ belongs to $(Q_{\beta})_0$  tells us that that the diagonal corresponding to the negative simple root $-\alpha_i$ crosses the diagonal corresponding to the positive root $\beta$. Hence they are not compatible, and thus the Ptolemy relation 
   $$ u[\beta]u[-\alpha_i] = M_1 + M_2 $$
   is an exchange relation corresponding to a cluster mutation in $\AQ$. Let us now describe $M_1$ and $M_2$ explicitly. Let us write $\beta$ as a segment $[k,l]$ with $k \leq i \leq l$. Also let us denote by $(a,b)$ (resp. $(c,d)$) the diagonals respectively corresponding to $u[\beta]$ and $u[-\alpha_i]$.   It is convenient to distinguish two cases, whether $i$ is strictly inside $[k,l]$ or on the border. 

    In the former case, as $i$ is a sink in $\Qbeta$, the situation looks like 
    $$ k   \cdots \leftarrow  k' \rightarrow  \cdots  \rightarrow i  \leftarrow \cdots \leftarrow l' \rightarrow \cdots l .  $$
    We see that (up to renaming) $(a,d)$ crosses the same diagonals corresponding to the negative simple roots  as $(a,b)$  except the one corresponding to $u[-\alpha_i]$ (i.e. $(c,d)$), while $(b,c)$ is an edge of the $2n$-gon. Thus $u[a,d]u[b,c] = u[\beta-\alpha_i]$. On the other hand, we see that $(b,d)$ (resp.  $(a,c)$)  crosses the diagonals corresponding to $u[\alpha_p] , k \leq p <k' $ (resp. $l'< p \leq l $) so that $u[b,d]u[a,c] = u[\alpha_{k} + \cdots \alpha_{k'-1}]u[\alpha_{l'+1} + \cdots \alpha_l]$. But these two roots being obviously compatible, this product is a cluster monomial, whose denominator vector is given by 
    $$ \beta - (\alpha_{k'} + \cdots + \alpha_{l'}) = \beta - \boldsymbol{\dim}I_i^{\beta} . $$
    In the case where $i$ is on the border of $[k,l]$ (without loss of generality $i=l$) the situation looks like 
    $$ k \cdots \leftarrow k' \rightarrow \cdots \rightarrow i .  $$
    We can repeat the same arguments as in the previous case. The only difference is if $i$ is not a sink in $Q$, i.e. there is an arrow $i \rightarrow i+1$ in $Q$; then $M_1$ will be given by $u[-\alpha_{i+1}]u[\beta-\alpha_i]$ (which is a cluster monomial as $-\alpha_{i+1}$ and $\beta-\alpha_i$ are compatible). Summarizing the two cases, this exchange relation can be written uniformly as
    \begin{equation} \label{eq : cluster mutation}
          u[\beta]u[-\alpha_i] = u[\beta'] + \prod_{i \rightarrow j}u_j \cdot u[\beta-\alpha_i]
    \end{equation}
    where $\beta' := \beta - \boldsymbol{\dim}I_i^{\beta}$ and the second term on the right hand side is a cluster monomial because $i$ is a sink in $\Qbeta$. Let us write $u[\gamma] = \mathbf{u}^{g_{\gamma}} F[\gamma]$ for each root $\gamma$. As $\beta-\alpha_i \geq  \beta'$, the term of highest degree in the $\widehat{y}_l$ on the right hand side comes from $F[\beta-\alpha_i]$. Hence comparing with that coming  from the left hand side, we get 
    $$ \mathbf{u}^{g_{\beta}} \mathbf{\widehat{y}}^{\beta} u_i = \prod_{i \rightarrow j} u_j \cdot \mathbf{u}^{g_{\beta-\alpha_i}} \mathbf{\widehat{y}}^{\beta-\alpha_i} $$
    which yields
    $$ \mathbf{u}^{g_{\beta}}  u_i \widehat{y_i} = \prod_{i \rightarrow j} u_j \cdot \mathbf{u}^{g_{\beta-\alpha_i}} .  $$
   Thus, dividing~\eqref{eq : cluster mutation} by $\mathbf{u}^{g_{\beta}}  u_i $ we get 
   $$  F[\beta] = \frac{\mathbf{u}^{g_{\beta'}}}{\mathbf{u}^{g_{\beta}}u_i} F[\beta'] + \widehat{y_i}F[\beta-\alpha_i] . $$
   Now recalling that the constant term of all $F$-polynomials are $1$, this identity also implies 
   $$  1 = \frac{\mathbf{u}^{g_{\beta'}}}{\mathbf{u}^{g_{\beta}}u_i} $$
   so in conclusion we obtain 
   $$ F[\beta] =  F[\beta'] + \widehat{y_i}F[\beta-\alpha_i] .  $$
   \end{proof}

  \subsection{Combinatorial preliminaries}
   \label{sec : weights of dom functions}


This subsection provides necessary technicalities that will be needed to prove the last main result of this paper. Note that all the results in this subsection hold for any acyclic quiver without multiple edges. 
Recall that for any $\gamma := \sum_{i \in I} a_i \alpha_i$ with $a_i \geq 0$ for all $i \in I$, we will denote by $Z_{\gamma}$ the multiset containing $a_i$ copies of $x_i$ for each $i \in I$, i.e. $ Z_{\gamma} := \bigsqcup_{i \in I} \{ x_i\}^{\sqcup a_i}$. We now associate to any quasi-additive function $h$ on $\ZQ$ an element $\bomega(h) \in \bigoplus_i \mathbb{Z}_{\geq 0} \alpha_i$ as follows. For each vertex $i \in \overline{Q_0}$, we define a non-negative integer $a_i$ by induction on the total dimension of the indecomposable projective $\mathbf{k}Q$-module at $i$. Namely, we set $a_{\ast}  := 0$ and 
 $$ \forall i \in Q_0, \enspace  a_i := \max \left(  0 ,  \widetilde{h}(x_i)-\widetilde{h}(\tau^{-1}x_i) + \sum_{i \rightarrow j} a_j  \right) .  $$
 We then set 
  $$ \bomega(h) := \sum_{i \in I} a_i \alpha_i  \qquad  Z(h) :=  Z_{\bomega(h)} .  $$
  We will sometimes refer to $\bomega(h)$ as the weight of $h$. 
Recall the notations $P_i^{\beta}, I_i^{\beta}$ from Section~\ref{sec : reminders on AR}. 

  \begin{lem} \label{lem : weight first lemma}
 Let $h$ be a dominant quasi-additive function on $\ZQz$. Then we have 
 $$  \forall i \in (\Qbeta)_0 , \enspace \beta - \boldsymbol{\dim}I_i^{\beta} \leq \bomega \left( h +  h_{\tau^{-1}x_i} \right) \leq \beta - \alpha_i  . $$ 
 Moreover the left most inequality is an equality if there is at most one oriented path between any two vertices in $Q$. 
  \end{lem}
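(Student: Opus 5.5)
Throughout I read $\beta$ as the weight $\bomega(h)$ of $h$, so that $(\Qbeta)_0$ is the set of vertices $j$ with $a_j>0$. The plan is to compare directly the two recursions defining $\bomega(h)=\sum_j a_j\alpha_j$ and $\bomega(h')=\sum_j a'_j\alpha_j$, where $h':=h+h_{\tau^{-1}x_i}$.

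\textbf{Step 1: locate the change.} By definition of the hammock functions, $\widetilde{h_{\tau^{-1}x_i}}=\delta_{\tau^{-1}x_i}$. Hence $\widetilde{h'}$ agrees with $\widetilde h$ everywhere except at $\tau^{-1}x_i$, where it is larger by $1$. The vertices $x_k$ and $\tau^{-1}x_k$ are pairwise distinct, so in the quantity $\widetilde{h'}(x_k)-\widetilde{h'}(\tau^{-1}x_k)$ only the term with $k=i$ changes, and it drops by exactly $1$.

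\textbf{Step 2: set up the drops.} The recursion computes $a_k$ from the values $a_j$ with $k\to j$, starting at the sinks. Write
\[ r_k:=\widetilde h(x_k)-\widetilde h(\tau^{-1}x_k)+\sum_{k\to j}a_j , \]
so that $a_k=\max(0,r_k)$, and put $d_k:=a_k-a'_k$. An induction from the sinks gives the following.
\begin{itemize}
\item If there is no oriented path from $k$ to $i$, then $d_k=0$.
\item For $k=i$, every successor of $i$ has $d_j=0$ (acyclicity), so $r'_i=r_i-1$.
\item For $k\neq i$, $r'_k=r_k-\sum_{k\to j}d_j$.
\end{itemize}

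\textbf{Step 3: bound each drop.} Since $x\mapsto\max(0,x)$ is monotone and $1$-Lipschitz, an induction shows that every $d_k$ is nonnegative. The same argument gives
\[ d_k\le \min\Big(a_k,\ \delta_{k,i}+\sum_{k\to j}d_j\Big). \]
In particular $d_k=0$ whenever $a_k=0$, that is, whenever $k\notin(\Qbeta)_0$. So only successors lying in $\Qbeta$ contribute to the sum. A second induction then yields
\[ d_k\le p_{\Qbeta}(k,i), \]
the number of oriented paths from $k$ to $i$ inside $\Qbeta$. This gives both inequalities of the lemma.
\begin{itemize}
\item For the lower bound, $\sum_k d_k\alpha_k\le\sum_k p_{\Qbeta}(k,i)\alpha_k=\boldsymbol{\dim}I_i^{\beta}$, hence $\bomega(h')\ge\beta-\boldsymbol{\dim}I_i^\beta$.
\item For the upper bound, $i\in(\Qbeta)_0$ means $a_i=r_i\ge1$. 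Then $a'_i=\max(0,r_i-1)=a_i-1$, so $d_i=1$. Combined with $d_k\ge0$ for all $k$, this gives $\bomega(h')\le\beta-\alpha_i$.
\end{itemize}

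\textbf{Step 4: equality under unique paths.} Assume there is at most one oriented path between any two vertices of $Q$. I would show by induction that $d_k=p_{\Qbeta}(k,i)\in\{0,1\}$ for every $k$.
\begin{itemize}
\item If $k\in(\Qbeta)_0$ has a path to $i$ inside $\Qbeta$, uniqueness of paths forces exactly one successor $j$ of $k$ with $d_j=1$, and all other successors have $d_j=0$. Since $a_k=r_k\ge1$, we get $a'_k=\max(0,r_k-1)=a_k-1$, so $d_k=1$.
\item Otherwise either $a_k=0$, or no successor inside $\Qbeta$ reaches $i$. In both cases $d_k=0$.
\end{itemize}
This turns the leftmost inequality into an equality.

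The main obstacle I expect is the bookkeeping of the truncation $\max(0,\cdot)$. The recursion must be phrased through the untruncated values $r_k$, and one must check that whenever $a_k>0$ the truncation is inactive both before and after the change. The Lipschitz argument in Step 3 handles this cleanly for the inequalities. The equality case additionally needs $r_k\ge1$ on all of $(\Qbeta)_0$, which holds by the choice $\beta=\bomega(h)$.
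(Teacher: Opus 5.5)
Your proof is correct and is essentially the paper's argument. Both run an induction from the sinks comparing the recursions for $\bomega(h)$ and $\bomega(h+h_{\tau^{-1}x_i})$, show that the drop $a_k-a'_k$ lies between $\delta_{i,k}$ and $p_{\Qbeta}(k,i)$ (the paper writes this as $a_k-p_{\Qbeta}(k,i)\le a'_k\le a_k-\delta_{i,k}$), and in the unique-path case use $p_{\Qbeta}(k,i)\le 1\le a_k$ on $(\Qbeta)_0$ to rule out truncation by $\max(0,\cdot)$. Your packaging via the drops $d_k$ and the $1$-Lipschitz property is the same induction; note only that in Step 4 the case $k=i$ is covered by Step 3, not by the ``exactly one successor'' clause.
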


 \begin{proof}
  Let us  denote  $\beta = \sum_i a_i \alpha_i$ and  $c_k := \widetilde{h}(x_k), d_k := \widetilde{h}(\tau^{-1}x_k)$ for every $k \in I$. Let $h' := h +  h_{\tau^{-1}x_i}$ and $\bomega(h') := \sum_k a'_k \alpha_k$. Note that $\widetilde{h'}(x_k) = c_k$ and $\widetilde{h'}(\tau^{-1}x_k) = d_k + \delta_{i,k}$ for all $k \in I$. We prove by induction on $r_Q(k)$ that $   a_k - p_{\Qbeta}(k,i)\leq  a'_k \leq a_k - \delta_{i,k}$ for all $k \in I$.  If $k$ is a sink in $Q$ then obviously $p_{\Qbeta}(k,i) = \delta_{i,k}$ so we need to prove that $a'_k =a_k-\delta_{i,k}$.  By definition we have $a'_k = \max(0,c_k-d_k-\delta_{i,k})$. If $k \neq i$ then we get $a'_k=a_k$. If $k=i$ then as $i \in (\Qbeta)_0$ we have $0<a_i=c_i-d_i$ (as $i=k$ is a sink)  and hence $a'_k=a_i-1$. Thus the claim holds in that case. Let us fix $k \in I$ and assume the desired inequalities hold for all $j$ such that $r_Q(j)<r_Q(k)$. Then we have 
   \begin{align*}
      a'_k &= \max \left( 0 , c_k-d_k-\delta_{i,k} + \sum_{k \rightarrow j} a'_j \right) . 
   \end{align*}
   If $k \notin (\Qbeta)_0$ then in particular $k \neq i$ and moreover $a_k=0$ so $c_k-d_k + \sum_{k \rightarrow j}a_j \leq 0$. Hence  using the induction assumption we get that 
   $$ c_k-d_k + \sum_{k \rightarrow j} a'_j  \leq    c_k-d_k + \sum_{k \rightarrow j} (a_j - \delta_{i,j}) \leq   c_k-d_k + \sum_{k \rightarrow j} a_j \leq 0  $$
   and hence $a'_k=0$. So we get $ a_k- p_{\Qbeta}(k,i) =  -p_{\Qbeta}(k,i) =\leq 0 = a'_k \leq a_k = a_k-\delta_{i,k}$ as desired. If on the other hand $k \in (\Qbeta)_0$ then $a_k>0$ so in particular $a_k- \delta_{i,k} \geq 0$ and moreover using the induction assumption we have  
   $$ a_k- \delta_{i,k} = c_k-d_k - \delta_{i,k}  + \sum_{k \rightarrow j} a_j \geq c_k-d_k - \delta_{i,k}  + \sum_{k \rightarrow j} a'_j .   $$
   Therefore $a_k- \delta_{i,k} \geq a'_k$. On the other hand,
   \begin{align*}
       a'_k &\geq c_k-d_k-\delta_{i,k}+ \sum_{k \rightarrow j}a'_j  \\
        &\geq c_k-d_k-\delta_{i,k} + \sum_{k \rightarrow j}(a_j-p_{\Qbeta}(j,i))  \\
       &= a_k - \delta_{i,k} - \sum_{k \rightarrow j}p_{\Qbeta}(j,i) = a_k-p_{\Qbeta}(k,i) . 
   \end{align*}
   Thus we get the desired inequalities. If now we furthermore assume that there is at most one oriented path between any two vertices in $Q$, then in the proof above we can use the fact that if $j \in (\Qbeta)_0$ i.e. $a_j>0$ then $a_j-p_{\Qbeta}(j,i) \geq 0$ so in the induction we can plug-in $a'_j = a_j-p_{\Qbeta}(j,i)$ and conclude using the fact that $p_{\Qbeta}(k,i) = \delta_{i,k} + \sum_{k \rightarrow j} p_{\Qbeta}(j,i)$.
   This concludes the proof of the lemma. 
 \end{proof}

 \begin{lem} \label{lem : weight second lemma}
    Let $h$ be a dominant quasi-additive function, let $\beta := \bomega(h)$ and let $i$ be a sink in $\Qbeta$. Then we have $ \bomega(h + h_{\tau^{-1}x_i} - \delta_{x_i}) = \beta - \alpha_i$.
 \end{lem}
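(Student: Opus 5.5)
The plan is to compute the function $\widetilde{h'}$ for $h' := h + h_{\tau^{-1}x_i} - \delta_{x_i}$ explicitly on the section $\{x_k, \tau^{-1}x_k\}$. I will then rerun the recursive definition of $\bomega$ and show, by induction on the total dimension of the projective at $k$ (sinks first), that the coefficients satisfy $a'_k = a_k - \delta_{i,k}$ for every $k \in I$, where $\bomega(h) = \beta = \sum_k a_k \alpha_k$.

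\textbf{Step 1: rewrite $h'$.} By Lemma~\ref{lem : mutation for functions} applied to $x = x_i$, we have $h_{\tau^{-1}x_i} - \delta_{x_i} = \sum_{x_i \rightarrow y} h_y - h_{x_i}$. The arrows of $\ZQ$ leaving $x_i$ are the irreducible maps from $P_i$, and they go to $x_j$ for each arrow $j \rightarrow i$ of $Q$ and to $\tau^{-1}x_k$ for each arrow $i \rightarrow k$ of $Q$. Hence
$$ h' = h - h_{x_i} + \sum_{j \rightarrow i} h_{x_j} + \sum_{i \rightarrow k} h_{\tau^{-1}x_k} . $$
Since $\widetilde{h_y} = \delta_y$, writing $c_k := \widetilde{h}(x_k)$ and $d_k := \widetilde{h}(\tau^{-1}x_k)$ gives
$$ c'_k = c_k - \delta_{i,k} + [k \rightarrow i] \qquad\text{and}\qquad d'_k = d_k + [i \rightarrow k] . $$
Because there are no multiple edges, each bracket is $0$ or $1$.

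\textbf{Step 2: induction.} The claim is that $a'_k = a_k - \delta_{i,k}$, and I check it according to how $k$ sits relative to $i$.
\begin{enumerate}
\item \emph{$k$ with an arrow $i \rightarrow k$.} Here $k \notin (\Qbeta)_0$, since $i$ is a sink in $\Qbeta$, so $a_k = 0$ and hence $c_k - d_k + \sum_{k \rightarrow j} a_j \le 0$. No descendant of $k$ is $i$ or a parent of $i$, by acyclicity, so by induction $a'_j = a_j$ for all $j$ with $k \rightarrow j$. The new quantity is therefore this nonpositive number minus $1$, and so $a'_k = 0 = a_k$.
\item \emph{$k = i$.} Since $i \in (\Qbeta)_0$, we have $a_i = c_i - d_i + \sum_{i \rightarrow j} a_j > 0$. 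The children of $i$ are unchanged by the previous case, so $a'_i = \max(0, a_i - 1) = a_i - 1$.
\item \emph{$k$ with an arrow $k \rightarrow i$.} Now $c'_k = c_k + 1$, while the child $i$ contributes $a'_i = a_i - 1$. All other children are unchanged by induction, so the argument of the $\max$ is unchanged and $a'_k = a_k$.
\item \emph{All remaining $k$.} Both $c_k$ and $d_k$ are unchanged, and the children are unchanged by induction, so $a'_k = a_k$.
\end{enumerate}
Summing over $k$ yields $\bomega(h') = \beta - \alpha_i$.

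\textbf{Main obstacle.} The delicate point is making the induction order and the case split airtight. One must check that a vertex cannot receive competing modifications, for instance by being simultaneously a child of $i$ and an ancestor of $i$, or by having a descendant that is a parent of $i$. Both are excluded by acyclicity and the absence of multiple edges. One must also confirm that the irreducible maps out of $x_i$ in $\ZQ$ are exactly as described in Step 1, using the identification of the section $\{x_i\}$ with $Q^{op}$. I would verify this last point first against Example~\ref{ex : morphisms in HQ}.
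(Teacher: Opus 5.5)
Your proposal is correct and follows the paper's proof essentially step for step. Like the paper, you rewrite $h'$ via Lemma~\ref{lem : mutation for functions} as $h - h_{x_i} + \sum_{k \rightarrow i} h_{x_k} + \sum_{i \rightarrow k} h_{\tau^{-1}x_k}$, read off $c'_k$ and $d'_k$, and prove $a'_k = a_k - \delta_{i,k}$ by induction from the sinks; the non-trivial cases are $i \rightarrow k$ (where $a_k = 0$ forces $a'_k = 0$) and $k \rightarrow i$ (where the $+1$ in $c'_k$ cancels the $-1$ from the child $i$), and your case split handles these somewhat more carefully than the paper's base case, which omits $d'_k$ and the $\max$.
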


 \begin{proof}
 Denote $c_k := \widetilde{h}(x_k)$ and $d_k := \widetilde{h}(\tau^{-1}x_k)$ for every $k \in I$. Let also denote $h' := h + h_{\tau^{-1}x_i} - \delta_{x_i}$.  As $i$ is a sink in $\Qbeta$,  one can check  by a straightforward induction  that  $c_j=d_j=0$ for all $j$ such that $i \rightsquigarrow j$. Thus we have $0<a_i=c_i-d_i$ so that $c_i > d_i \geq 0$. Consequently, we can write $h = h_{x_i} + f$ and thus applying  Lemma~\ref{lem : mutation for functions} we get 
 $$ h' = f + \sum_{k \rightarrow i} h_{x_k} + \sum_{i \rightarrow k} h_{\tau^{-1}x_k}  $$
 so that 
 where 
 $$ c'_k := \widetilde{h'}(x_k) =   c_k - \delta_{i,k} + \delta_{i \leftarrow k} \qquad d'_k := \widetilde{h'}(\tau^{-1}x_k) =    d_k+\delta_{i \rightarrow k}   $$
  as  there is an arrow $x_i \rightarrow x_k$ (resp. $x_i \rightarrow \tau^{-1}x_k$) in $\ZQ$ if $i \leftarrow k$ (resp. $i \rightarrow k$) in $Q$. Note that $h'$ is dominant. We set $\bomega(h') := \sum_k a'_k \alpha_k$ and we prove by induction on $r_Q(k)$ that $a'_k=a_k- \delta_{i,k}$ for all $k \in I$. If $r_Q(k)=0$ i.e. $k$ is a sink in $Q$ then 
  \begin{align*}
      a'_k & =c'_k=c_k-\delta_{i,k} + \delta_{i \leftarrow k} = c_k- \delta_{i,k} = a_k- \delta_{i,k}
  \end{align*}
  as $k$ is a sink in $Q$. If $k \in I$ is such that  the claim holds for all $j$ with $r_Q(j)<r_Q(k)$ then  
\begin{align*}
  a'_k &= \max \left( 0, c_k- d_k   - \delta_{i,k} + \delta_{i \leftarrow k} - \delta_{i \rightarrow k} + \sum_{k \rightarrow j}  (a_j- \delta_{i,j})    \right)   \\
  &= \max \left(  0, c_k-d_k - \delta_{i,k} - \delta_{i \rightarrow k} + \sum_{k \rightarrow j}a_j \right) 
\end{align*}
 so that $a'_k=a_k- \delta_{i,k}$ if there is no arrow from $i$ to $k$ in $Q$. If on the contrary we have $i \rightarrow k$ then in particular $k \notin (\Qbeta)_0$ as $i$ is a sink in $\Qbeta$, and hence necessarily $a_k=0$ which means that $c_k-d_k-\sum_{k \rightarrow j}a_j \leq 0$ and thus we get $a'_k=0$. Thus we have $\bomega(h') = \beta - \alpha_i$ as desired. 
 \end{proof}

  \subsection{Cluster variables of type $A_n$ as Euler characteristics}


\begin{thm} \label{thm : main thm simples}
 Let $\beta \in \Delta_+$ and $h$ be a dominant quasi-additive function such that $\bomega(h) = \beta$. Then there exists a chain complex $C_{\bullet}(h) \in \mathcal{K}^b(\HQ)$ of length $|\beta|$ 
 whose renormalized Euler characteristics coincides with the $F$-polynomial $F[\beta]$. 
 \end{thm}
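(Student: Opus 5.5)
Induction on $|\beta|$, building $C_{\bullet}(h)$ as a shifted mapping cone that mirrors the recursion $F[\beta] = F[\beta - \boldsymbol{\dim}I_i^{\beta}] + \widehat{y_i}F[\beta-\alpha_i]$ of Lemma~\ref{lem : cluster mutation}.

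\textbf{Base case and choice of sink.} For $|\beta| = 0$ we have $\bomega(h)=0$, and we take $C_{\bullet}(h)$ to be the dominant object of $\HQ$ attached to $h$ (cf. Remark~\ref{rk on dominant objects}), concentrated in degree $0$. Its renormalized Euler characteristic is $1 = F[0]$. For $|\beta|>0$ we fix a sink $i$ of $\Qbeta$. Since $h$ is dominant with $\bomega(h)=\beta$, the proof of Lemma~\ref{lem : weight second lemma} shows that $\widetilde{h}(x_i)>0$. We may therefore write $(X,h) \simeq Y(x_i)\otimes (X_f,f)$ and form the Serre tilting $\mu_i$ of the dominant object along the copy of $x_i$ in $Y(x_i)$.

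\textbf{The two auxiliary complexes.} By Proposition~\ref{prop : mutation for objects in CQ}, after tensoring with $Y(\tau^{-1}x_i)$ the tilted object becomes $F(x_i)\otimes(\text{dominant object with function } h' := h+h_{\tau^{-1}x_i}-\delta_{x_i})$. By Lemma~\ref{lem : weight second lemma}, $\bomega(h')=\beta-\alpha_i$, so the induction hypothesis gives a complex $C_{\bullet}(h')$ of length $|\beta|-1$. For the other term we use $h'' := h+h_{\tau^{-1}x_i}$. In type $A_n$ there is at most one path between any two vertices, so Lemma~\ref{lem : weight first lemma} gives $\bomega(h'') = \beta-\boldsymbol{\dim}I_i^{\beta}$. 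Since $\beta - \boldsymbol{\dim}I_i^{\beta}$ is a sum of at most two compatible positive roots (as in the proof of Lemma~\ref{lem : cluster mutation}), we need the induction statement for such sums as well. We therefore strengthen the induction hypothesis to cover dominant $h$ whose weight is a sum of pairwise compatible roots, with Euler characteristic equal to the product of the $F$-polynomials. This is consistent with tensoring complexes, because on a sum of dominant functions $\bomega$ behaves additively in the disjoint-support case.

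\textbf{Gluing and the Euler characteristic.} We then construct a chain map $u_{\bullet}: C_{\bullet}(h'') \to \mu_i\text{-twisted } C_{\bullet}(h')$ whose degree-$0$ component is the morphism $Y(x_i)\to\mu_iY(x_i)$ tensored with identities, as in Example~\ref{ex : morphisms in HQ}. The higher components are built degree by degree, following Lemma~\ref{lem : morphism}, using that the objects involved are images of $\RQ$-complexes under $\mathcal{D}_Q$. We set $C_{\bullet}(h) := \mathrm{Cone}(u_{\bullet}[-1])$ after cancelling the common factor $Y(\tau^{-1}x_i)$, exactly as in the proof of Theorem~\ref{thm : char of standards as Euler char}. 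Its length is $1+(|\beta|-1)=|\beta|$. Its Euler characteristic is $\chi(C_{\bullet}(h''))$ plus the $\mu_i$-shifted $\chi(C_{\bullet}(h'))$. Renormalizing and using Remark~\ref{rk : Serre tiltings vs Chari's action} (a Serre tilting at $x_i$ multiplies by $F_iA_i^{-1}$, with $A_i^{-1}$ corresponding to $\widehat{y_i}$ after specialization) gives $F[\beta-\boldsymbol{\dim}I_i^{\beta}] + \widehat{y_i}F[\beta-\alpha_i]$. This equals $F[\beta]$ by Lemma~\ref{lem : cluster mutation}.

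\textbf{Main obstacle.} The hardest step is making the cone well defined, namely producing the chain map $u_{\bullet}$ with the correct degree-$0$ part and checking that the length does not jump, i.e. that the length-$(|\beta|-\dim I_i^\beta)$ complex sits correctly inside the length-$|\beta|$ one. If a genuine chain map is hard to build, the fallback is to construct $u_{\bullet}$ only up to homotopy in $\mathcal{K}^b(\HQ)$, which suffices for the Euler characteristic statement. A second point to verify is the bookkeeping: the "renormalized" Euler characteristic must use the specialization $F_i:=-1$ so that signs match, as in Theorem~\ref{thm : thm2 intro}.
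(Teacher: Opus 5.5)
Your skeleton is the paper's. You use the same sink $i$ of $\Qbeta$ and the same two auxiliary functions: your $h''=h+h_{\tau^{-1}x_i}$ and $h'=h''-\delta_{x_i}$ are the paper's $h'$ and $g$. The cone and the Euler characteristic bookkeeping are also the same, ending with Lemma~\ref{lem : cluster mutation}. Your strengthening to sums of compatible roots even addresses a point the paper passes over, since $\beta-\boldsymbol{\dim}I_i^{\beta}$ need not be a root.

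\textbf{The gap.} It is exactly the step you flag, and your remedy does not work. Lemma~\ref{lem : morphism} needs the \emph{source} complex to be left exact, and needs a degree-one component that already makes the first square commute. Your source $C_\bullet(h'')$ is itself an iterated cone with no known exactness. Lemma~\ref{lem : mapping cone} only gives left exactness of a cone when the source is left exact and the target is left almost split, and the glued-in complexes are not known to be either. Moreover, exactness is only established in $\RQ$; $\mathcal{D}_Q$ is merely additive and monoidal and tells you nothing about weak cokernels in $\HQ$. Working ``up to homotopy'' does not help, because a morphism in $\mathcal{K}^b(\HQ)$ is still a class of chain maps.

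There is also a bookkeeping problem. The ``$\mu_i$-twisted $C_\bullet(h')$'' is never defined. Your cone starts at $Y(h'')\simeq Y(h)\otimes Y(\tau^{-1}x_i)$, not at $Y(h)$. ``Cancelling $Y(\tau^{-1}x_i)$'' is not an operation on complexes: in the proof of Theorem~\ref{thm : char of standards as Euler char} that cancellation only takes place in the Grothendieck ring.

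\textbf{What the paper does instead.} It works in $\RQ'$ with complexes $C_\bullet(M,h)$ indexed by pairs. Writing $h=\sum_k(c_kh_{x_k}+d_kh_{\tau^{-1}x_k})$, the degree-zero object $M$ records only the $Y(x_k)$-part, i.e. $\mathcal{D}_Q(M)=\bigotimes_k Y(x_k)^{\otimes c_k}$. Consequently $h$ and $h+h_{\tau^{-1}x_i}$ share the same $M$. The second complex $C_\bullet(N,g)$ starts at $N$ with $\mathcal{D}_Q(N)=\mu_i\mathcal{D}_Q(M)$, and nothing has to be cancelled.

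The paper also strengthens the induction. For every $Z\subseteq\bigsqcup_k\{x_k\}^{\sqcup\min(c_k,d_k)}$ it carries a chain map $C_\bullet(M,h)\to C_\bullet(N)[1-\sharp Z]$ into the \emph{standard} complex of Theorem~\ref{thm : main thm for standards}, where $\mathcal{D}_Q(N)=\mu_Z\mathcal{D}_Q(M)$. The gluing map is then the composite $C_\bullet(M,h+h_{\tau^{-1}x_i})\to C_\bullet(N)\to C_\bullet(N,g)$:
\begin{itemize}
\item the first arrow is the inductive datum for $Z=\{x_i\}$;
\item the second comes from Lemma~\ref{lem : morphism} applied with source $C_\bullet(N)$, which \emph{is} purely $\RQ$-exact and almost split. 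This is where exactness legitimately enters.
\end{itemize}
The datum is propagated to the new cone by composing three maps: the canonical map $C_\bullet(M,h)\to C_\bullet(M,h+h_{\tau^{-1}x_i})$, the inductive maps, and the connecting maps $C_\bullet(N')\to C_\bullet(N)[1]$ coming from the cone construction of the standard complexes. $\mathcal{D}_Q$ is applied only at the end.

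\textbf{What survives.} $\chi(\mathrm{Cone}(u_\bullet[-1]))=\chi(C_\bullet)-\chi(D_\bullet)$ regardless of $u_\bullet$. So once the degree-zero bookkeeping is repaired as above, your numerical computation and length count go through even with $u_\bullet=0$. What your argument does not supply is a genuine gluing morphism with the tilting morphism in degree zero, and that is what the paper's auxiliary claim is built to produce.
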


  \begin{proof}
      Let us first outline the strategy of the proof. Recall the chain complex $C_{\bullet}(N)$ given by Theorem~\ref{thm : main thm for standards} for each object $N$ in $\RQ'$.  We begin by establishing the following claim. Let $h := \sum_k (c_k h_{x_k} + d_k h_{\tau^{-1}x_k}$) be a dominant quasi-additive function. Then for every  $M \in \RQ'$ such that $\mathcal{D}_Q(M) = Y(x_1)^{\otimes c_1} \otimes_{\HQ} \cdots \otimes_{\HQ} Y(x_n)^{\otimes c_n}$, there exists a chain complex $C_{\bullet}(M,h)$ such that $C_0(M,h) = M$  and such that for any $Z \subseteq \bigsqcup_k \{x_k\}^{\sqcup \min(c_k,d_k)}$, there is a morphism $C_{\bullet}(M,h) \longrightarrow C_{\bullet}(N)[1-d]$ where $N$ is such that $\mathcal{D}_Q(N) = \mu_Z \mathcal{D}_Q(M)$ and $d := \sharp Z$. 

       Assuming the claim holds,  we will then show that the image of $C_{\bullet}(M,h)$ under the functor $\mathcal{D}_Q$  satisfies all the desired properties.

\bigskip

 Let us first prove the claim. We proceed by induction on both $d(M)$ and $\beta := \bomega(h)$. If $\beta = 0$ then it suffices to define $C_{\bullet}(M,h) := M \rightarrow 0 \rightarrow \cdots $. If $\beta \neq 0$, then let  $i$ be a sink in $\Qbeta$.  Note that as $i$ is a sink in $\Qbeta$, we have $0 < a_i = c_i-d_i$. Then denoting $h'  := h+h_{\tau^{-1}x_i}$, we still have $h' \sim f$ and $\min(c'_i,d'_i) \geq 1$. As $\bomega(h')< \bomega(h)$ by  Lemma~\ref{lem : weight first lemma} the induction assumption with $h'$ and $Z := \{x_i\}$, yields a morphism  $C_{\bullet}(M,h') \longrightarrow C_{\bullet}(N)$ where  as above $N \in \RQ'$ is such that $\mathcal{D}_Q(N) = \mu_i \mathcal{D}_Q(M)$. Note that $d(N) = d(M)-\alpha_i$ so we can apply the induction assumption with $N$ and  $g := h'- \delta_{x_i}$. Indeed, $g  \sim f -\delta_{x_i} $ while by $\mathcal{D}_Q(N) = \mu_iY(f) $.  The induction assumption provides a chain complex $C_{\bullet}(N,g)$ whose degree $0$ object is $N$. Thus Lemma~\ref{lem : morphism}  yields a morphism 
 $C_{\bullet}(N) \longrightarrow C_{\bullet}(N,g)$.  We then construct $C_{\bullet}(M,h)$ as the mapping cone of the composition of the two previous morphisms:
$$  C_{\bullet}(M,h) := \mathrm{Cone} \left(  C_{\bullet}(M,h')[-1]  \enspace \longrightarrow \enspace  C_{\bullet}(N,g)[-1] \right) . $$
 It remains to check that $C_{\bullet}(M,h)$ satisfies the property stated in the claim. Obviously, $C_0(M,h) = C_0(M,h') = M $. 
 Thus, let us take  $Z \subseteq \bigsqcup_k \{x_k\}^{\sqcup \min(c_k,d_k)}$ arbitrary. If $d_i \geq c_i$ then  the induction assumption  gives us a morphism 
 $$ C_{\bullet}(M,h') \longrightarrow C_{\bullet}(N)$$
 so composing with the morphism $C_{\bullet}(M,h) \longrightarrow C_{\bullet}(M,h')$ coming from the definition of $C_{\bullet}(M,h)$ we get the desired conclusion. If on the other hand  $d_i<c_i$  then  the induction assumption applied with $h'$ and $Z' := Z \sqcup \{x_i\}$ gives a morphism 
 $$C_{\bullet}(M,h') \longrightarrow C_{\bullet}(N')[1-(d+1)] = C_{\bullet}(N')[-d] $$
 where $N' \in \RQ'$ is such that $\mathcal{D}_Q(N') = \mu_{Z'}\mathcal{D}_Q(M)$. But looking at the construction of the complexes $C_{\bullet}(M)$ carried out in the previous section, we may find an object $N \in \RQ'$ such that $C_{\bullet}(N)$ is constructed as $C_{\bullet}(N) = \mathrm{Cone}(\cdots \longrightarrow C_{\bullet}(N')[-1])$ with $\mathcal{D}_Q(N') = \mu_i \mathcal{D}_Q(N)$. Thus there is a morphism $C_{\bullet}(N') \longrightarrow C_{\bullet}(N)[1]$ in $\mathcal{K}^b(\RQ')$. All together we have 
 $$ C_{\bullet}(M,h) \longrightarrow C_{\bullet}(M,h') \longrightarrow C_{\bullet}(N')[-d] \longrightarrow C_{\bullet}(N)[-d+1] $$
 with $d := \sharp Z$ and $\mathcal{D}_Q(N) = \mu_Z \mathcal{D}_Q(M)$ given that 
 $$\mu_i \mathcal{D}_Q(N) = \mathcal{D}_Q(N')  = \mu_{Z'} \mathcal{D}_Q(M) = \mu_i \mu_Z \mathcal{D}_Q(M) . $$
 This concludes the proof of the claim.

  \bigskip
 
 We can now finish the proof of Theorem~\ref{thm : main thm simples}.  We set 
 $$ C_{\bullet}(h) := \mathcal{D}_Q(C_{\bullet}(M,h)) $$
 where $M$ is an object in $\RQ'$ chosen as in the claim.  In particular if $\beta = 0$ then $C_{\bullet}(h) := Y(h) \rightarrow 0 \rightarrow \cdots$ so its renormalized Euler characteristics is $1$.
 Now if $\beta \neq 0$  then by construction of $C_{\bullet}(M,h)$ we get a distinguished triangle in $\mathcal{K}^b(\HQ)$:
$$ \mathcal{D_Q}(C_{\bullet}(N,g))[-1] \longrightarrow C_{\bullet}(h) \longrightarrow \mathcal{D}_Q(C_{\bullet}(M,h')) \longrightarrow \mathcal{D_Q}(C_{\bullet}(N,g)) .  $$
  Taking the Euler characteristics we get using the induction assumption:
  $$ \chi(C_{\bullet}(h)) = [\mathcal{D}_Q(M)] F[\beta'] - [\mathcal{D}_Q(N)] F[\beta -\alpha_i] .  $$
 Now recall that  $\mathcal{D}_Q(N) = \mu_i \mathcal{D}_Q(M)$, so by Proposition~\ref{prop : mutation for objects in CQ} (cf. also  Remark~\ref{rk : Serre tiltings vs Chari's action}) we have 
 $$ [\mathcal{D}_Q(N)] = F_i \cdot  A_i^{-1} \cdot [\mathcal{D}_Q(M)]  $$
 which allows to rewrite the above identity as 
 $$ \chi^{ren}(C_{\bullet}(h))_{\mid F_i := -1} = F[\beta']  + A_i^{-1}F[\beta-\alpha_i].  $$
 Thus, up to identifying $A_i^{-1}$ with $\widehat{y}_i$ (which is standard from \cite{HL10,HL16}), we obtain the desired equality, as the relations obtained in Lemma~\ref{lem : cluster mutation} together with $F[0]=1$ determine the $F[\beta]$ uniquely. 
  \end{proof}


\end{document}